\documentclass[12pt,twoside]{amsart}
\usepackage[margin=3cm]{geometry}
\usepackage[colorlinks=false]{hyperref}
\usepackage[english]{babel}
\usepackage{graphicx}
\usepackage{float}
\usepackage{amsmath,amsfonts,amssymb,amsthm}
\usepackage{lipsum}
\usepackage[T1]{fontenc}
\usepackage{fourier}
\usepackage{color}
\usepackage{esint}
\usepackage{caption}
\usepackage{siunitx}
\usepackage{ccicons}
\usepackage{subcaption}
\makeatletter
\def\blfootnote{\xdef\@thefnmark{}\@footnotetext}
\makeatother
\newcommand{\R}{\mathbb{R}}

\newcommand{\Ric}{\operatorname{Ric}}

\newcommand{\Vol}{\operatorname{Vol}}

\newcommand{\hess}{\operatorname{Hess}}

\newcommand{\grad}{\nabla}

\newcommand{\eps}{\varepsilon}
\newcommand{\dd}{\,\mathrm{d}}

\newcommand\ccnote{
    \blfootnote{The research is supported by  NSFC 11721101 and NSFC 12531001.}
}
\allowdisplaybreaks
\usepackage[export]{adjustbox}
\numberwithin{equation}{section}
\usepackage{setspace}
\renewcommand{\le}{\leqslant}
\renewcommand{\leq}{\leqslant}
\renewcommand{\ge}{\geqslant}
\renewcommand{\geq}{\geqslant}
\renewcommand{\mathbb}{\varmathbb}
\usepackage{fancyhdr}
\newtheorem{theorem}{Theorem}[section]
\newtheorem{lemma}[theorem]{Lemma}
\newtheorem{corollary}[theorem]{Corollary}
\newtheorem{proposition}[theorem]{Proposition}

\usepackage{comment}
\usepackage{enumerate}
\usepackage{hyperref}
\usepackage[alphabetic, msc-links, backrefs]{amsrefs}

\usepackage{amsthm}

\allowdisplaybreaks

\input epsf
\def\begfig {
\begin{figure}
\small }
\def\endfig {
\normalsize
\end{figure}
}

\begin{document}
\thispagestyle{empty}

\ccnote

\title[]{Quantitative Rigidity of Ricci Nonnegative Manifolds via the Regular Green Constant}

\author{Zhelei Huang}
\author{Bijia Luo}
\author{Jie Zhou}
\address{Zhelei Huang, School of Mathematical Sciences, Capital Normal University, 105 West Third Ring Road North, Haidian District, Beijing, 100048, China}
\email{2260501005@cnu.edu.cn}

\address{Bijia Luo, School of Mathematical Sciences, Capital Normal University,  105 West Third Ring Road North, Haidian District, Beijing, 100048, China}
\email{2250502153@cnu.edu.cn}

\address{Jie Zhou, School of Mathematical Sciences, Capital Normal University, 105 West Third Ring Road North, Haidian District, Beijing, 100048, China}
\email{zhoujiemath@cnu.edu.cn}
\maketitle

\noindent \textbf{Abstract.} \textit{For a Ricci non-negative non-parabolic manifold $(M^{n\ge3},g)$, assuming  the minimal normalized Green function  admits the asymptotic expansion 
$G=r^{2-n}+H_p+O_1(r)$, we prove the scaling invariant defect $H_pR^{n-2}$  controls the  geometry of the geodesic ball quantitatively, $\frac{1}{R}d_{GH}(B_p(R),B_0^{\mathbb{R}^n}(R))\le C_n \left(H_p R^{n-2}\right)^{\frac{1}{88(n+1)(4n+1)}}$.
}
\vskip0.3cm

\noindent \textbf{Keywords.} Green function, nonnegative Ricci curvature, rigidity, geometric analysis\vspace{0.5cm}

\noindent \textbf{MSC2020.}  53C21, 58J05, 31C12


\tableofcontents

\section{Introduction}
\label{sec:introduction}

Let $(M^n,g)$ be a smooth, complete, non-compact, non-parabolic
Riemannian manifold of dimension $n\ge 3$ satisfying
\[
    \operatorname{Ric}\ge 0.
\]
By the work of Varopoulos\cite{Varopoulos81} and Li-Yau\cite{LiYau1986},  $(M^n,g)$ admits a   unique normalized minimal positive Green function. More precisely, fix a point $p\in M$, set $r(x)=d(p,x)$. Then, there exists a unique positive Green function $G(p,\cdot)$ such that 
\[
    -\Delta G(p,\cdot)=(n-2)\sigma_{n-1}\delta_p,
    \qquad
    \lim_{x \to p}G(p,x)r(x)^{n-2}=1 
    \quad\text{ and }\quad \lim
    _{x\to \infty} G(p,x)=0.
\]
In \cite{Colding2012}, Colding  considered the function 
$$b=G^{\frac{1}{2-n}}$$
and studied the monotonicity of geometric quantities along the level set of $b$. 
 Especially, Colding introduced the area funcitonal 
 $$A(s):=s^{1-n}\int_{\{b=s\}}|\nabla b|^3\dd A$$
 and proved his third monotonicity formula 
 \begin{equation}\label{eq:colding-third-monotonicity}
 s_2^{3-n}A'(s_2) -s_1^{3-n}A'(s_1)=\frac12\int_{\{s_1\leq b\leq s_2\}}D\,b^{2-2n}\dd V,
\end{equation}
where
\[D:=\left|\operatorname{Hess}b^2-\frac{\Delta b^2}{n}g\right|^2+\operatorname{Ric}\bigl(\nabla b^2,\nabla b^2\bigr).
\]
 As  one of the applications of this monotonicity formula, Colding proved\cite[Corollary 2.22]{Colding2012} that any three dimensional non-parabolic manifold with nonnegative Ricci curvature must be $\mathbb{R}^3$ if $|\nabla b|^2$ is $C^2$. To understand this rigidity phenomenon coming from regularity, Huang and Xiong\cite{HuangXiongPole2026} studied the  asymptotic  expansion of the  Green function.  They proved\cite[Theorem 1.2]{HuangXiongPole2026} that under some vanishing conditions(in dimension three, the conditions are empty) of the curvature at $p$, the minimal Green function admits the expansion 
\begin{equation}\label{eq:intro_green_expansion}
    G(p,x)=r^{2-n}+H_p+O_1(r).
\end{equation}
Here $H_p\ge 0$ is a constant  and  $O_1(r)$ is a function which  vanishes at $p$ and has bounded gradient.  Moreover, in odd dimensions, the $C^{n-2}$ regularity of $|\nabla b|^2$ means $H_p=0$, which further  implies  $M^n=\mathbb{R}^n$\cite[Theorem 1.3]{HuangXiongPole2026}.

In the present paper, we continue to consider the quantification of the rigidity theorem. The topic is closely related to quantitative rigidity results in Cheeger-Colding theory. By the volume comparison theorem and its rigidity, the volume difference of a manifold and the comparing model is usually chosen as the defect for a quantitative rigidity theorem. For a manifold with $\mathrm{Ric}\ge n-1$ and almost maximal volume,  Colding \cite{Colding96} proved the manifold is Gromov-Hausdorff close to $\mathbb{S}^n$. For geodesic ball in a  manifold with $\mathrm{Ric}\ge 0$ and almost maximal volume, Colding\cite[Theorem 0.8]{Colding1997} pointed out the geodesic ball is close to the Euclidean ball by similar proof as in \cite{Colding96}. Cheeger\cite[Theorem 9.69]{Cheeger01} gave a detailed proof of\cite[Theorem 0.8]{Colding1997}  by compactness argument using the " almost volume cone implies almost metric cone" theorem  and almost splitting theorem in \cite{CheegerColding1996}.  In our setting, the rigidity comes from the regularity of $|\nabla b|^2$ or the vanishing of the constant $H_p$.   Since the regularity of $|\nabla b|^2$ corresponds to the vanishing of  $H_p$ only in odd dimensions, and the vanishing of $H_p$ implies the rigidity in all dimensions,  we prefer to choose $H_p$ as the quantitative defect and ask how close $(M^n,g)$ is  to $\mathbb{R}^n$ when $H_p$ is sufficiently small. Since $M$ is non-compact, we use the pointed Gromov-Hausdorff distance $\frac{1}{R}d_{\mathrm{GH}}(B_p(R),B_0^{\mathbb{R}^n}(R))$ to measure the distance between  $M$ and $\mathbb{R}^n$.    The following is the main answer. 
\begin{theorem}[Quantitative rigidity]
\label{main}
Let $(M^{n\ge 3},g)$ be a complete non-parabolic manifold with non-negative Ricci curvature and $G(p,\cdot)$ be the normalized minimal Green function at the base point $p$. 
Assume
\[
    G(p,x)=r(x)^{2-n}+H_p+O_1(r(x)).
\]
Then there exists $C_n<\infty$, depending only on $n$, such that for every  
$R>0$, there holds 
\begin{align}\label{quantitative rate}
    \frac1R d_{\mathrm{GH}} \bigl(B_p(R),B^{\mathbb{R}^n}_0(R)\bigr) \le C_n\bigl(H_pR^{n-2}\bigr)^{\alpha_n},
    \qquad \alpha_n=[88(n+1)(4n+1)]^{-1}.
\end{align}
Here $B_p(R)$ and $B^{\mathbb{R}^n}_0(R)$ are balls in $M$ and $\mathbb{R}^n$ with radius $R$ centered at $p$ and $0$ respectively, and  $d_{\mathrm{GH}}\bigl(B_p(R),B^{\mathbb{R}^n}_0(R)\bigr)$ is the pointed Gromov-Hausdorff distance. 
\end{theorem}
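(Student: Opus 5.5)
We first reduce to $R=1$ by scaling: under $g\mapsto \lambda^{-2}g$ the normalized Green function scales as $G\mapsto\lambda^{n-2}G$, so $H_p\mapsto \lambda^{n-2}H_p$ and $H_pR^{n-2}$ is scale invariant. Set $\delta:=H_pR^{n-2}$. We may assume $\delta\le\delta_0(n)$ small, since $d_{\mathrm{GH}}/R\le 2$ trivially. The strategy is to turn smallness of $\delta$ into smallness of the Colding defect $\int D\,b^{2-2n}$ on an annulus in $\{b\le C\}$. From there we get that $b$ is close in $W^{1,2}$ to the distance function $r$, with $\hess b^2$ close to $2g$. Then we apply a quantitative Cheeger--Colding argument that produces an $\epsilon$-GH approximation to $B_0^{\mathbb R^n}(1)$ with explicit polynomial dependence on $\epsilon$.

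\textbf{Step 1: Colding's monotone quantities.} Use the expansion $G=r^{2-n}+H_p+O_1(r)$ to compute the small-$s$ behaviour: $b=r(1+H_pr^{n-2}+O(r^{n-1}))^{1/(2-n)}$. This gives $A(s)\to\sigma_{n-1}$ and, after differentiation, $s^{3-n}A'(s)\to -c_nH_p$ as $s\to0$ (the linear correction in $|\nabla b|$ gives $A(s)\approx \sigma_{n-1}(1-c_nH_ps^{n-2})$). Since $A'\le0$ with $A'=0$ only on cones, and the third monotonicity formula \eqref{eq:colding-third-monotonicity} makes $s^{3-n}A'(s)$ nondecreasing and bounded above by $0$, we get for all $s$
\[
\frac12\int_{\{b\le s\}}D\,b^{2-2n}\dd V\le \lim_{s\to\infty}s^{3-n}A'(s)-\lim_{s\to0}s^{3-n}A'(s)\le c_nH_p .
\]
This is the key integral estimate. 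It also yields $|A(s)-\sigma_{n-1}|\le C\delta s^{n-2}$, and hence, via the coarea formula and the identity $\int_{\{b=s\}}|\nabla b|=\sigma_{n-1}s^{n-1}$ (from the normalization of the flux), an almost-constancy of $|\nabla b|$ in $L^2$ on level sets up to $b\sim1$.

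\textbf{Step 2: from $D$ to almost-Euclidean structure.} On the region $\{\frac14\le b\le 2\}$ we have $b^{2-2n}\sim1$, so $\|\hess b^2-\frac{\Delta b^2}{n}g\|_{L^2}^2\le C\delta$. Also $\Delta b^2=2n|\nabla b|^2$, and $|\nabla b|^2$ is $L^2$-close to $1$ by Step 1 together with Cheng--Yau gradient bounds $|\nabla b|\le C$. Together these give $\|\hess b^2-2g\|_{L^2}\le C\delta^{c}$. Next we compare $b$ with $r$. Li--Yau/Colding estimates give $b\approx r$ up to $O(\delta^c)$ in $C^0$: integrate the $L^2$ bound on $|\nabla b|-1$ along a segment, using the segment inequality to pass from $L^2$ to almost every geodesic. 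The function $u=b^2/2$ is then an almost "Euclidean-squared-distance" function. Following Cheeger--Colding, we build almost splitting functions $x_i$, i.e. harmonic approximations of the differences $u(\cdot)-u(\cdot\text{ shifted})$, or alternatively the functions $\langle\nabla u,\nabla u_q\rangle$ for $n$ almost orthonormal directions. These satisfy $\|\hess x_i\|_{L^2}$ and $\|\langle\nabla x_i,\nabla x_j\rangle-\delta_{ij}\|_{L^1}$ small, each with a polynomial loss.

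\textbf{Step 3: GH approximation.} Apply the segment inequality, losing a power $\epsilon\mapsto\epsilon^{1/(2n+2)}$-type exponent, to convert these integral Hessian bounds into the statement that $\Phi=(x_1,\dots,x_n)$ is an $\epsilon$-GH approximation of $B_p(1)$ onto $B_0(1)$. Surjectivity comes from volume: $\Vol B_p(1)\ge(1-C\delta^c)\omega_n$, which follows from $A(s)\approx\sigma_{n-1}$ and coarea. Tracking the exponents through Steps 1--3 gives some $\alpha_n$; the stated $[88(n+1)(4n+1)]^{-1}$ presumably arises from the successive segment-inequality and cutoff losses.

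The main obstacle is Step 2/3: obtaining a genuinely quantitative (non-compactness) Cheeger--Colding conclusion with explicit exponents. In particular, the passage from $L^2$ Hessian control of $b^2$ to pointwise distance comparisons needs careful use of the segment inequality and of good cutoff functions. A further delicate point is the rigorous computation of $\lim_{s\to0}s^{3-n}A'(s)$ from the $O_1$ expansion, which only controls one derivative.
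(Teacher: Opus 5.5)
\textbf{Verdict: genuine gap in Steps 2--3.} Your Step 1 is essentially the paper's Section~\ref{sec:Global_representation_of_H_p}. Monotonicity of $q(s)=s^{3-n}A'(s)$ settles the limit $q(0^+)=-2(n-1)\sigma_{n-1}H_p$ using only the $C^1$ expansion, and $q\le 0$ gives $\tfrac12\int_{\{b\le s\}}D\,b^{2-2n}\dd V\le 2(n-1)\sigma_{n-1}H_p$.

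\textbf{What fails in Steps 2--3.} What you actually control is that $u=b^2/2$ is an almost conical function \emph{with vertex $p$}, i.e.\ $\hess u\approx g$ in $L^2$. But $r^2/2$ has $\hess=g$ exactly on every metric cone $C(Z)$, so this information alone only makes $B_p(R)$ close to \emph{some} cone.

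To get $n$ almost splitting functions you appeal to differences $u-u_q$ or to $\langle\nabla u,\nabla u_q\rangle$, but no $u_q$ is available. The hypothesis is an expansion of $G(p,\cdot)$ at $p$ only. It says nothing about $G(q,\cdot)$, and for $n\ge 4$ the constant $H_q$ need not even exist.

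Substituting $d(q,\cdot)^2/2$, with the almost maximal volume that $q$ inherits via Bishop--Gromov, does not close the argument either. At scale $\sim R$ the volume-cone defect you can deduce at $q$ is only $O(d(p,q)/R)$, and it is not small at all when $d(p,q)\sim R$. So any almost-cone estimate with exponent $\beta<1$ yields an error of order $R\,(d(p,q)/R)^{\beta}\gg d(p,q)$, which exceeds the separation of the two vertices. The cone-splitting comparison therefore cannot be made at a single scale.

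This ``identify the cone as $\mathbb{R}^n$'' step is what the classical argument does by compactness. Quantitative splitting constructions achieve it only at much smaller scales, as the paper remarks after Theorem~\ref{thm:green-function-gh-rigidity}. Your proposal names it as the main obstacle but supplies no mechanism, so it cannot produce the stated $\alpha_n$.

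Your route to $b\approx r$ in $C^0$ is also problematic. The segment inequality does not apply to radial geodesics from the single point $p$, and $|\nabla b|\approx 1$ does not control $\langle\nabla b,\nabla r\rangle$. The paper instead combines $\int_{B_p(2\rho)}h\dd V\le C_nH_p\rho^n$ with the lower bound $h\ge c_n d\,r^{1-n}$ on $B_x(d/8)$, where $d=r(x)-b(x)$.

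\textbf{The missing idea: no splitting is needed.} The paper proves only an almost-\emph{metric-cone} estimate, but at every scale.
\begin{itemize}
\item Theorem~\ref{thm:quantitative_conical_function} gives conical-function bounds for $F=b^2/2$ with power $\tfrac{1}{2(n+1)}$.
\item Theorem~\ref{thm:three} turns these into $d_{GH}(B_p(\tfrac1{10}),B_o^{C(Z)}(\tfrac1{10}))\le C_n\delta^{1/(44(4n+1))}$, with no non-collapsing or splitting input.
\item Together they give $\Theta_r(p)\le C_n(H_pr^{n-2})^{\alpha_n}$ for all $r$.
\end{itemize}
Since a cone is dilation invariant, $r^{-1}B_p(r)$ and $(er)^{-1}B_p(er)$ are $C\Theta_{er}(p)$-close. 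Along $r_k=Re^{-k}$ the defects $\varepsilon_Re^{-k(n-2)}$ decay geometrically, so the errors sum to $C_n\varepsilon_R^{\alpha_n}$. The Euclidean model then comes from smoothness at $p$: $r_N^{-1}B_p(r_N)\to B_0^{\mathbb{R}^n}(1)$. This telescoping over scales, ending at the tangent space at $p$, replaces your Steps 2--3 entirely.
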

Denote  $T_{p,r}M=(M,p,r^{-2}g)$.  Then,  the pointed Gromov-Hausdorff limit of $T_{p,r}M$ as $r\to 0$ is exactly $(T_pM,0,g_p)\cong(\mathbb{R}^n,0,g_{\mathrm{euc}})$, the tangent space of $M$ at $p$.  Denote the unit geodesic balls of $r^{-2}g$ and  $g_p$   centered at $p$ and $0$ by  
$$B^r_p(1)=\{x\in M| d_{r^{-2}g}(x,p)\le 1\} \quad \text{ and }\quad  B_0(1)=\{v\in T_pM| g_p(v,v)\le 1\}$$
respectively.  Then,  \eqref{quantitative rate} is equivalent to  
\begin{align*}
d_{\mathrm{GH}}\left(B^r_p(1), B_0(1)\right)\le C_n\left(H_pr^{n-2}\right)^{\alpha_n},
\end{align*}
which means the unit ball of $T_{p,r}M$ converges to the unit ball of $T_pM$ at a polynomial rate. By the experience from \cite{Colding2012}\cite{CM14}, to get such a polynomial convergence rate, one usually need to establish a quantitative metric cone estimate
\begin{align}\label{quantitative metric cone}
\Theta_r(p):=\inf_{C(Y)}\frac{1}{r}d_{\mathrm{GH}}(B_p(r),B_o^{C(Y)}(r))\le C_n \left(H_pr^{n-2}\right)^{\alpha_n},
\end{align}
and then adding this estimate in different scales $r=2^{-i}$. 
As mentioned above,  the " almost volume cone implies almost metric cone" theorem \cite{CheegerColding1996}\cite[Theorem 9.45]{Cheeger01} says, $\Theta_r(p)\le \Psi(\delta|n)$  when $B_p(r)$ satisfies the $\delta$-almost volume cone condition $(1-\delta) \mathrm{Vol}(B_p(R))\le \frac{R}{n}\mathrm{Vol}(\partial B_p(R))$.  Here $\Psi(\delta|n)$ is a function satisfying $\lim_{\delta\to 0}\Psi(\delta|n)=0$. The proof consists of two parts.  Firstly,  solve a function  $F$ satisfying the Poisson equation  
\begin{align}\label{Poisson}
\Delta F=n  \quad \text{ in } B_p(r),  \quad \quad  F|_{\partial B_p(r)}=\frac{r^2}{2},
\end{align}
and use  the $\delta$-almost volume cone condition to show $F$ satisfies  the $\delta'$ conical condition 
\begin{align*}
\|F-\frac{r^2}{2}\|_{L^\infty(B_p({(1-\delta')r})}+\fint_{B_p(r)}|\nabla F-r\nabla r|dV+\fint_{B_p(r)}|\mathrm{Hess}F-g|dV\le \delta'. 
\end{align*}
Secondly, argue that the existence of $\delta'$-conical function implies the $\Psi$-almost metric cone property. Following this strategy, to get \eqref{quantitative metric cone}, the key is to improve the $\Psi$-almost metric cone estimate to a quantitative one. Moreover, we also need to replace the almost volume cone  condition by the smallness of the  defect $H_p$ to get an almost conical function.  In our setting, instead of solving the  equation \eqref{Poisson} with Dirichlet boundary, the function $F=\frac{b^2}{2}$ does not depends on the radius $r$ and   satisfies the equation 
$$\Delta F=n|\nabla b|^2 \qquad \text{ and }\qquad F(p)=0,$$
hence is a natural choice. This relates to Colding and Minicozzi's proof of the uniqueness of tangent cone of Einstein manifolds. In \cite{CM14}, Colding and Minicozzi proved that  for Ricci-flat manifolds with Euclidean volume growth, if one tangent cone at infinity has smooth cross section,  denoting
$$b_\infty=\left(\frac{V_M}{\mathrm{Vol(B_0^{\mathbb{R}^n}(1))}}\right)^{\frac{1}{n-2}}, \qquad \text{ where } V_M=\lim_{r\to \infty}r^{-n}\mathrm{Vol}(B_p(r))>0,$$
then for $A_r:=B_p(\frac{4r}{b_{\infty}})\backslash B_p(\frac{r}{b_\infty})$ and $A^{C(Y)}_r:=B_o^{C(Y)}(\frac{4r}{b_\infty})\backslash B_o^{C(Y)}(\frac{r}{b_\infty})$, there holds 
\begin{align*}
\hat{\Theta}_{r}:=\inf_{C(Y)}\left(\frac{r}{b_\infty}\right)^{-1}d_{\mathrm{GH}}\left(A_r, A^{C(Y)}_r\right)
\le C\left[\int_{\frac{r}{2}\le b\le 8r}b^{-n}\left|\mathrm{Hess}_{b^2}-\frac{\Delta b^2}{n}g\right|^2\dd V\right]^{\frac{1}{2}}. 
\end{align*}
For general Ricci non-negative manifolds with Euclidean volume growth, they also sketch a proof of the above estimate for $\frac{1}{2}$ replaced by $\frac{1}{2+\mu}$ for some $\mu=\mu(n)$. For application in our setting, we use a similar version of this quantitative estimate under no a priori volume non-collapsing assumption: If $B_p(4)\subset M$ has non-negative Ricci curvature and  there exists a function $F\in C^2(B_p(4))$ satisfying the almost conical estimates 
$$\fint_{B_p(\frac{3}{2})}|\nabla F-r\nabla r|+|\mathrm{Hess}_F-g|\dd V\le \delta_n\ll 1,$$
then  
\begin{align*}
 \Theta_{\frac{1}{10}}(r)
    \le C_n\left(\fint_{B_p(\frac{3}{2})}|\nabla F-r\nabla r|+|\mathrm{Hess}_F-g|\dd V\right)^{\frac{1}{44(4n+1)}}.
\end{align*}
This follows from from the method of \cite{CheegerColding1996}\cite{Jiang}. See  Theorem \ref{thm:three} for a detailed chasing of the proof. To prove the function $F=\frac{b^2}{2}$ satisfies the quantitative conical estimates, we use two pieces of  global information implicit in the regular Green constant $H_p$.  The one comes from the precise remainder term of the Bishop-Gromov  relative volume comparison(monotonicity) theorem and the remainder term $h:=G-r^{2-n}$ of the Green function comparison  $G\ge r^{2-n}$. Both of the two remainder terms can be represented by a Radon measure corresponding to the remainder term of the  Laplace comparison theorem and they combines to give the quantitative Bishop--Gromov  estimate
\begin{equation}\label{eq:intro-volume-deficit}
    0\le 1-\frac{\Vol(B_p(R))}{\omega_nR^n}
    \le \frac{n}{2(n-1)}H_pR^{n-2}.
\end{equation}
Thus the scaling invariant quantity $\varepsilon_R:=H_pR^{n-2}$
controls the volume defect at scale $R$.  The other one comes from another monotonicity formula. By taking limit in  Colding's third monotonicity formula \eqref{eq:colding-third-monotonicity} and using \eqref{eq:intro_green_expansion}, we prove the global  representation
\begin{equation}\label{eq:intro-global-representation}
    H_p
    =\frac{1}{4(n-1)\sigma_{n-1}}
    \int_M D\,b^{2-2n}\dd V.
\end{equation}
These two formulae means  $H_p$  simultaneously controls the volume defect and the tensor defect $D$ from the cone equation.  Combining this two defects estimate \eqref{eq:intro-volume-deficit} \eqref{eq:intro-global-representation}  replacing the equation  \eqref{Poisson} by the equation
$$\Delta h=-w\le 0 \qquad \text{ and } \qquad h(p)=H_p,$$
when $\varepsilon_{8R}\ll 1$, we prove the quantitative conical estimate of $F$:
\begin{align}\label{eq:intro-conical-function-estimates}
   \left(\frac{1}{R^2}\|F-F_0\|_{L^\infty(B_p(R))}\right)^{\frac{1}{2}}+
    \frac{1}{R} \fint_{B_p(R)}
    |\nabla F-\nabla F_0|\dd V+\left( \fint_{B_p(R)}
    |\operatorname{Hess}F-g|^2\dd V\right)^{\frac{1}{2}}
    \leq
    C_n\varepsilon_{8R}^{\frac{1}{2(n+1)}},
\end{align}
which gives the  quantitative metric cone estimate \eqref{quantitative metric cone}. Summing the estimates in adjacent scales gives Theorem~\ref{main}.

\medskip
\noindent\textbf{Organization of the paper.}
Section~\ref{sec:Preliminary} prepares some preliminaries, including the notations, the remainder term of Bishop-Gromov volume comparison theorem and the Green representation of the  remainder term $h:=G-r^{2-n}\ge 0$ of  the Green function comparison.  Section~\ref{sec:The_Equivalence_of_Curvature_Condition_and_Green_Expansion} provides a sufficient and necessary curvature vanishing condition for the regular Green function asymptotic expansion \eqref{eq:intro_green_expansion} and  derives the explicit volume defect estimate \eqref{eq:intro-volume-deficit}.
Section~\ref{sec:Global_representation_of_H_p}  proves the global representation \eqref{eq:intro-global-representation} of $H_p$.  Section~\ref{sec:epsilon_conical_function}  proves the function $\frac{b^2}{2}$ satisfies the quantitative conical estimates \eqref{eq:intro-conical-function-estimates}.  Section~\ref{sec:Quantitative Metric Cone via conical function} provides a quantitative metric cone construction from the existence of quantitative conical functions.  Section~\ref{sec:Proof of the Main Theorem} proves the main Theorem~\ref{main}. Appendix~\ref{app:auxiliary} contains the auxiliary curvature vanishing lemmas used in Section~\ref{sec:The_Equivalence_of_Curvature_Condition_and_Green_Expansion}.

Thus the regular Green constant $H_p$ is not merely the constant term in the pole expansion.  The scale-covariant defect $H_pR^{n-2}$ controls, with explicit powers, the volume defect, the conical-function error, the metric-cone approximation, and finally the Gromov--Hausdorff distance to Euclidean space.

\textbf{Acknowledgments.} The questions and statements of each proposition are designed by the authors. We discuss with ChatGPT for some of the proofs. The manuscript was written by the authors, who take full responsibility for its contents.

\section{Preliminaries}\label{sec:Preliminary}

In this section, we prepare some notations and basic properties for complete non-compact manifolds $(M^n,g)$ with non-negative Ricci curvature. 
\subsection*{Notations}
Fix a point $p\in M$,  
$\theta\in \mathbb{S}^{n-1}\subset T_pM$ and  $t\in [-\epsilon, \epsilon]$ for $\epsilon$ sufficiently small. 
Denote
$
r=d(p,\cdot)$, $ 
    \gamma_\theta(t)=\exp_p(t\theta)
$
and let 
$
P_t:T_pM\longrightarrow T_{\gamma_\theta(t)}M
$ be the parallel transport along $\gamma_\theta$. 
Define  the radical curvature operator 
$
R_\theta(t):\theta^\perp\longrightarrow\theta^\perp
$
by
\begin{equation*}
    R_\theta(t)X
    =
    P_t^{-1}\!\left(
        \operatorname{Rm}_{\gamma_\theta(t)}
        \bigl(P_tX,\dot{\gamma}_\theta(t)\bigr)
        \dot{\gamma}_\theta(t)
    \right),
    \qquad X\in\theta^\perp.
\end{equation*}
Then, 
$
    \operatorname{tr}R_\theta(t)=
    \operatorname{Ric}(\partial_r,\partial_r)(t,\theta),
$
and
\begin{align}
R_\theta(t)
&=
\sum_{k=0}^{N}
\frac{t^k}{k!}
\bigl(\nabla_\theta^k \operatorname{Rm}\bigr)_p(\,\cdot\,,\theta)\theta
+
O(t^{N+1}),\label{eq:Taylor-R-theta}\\
\operatorname{Ric}(\partial_r,\partial_r)(t,\theta)
&=
\sum_{k=0}^{\ell}
\frac{t^k}{k!}
\bigl(\nabla_\theta^k \operatorname{Ric}\bigr)_p(\theta,\theta)
+
O(t^{\ell+1}),\label{eq:Taylor-Ric-theta}
\end{align}
where $\bigl(\nabla_\theta^k \operatorname{Rm}\bigr)_p(\,\cdot\,,\theta)\theta$ and $\bigl(\nabla_\theta^k \operatorname{Ric}\bigr)_p(\theta,\theta)$  are all smooth on $\theta$. For $v\in\theta^\perp$, let $Y_v(t)$ be the  Jacobi field
along $\gamma_\theta$ satisfying
$
    Y_v(0)=0,
    \nabla_{\dot{\gamma}_\theta}Y_v(0)=v.
$
Define
$$
    J_\theta(t)v:=P_t^{-1}\bigl(Y_v(t)\bigr),\qquad 
K_\theta(t)=t^{-1}J_\theta(t)
\qquad \text{ and }  \qquad J(t,\theta):=\det J_\theta(t).$$ 
Then $K_\theta(0)=\operatorname{Id}$, and for  small $t>0$,  $K_\theta(t)$ is invertible,  the volume element $\dd V$ satisfies 
\begin{align}\label{volume estimate}
\dd V=J(t,\theta)dtd\theta=t^{n-1}j(t,\theta)dtd\theta \qquad \text{ for some  } \qquad 
j(t,\theta)\in [\frac 12, 2],
\end{align}
and the shape operator $S_\theta(t)$ satisfies
\begin{equation*}
    S_\theta(t)
    =
    J_\theta'(t)J_\theta^{-1}(t)
    =
    \frac{1}{t}\operatorname{Id}
    +
    K_\theta'(t)K_\theta^{-1}(t).
\end{equation*}
Define
$
 B_\theta(t):=
    K_\theta'(t)K_\theta^{-1}(t)$ and $     \beta(t,\theta):=\operatorname{tr}B_\theta(t).
$
Then
\begin{equation}
    \Delta r
    =
    \operatorname{tr}S_\theta(t)
    =
    \frac{n-1}{t}+\beta(t,\theta).
    \label{eq:laplacian-r-beta}
\end{equation}
For fixed $\theta$, the above defined functions  $R_\theta(t), \Ric(\partial r, \partial r)(t,\theta), J_\theta(t), K_\theta(t), B_\theta(t), \beta(t,\theta)$  can  all  be extended to  smooth functions  on $[-\epsilon,\epsilon]$\cite[Section 4]{HuangXiongPole2026}.  By \cite[Lemma 4.1, Lemma 4.3]{HuangXiongPole2026}, the  Riccati equation 
\begin{equation}\label{eq:Riccati-B-theta}
B_\theta'(t)
+
\frac{2}{t}B_\theta(t)
+
B_\theta(t)^2
+
R_\theta(t)
=
0.
\end{equation}
implies
\begin{equation}
    \beta(t,\theta)
    =
    -t^{-2}
    \int_0^t
    s^2
    \left(
        |B_\theta(s)|^2
        +
        \operatorname{Ric}(\partial_r,\partial_r)(s, \theta)
    \right)\dd s, \quad |B_\theta(t)|^2
:=
\operatorname{tr}\bigl(B_\theta(t)^2\bigr).
    \label{eq:beta-integral}
\end{equation}
and 
\begin{equation}
    \Delta r
    =
    \frac{n-1}{r}
    -
    \frac{1}{3}\operatorname{Ric}_p(\theta,\theta)\,r
    +
    O(r^2) \quad \mathrm{as} \quad r\to 0.
    \label{eq:laplacian-r-expansion}
\end{equation}

\subsection*{Monotonicity identity}
\begin{lemma}\label{prop:distribution-laplacian}
Let $M$ be a complete Riemannian manifold with
$\operatorname{Ric}\geq0$. Then there exists a  Radon measure $w$ on $M$ such that, in the sense of distributions,
\begin{equation}
    \Delta r^{2-n}
    =
    -(n-2)\sigma_{n-1}\delta_p+w,
    \label{eq:distribution-laplacian}
\end{equation}
and $
w|_{B_p(t)} \in L^1(B_p(t))$ for $ t< \operatorname{inj}_p(M).
$
\end{lemma}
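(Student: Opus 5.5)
The approach is to compute $\Delta r^{2-n}$ directly. Away from the cut locus we use the formula for $\Delta r$, and the cut locus is handled with the standard Laplacian comparison argument in distributional form.

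First, on $M\setminus(\{p\}\cup \mathrm{Cut}(p))$ the function $r$ is smooth, and by \eqref{eq:laplacian-r-beta}
\[
\Delta r^{2-n}=(2-n)r^{1-n}\Delta r+(2-n)(1-n)r^{-n}|\nabla r|^2=(2-n)r^{1-n}\beta .
\]
By \eqref{eq:beta-integral} and $\Ric\ge0$ we have $\beta\le0$. Hence the regular part $w_{reg}:=(n-2)r^{1-n}(-\beta)\ge 0$ is a nonnegative smooth function there. Near $p$, \eqref{eq:laplacian-r-expansion} gives $\beta=O(r)$, so $w_{reg}=O(r^{2-n})$, which is integrable against $\dd V\sim r^{n-1}\dd r\,\dd\theta$. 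For $t<\operatorname{inj}_p(M)$ the ball $B_p(t)$ avoids the cut locus, and $\beta(s,\theta)$ is smooth in $s\in[0,t]$. Therefore $w|_{B_p(t)}=w_{reg}\in L^1(B_p(t))$, which gives the local integrability claim.

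Second, I identify the distribution. Take a test function $\varphi\in C_c^\infty(M)$ and work in polar coordinates $x=\exp_p(t\theta)$, $0<t<c(\theta)$, where $c(\theta)$ is the cut time. Then
\[
\int_M r^{2-n}\Delta\varphi\,\dd V=\int_{\mathbb S^{n-1}}\int_0^{c(\theta)} t^{2-n}\Delta\varphi\, J(t,\theta)\dd t\dd\theta .
\]
I integrate by parts on the star-shaped domain $\{t<c(\theta)\}$, whose boundary has measure zero complement issues handled as in the classical proof of Laplacian comparison (Cheeger–Gromoll/Petersen). Equivalently, I excise a small ball $B_p(\epsilon)$ and an exhaustion of the segment domain. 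This produces three terms:
\begin{enumerate}[(i)]
\item An inner boundary term on $\partial B_p(\epsilon)$. Using $\partial_r r^{2-n}=(2-n)r^{1-n}$ and $J=t^{n-1}j$ with $j\to1$, it tends to $-(n-2)\sigma_{n-1}\varphi(p)$.
\item The interior term $\int \varphi\, w_{reg}\,\dd V$.
\item A contribution from the cut locus. There the radial derivative of $\varphi$ is paired with $t^{2-n}J$, and differentiation of $J$ across the cut point gives a term of the form $\int_{\mathbb S^{n-1}}\bigl(\ldots\bigr)\varphi(\exp_p(c(\theta)\theta))\,d\theta$. The sign of this term is nonnegative: in the barrier/Calabi sense, $r^{2-n}$ has a concave kink along the cut locus because $r$ is a minimum of smooth distance functions, and $s\mapsto s^{2-n}$ is decreasing.
\end{enumerate}

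The cleanest way to make step (iii) rigorous, which I expect to be the main obstacle, is to avoid computing the singular part explicitly. Instead I show that $T:=\Delta r^{2-n}+(n-2)\sigma_{n-1}\delta_p$ is a nonnegative distribution on $M$. A nonnegative distribution is a Radon measure, and that measure is what we call $w$. To prove positivity I proceed in two steps:
\begin{enumerate}[(a)]
\item On $M\setminus\{p\}$, Laplacian comparison $\Delta r\le (n-1)/r$ holds in the distributional sense, which is standard for $\Ric\ge0$. Composing with the decreasing convex function $s\mapsto s^{2-n}$ yields $\Delta r^{2-n}\ge0$ distributionally on $M\setminus\{p\}$. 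This step needs a chain rule for distributional subsolutions. I would justify it via barrier (Calabi) upper bounds plus Greene–Wu smoothing, or directly from the polar-coordinate integration by parts, where $J(t,\theta)$ is nonincreasing relative to $t^{n-1}$ so that $t^{1-n}J$ is nonincreasing.
\item Near $p$, $r^{2-n}$ is smooth away from $p$ and the computation in (i) gives the delta mass exactly. A partition of unity then glues the two regions together.
\end{enumerate}
Combining these, $T\ge0$, hence $T=w$ is a Radon measure, and its restriction to small balls is the $L^1$ function $w_{reg}$ computed above.
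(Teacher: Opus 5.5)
Your proposal is correct and is essentially the paper's proof. A partition of unity separates two regions. Near $p$, excising $B_p(\epsilon)$ produces exactly $-(n-2)\sigma_{n-1}\delta_p$ plus the density $(n-2)r^{1-n}(-\beta)=O(r^{2-n})\in L^1(B_p(t))$. Away from $p$, the distributional Laplacian comparison $\Delta r\le \frac{n-1}{r}$ and the chain rule for the Lipschitz function $r^{2-n}$ give a nonnegative distribution, hence a Radon measure; the paper writes this part as $w=(n-2)r^{1-n}w_1$ with $w_1=\frac{n-1}{r}-\Delta r\ge 0$.

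Two small corrections to your heuristics. It is $r$ that has concave kinks along the cut locus, so $r^{2-n}$ has convex ones. The sign in your step (a) comes from $\psi'<0$ together with the model identity $\psi''(s)+\frac{n-1}{s}\psi'(s)=0$ for $\psi(s)=s^{2-n}$, not from convexity of $\psi$ as such.
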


\begin{proof}
For any test function $\varphi\in C_c^\infty(M)$ and $t\le \mathrm{inj}_p(M)$, by partition of unit, there exists $\phi=\lambda\varphi\in C_c^\infty(B_p(t))$ and $\eta=(1-\lambda)\varphi\in C_c^\infty(M\backslash \{p\})$ such that $\varphi=\phi+\eta$. Thus 
\begin{align*}
\int_{M}\Delta{r}^{2-n} \varphi \dd V:=\int_{M}r^{2-n}\Delta \varphi \dd V=\int_{B_p(t)}r^{2-n}\Delta \phi+\int_{M\backslash \{p\}}r^{2-n}\Delta \eta.
\end{align*}
Let
$
A_\varepsilon
=
B_p(t)\setminus\overline{B_p(\varepsilon)}.
$
Then,
\begin{align*}
\int_{A_\varepsilon}r^{2-n}\Delta\phi\,\dd V
    =
    \int_{A_\varepsilon}
    \phi\,\Delta r^{2-n}\dd V
    \quad
    +
    \int_{S_p(\varepsilon)}
    \left(
        r^{2-n}\partial_\eta\phi
        -
        \phi\,\partial_\eta r^{2-n}
    \right)\dd A.
\end{align*}
On the inner boundary,
\[
\left|
\int_{S_p(\varepsilon)}
r^{2-n}\partial_\eta\phi\dd A
\right|
\leq
C\varepsilon^{2-n}
\operatorname{Area}(S_p(\varepsilon))
=
O(\varepsilon) \to 0, \quad \eps \to 0, 
\]
and
\[
-\int_{S_p(\varepsilon)}
\phi\,\partial_\eta r^{2-n}\dd A
=
(2-n)\varepsilon^{1-n}
\int_{S_p(\varepsilon)}\phi\dd A
\longrightarrow
(2-n)\sigma_{n-1}\phi(p), \quad \eps \to 0.
\]
Moreover, by \eqref{eq:laplacian-r-expansion}, we know $$w_0:=\Delta r^{2-n}|_{B_p(t)\setminus \{0\}}=(n-2)r^{1-n}
\left(
    \frac{n-1}{r}-\Delta r
\right)=O(r^{2-n}).$$
Thus $w_0$ extends to a function in  $L^1(B_p(t))$ and 
\[
\int_{A_\varepsilon}
\phi\,\Delta r^{2-n}\dd V=\int_{A_\varepsilon}\phi w_0\dd V
\longrightarrow
\int_{B_p(t)}\phi w_0\dd V, \quad \eps \to 0.
\]
So, 
\[
\int_{B_p(t)}r^{2-n}\Delta\phi\dd V
=
\int_{B_p(t)}\phi w_0\dd V
+
(2-n)\sigma_{n-1}\phi(p).
\]
For the second term, since $r^{2-n}$ is a Lipschitz function on a neighborhood of $\mathrm{spt}\eta\subset\subset M\backslash\{p\}$, we know 
\begin{align*}
\int r^{2-n}\Delta\eta\dd V=-(2-n)\int \nabla r\cdot \nabla(r^{1-n}\eta)\dd V+(2-n)(1-n)\int r^{-n}\eta\dd V
\end{align*}
Since $\operatorname{Ric}\geq0$, the Laplacian comparison theorem\cite[Theorem 4.6]{Cheeger01} implies,
in the distributional sense,
$$\Delta r\le \frac{n-1}{r}.$$
Since $\frac{n-1}{r}\in L^1_{loc}(M)$, we know  
$w_1:=\frac{n-1}{r}-\Delta r$ is a Radon measure in the sense of distribution and $w_0 \dd V=(n-2)r^{1-n}w_1|_{B_p(t)}\ge 0$. Thus  
\begin{align*}
\int r^{2-n}\Delta\eta\dd V=(2-n)\int \Delta r\cdot r^{1-n}\eta \dd V+(2-n)(1-n)\int r^{-n}\eta\dd V=(n-2)\int \eta r^{1-n}\dd w_1.
\end{align*}
As a result, 
\begin{align*}
\int_{M}\Delta{r}^{2-n} \varphi \dd V=\int_{B_p(t)}\phi w_0\dd V
+
(2-n)\sigma_{n-1}\phi(p)+(n-2)\int \eta r^{1-n} \dd w_1
\end{align*}
Letting $w=\lambda w_0\dd V+(n-2)(1-\lambda) r^{1-n} w_1=(n-2)r^{1-n}w_1$, we know 
\[
\Delta r^{2-n}
=
-(n-2)\sigma_{n-1}\delta_p+w,
\]
$w\ge0$ and  $
w|_{B_p(t)} \in L^1(B_p(t))$ for $t< \operatorname{inj}_p(M).$
\end{proof}
We now follow Leon Simon's technique\cite{Simon2018} to prove the Bishop-Gromov volume comparison theorem with precise remainder term, which will be used to deduce the volume non-collapsing condition from the  defect $H_p$.    
\begin{lemma}\label{monotonicity identity}
   Let $M$ be a complete Riemannian manifold with
$\operatorname{Ric}\geq0$. Then, for any $0<\rho_1<\rho_2<\infty$, there holds 
\begin{align*}
    \frac{V(\rho_2)}{\rho_2^n}-\frac{V(\rho_1)}{\rho_1^n}=-\int_{\rho_1}^{\rho_2}\frac{\int_{B_p(\rho)}rdw_1}{\rho^{n+1}}\dd\rho,
    \end{align*}
where $V(\rho)=\mathrm{Vol}(B_p(\rho))$ and $w_1=\frac{n-1}{r}-\Delta r$ is a Radon measure. 
\end{lemma}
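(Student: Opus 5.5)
Following Simon's approach, the idea is to test the distributional identity $\Delta r = \frac{n-1}{r} - w_1$ against radial cutoff functions of the form $\phi(r)$ with $\phi$ compactly supported, in order to express $V'(\rho)$ in terms of $V(\rho)$ and the measure $w_1$.

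\textbf{Step 1: the test-function identity.} Fix $\rho>0$ and take $\phi_\epsilon(t)$ a smooth nonincreasing function, equal to $1$ on $[0,\rho]$, $0$ on $[\rho+\epsilon,\infty)$, linear-ish in between. Consider the Lipschitz vector field $X=\phi_\epsilon(r)\,r\nabla r=\phi_\epsilon(r)\nabla\frac{r^2}{2}$, which is compactly supported. Since $\Delta\frac{r^2}{2}=1+r\Delta r = n - r w_1$ in the sense of distributions (where $r w_1$ is a nonnegative Radon measure; near $p$ everything is smooth and $r^2/2$ is $C^\infty$), integrating $\operatorname{div}X$ against $1$ gives
\[
0=\int_M \phi_\epsilon'(r)\,r\dd V+\int_M\phi_\epsilon(r)\bigl(n\dd V - r\dd w_1\bigr).
\]
To justify this rigorously, I would write $\int \phi_\epsilon(r)\Delta\frac{r^2}{2} = -\int\langle\nabla\phi_\epsilon(r),\nabla\frac{r^2}{2}\rangle$, which is exactly the definition of the distributional Laplacian tested against the Lipschitz compactly supported function $\phi_\epsilon(r)$. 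This requires extending the distributional inequality from $C_c^\infty$ test functions to Lipschitz ones by mollification, using that $w_1$ is Radon and $\frac{n-1}{r}\in L^1_{loc}$. That $w_1$ is a Radon measure is already established in Lemma~\ref{prop:distribution-laplacian}.

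\textbf{Step 2: letting $\epsilon\to0$.} By the coarea formula ($|\nabla r|=1$ a.e.), $\int\phi_\epsilon'(r)r\dd V=\int_0^\infty\phi_\epsilon'(t)\,t\,V'(t)\dd t$, where $V$ is absolutely continuous with $V'(t)=\mathrm{Area}(\partial B_p(t))$ for a.e.\ $t$. As $\epsilon\to0$ this tends to $-\rho V'(\rho)$ at every Lebesgue point $\rho$ (a.e.\ $\rho$). The remaining terms converge to $nV(\rho)-\int_{B_p(\rho)}r\dd w_1$ by dominated convergence, with the closed-versus-open ball distinction handled by choosing $\rho$ outside the countable set of atoms of $r w_1$ on spheres. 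Hence, for a.e.\ $\rho$,
\[
\rho V'(\rho)=nV(\rho)-\int_{B_p(\rho)}r\dd w_1 .
\]

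\textbf{Step 3: integration.} Then $\frac{d}{d\rho}\frac{V(\rho)}{\rho^n}=\frac{\rho V'-nV}{\rho^{n+1}}=-\frac{\int_{B_p(\rho)}r\dd w_1}{\rho^{n+1}}$ a.e. Since $V/\rho^n$ is locally absolutely continuous on $(0,\infty)$ (because $V$ is locally Lipschitz), integrating from $\rho_1$ to $\rho_2$ yields the claimed identity. The integrand is measurable, being monotone in $\rho$.

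The main obstacle is Step 1's justification: testing the distributional inequality against the non-smooth function $\phi_\epsilon(r)$ across the cut locus, together with the a.e.\ limit in Step 2. Mollifying $\phi_\epsilon(r)$ and using that both sides are continuous under uniform convergence with $L^1$ gradient convergence handles it, because $w_1$ is a locally finite positive measure.
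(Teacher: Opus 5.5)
Your proposal is correct and follows essentially the same route as the paper: you test $\Delta r^2 = 2(n - r\,w_1)$ against radial cutoffs to get $\rho V'(\rho) = nV(\rho) - \int_{B_p(\rho)} r\,\dd w_1$ for a.e.\ $\rho$, then integrate $\frac{d}{d\rho}\bigl(V/\rho^n\bigr)$. The only difference is technical: the paper uses cutoffs $\varphi_\varepsilon(r/\rho)$ to turn the radial derivative into a $\rho$-derivative, integrates in $\rho$ before letting $\varepsilon\to 0$, and then differentiates, whereas you fix $\rho$ and pass to the limit via the coarea formula and Lebesgue points, which is equally valid (and your treatment of the product rule and of possible atoms of $r\,w_1$ on spheres is more careful than the paper's).
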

\begin{proof}
    Suppose $\varphi_\eps: [0,\infty) \to [0,1] \subset \R$ is a smooth function that satisfies 
    \begin{equation*}
        \varphi_\eps(t) = \left\{\begin{matrix}
            0 , \; t \geq 1 + \eps \\
            1 , \; t \leq 1.
        \end{matrix} \right., \qquad \varphi_{\eps}'(t) \leq 0 \text{ for all } t    \end{equation*}  
    Since $\varphi_{\eps}(r(x)/\rho)$ is compact supported Lipschitz function, we know  \begin{equation*}
            \langle \Delta r^{2}, \varphi_\eps \rangle = \int_M r^{2} \Delta \varphi_\eps(\frac{r}{\rho})\dd V= -\int_{M} \grad r^{2} \cdot \grad \varphi_\eps(\frac{r}{\rho})  \dd V= -2\int_{M}  \frac{r}{\rho}\varphi'_\eps(\frac{r}{\rho}) \dd V.
    \end{equation*} 
By
    $
        \varphi'_\eps(\frac{r}{\rho})(\frac{r}{\rho}) = - \rho\frac{d}{d\rho} (\varphi(\frac{r}{\rho})),
    $
    we get
    \begin{equation*}
        \langle \Delta r^{2}, \varphi_\eps \rangle = 2\rho  \frac{d}{d\rho}\int_{M}  (\varphi_\varepsilon(\frac{r}{\rho})) \dd V 
    \end{equation*}
Divide both sides by $2\rho$ and take integration  in $[\rho_0, \rho_1]$, we get 
    \begin{equation*}
    \begin{aligned}
       \int_{\rho_0}^{\rho_1} \frac{1}{2\rho} \langle \Delta r^{2}, \varphi_{\eps} \rangle \dd \rho = \int_M \varphi_\varepsilon(\frac{r}{\rho_1}) - \varphi_\varepsilon(\frac{r}{\rho_0}) \dd V(x)
    \end{aligned}
    \end{equation*}
     Let $\eps \to 0$, and divide both sides by $\rho_1 - \rho_0$,
    \begin{equation*}
        \begin{aligned}
            \frac{1}{2(\rho_1 - \rho_0)}\int_{\rho_0}^{\rho_1}\rho^{-1} \Delta r^{2}(B_p(\rho))\dd \rho&= \frac{V(\rho_1)-V(\rho_0)}{\rho_1 - \rho_0}. 
        \end{aligned}
    \end{equation*}
    Letting $\rho_1 \to \rho_0$, we get 
    \begin{equation*}
        \frac{1}{2\rho}\Delta r^{2}(B_p(\rho)) =  V'(\rho) \quad \text{ for a.e.} \quad \rho>0,
    \end{equation*}
    Since  $w_1=\frac{n-1}{r}-\Delta r$ is a Radon measure, we know $\Delta r=2(n-r w_1)$in the distribution sense, hence 
    \begin{align*}
    V'=\frac{nV(\rho)-\int_{B_\rho}r\dd w_1}{\rho}.
    \end{align*}  This further implies 
    \begin{align*}
    \frac{d}{d\rho}\left(\frac{V}{\rho^n}\right)=-\frac{\int_{B_p(\rho)}r\dd w_1}{\rho^{n+1}},
    \end{align*}
    and 
    \begin{align*}
    \frac{V(\rho_2)}{\rho_2^n}-\frac{V(\rho_1)}{\rho_1^n}=-\int_{\rho_1}^{\rho_2}\frac{\int_{B_p(\rho)}rdw_1}{\rho^{n+1}}\dd \rho.
    \end{align*} 
\end{proof}
\begin{corollary} \label{lem:Riesz_Measure_Monotonicity}
 Let $M$ be a complete Riemannian manifold with
$\operatorname{Ric}\geq0$ and dimension at least three and $w$ be the Radon measure introduced in Lemma \ref{prop:distribution-laplacian}.  Then,  for every $R>0$,
    \begin{equation}\label{eq:volume-riesz-exact}
        1 - \frac{V(R)}{\omega_n R^n} = \frac{1}{(n-2)\omega_n R^n} \int_0^R \rho^{n-1} w (B_p(\rho)) \dd \rho.
    \end{equation}
\end{corollary}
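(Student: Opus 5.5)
We derive the corollary from Lemma~\ref{monotonicity identity} by letting $\rho_1\to 0$, and then convert the $r\,\dd w_1$ integral into a $w$ integral through an integration by parts in $\rho$. The relation $w=(n-2)r^{1-n}w_1$ from the proof of Lemma~\ref{prop:distribution-laplacian} is what links the two measures.

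First we take the limit $\rho_1\to0$. Since $\dd V=t^{n-1}j\,\dd t\,\dd\theta$ with $j\to1$ near $p$, we have $V(\rho_1)/\rho_1^n\to\omega_n$. Lemma~\ref{monotonicity identity} then gives
\[
\omega_n-\frac{V(R)}{R^n}=\int_0^R\frac{\mu(\rho)}{\rho^{n+1}}\dd\rho,\qquad \mu(\rho):=\int_{B_p(\rho)}r\,\dd w_1 .
\]
The integrand is nonnegative, so monotone convergence justifies the limit. Near $p$, $w_1=O(r)$ by \eqref{eq:laplacian-r-expansion}, so $\mu(\rho)=O(\rho^{n+2})$ and the integral converges at $0$.

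Next we rewrite $\mu$ using $w$. On $M\setminus\{p\}$ we have $r\,\dd w_1=\frac{1}{n-2}r^{n}\,\dd w$. The layer-cake (Fubini) formula then gives
\[
\mu(\rho)=\frac{1}{n-2}\int_{B_p(\rho)}r^n\,\dd w=\frac{1}{n-2}\Bigl(\rho^n w(B_p(\rho))-\int_0^\rho n s^{n-1}w(B_p(s))\dd s\Bigr).
\]
This holds because $r^n=\rho^n-\int_r^\rho ns^{n-1}\dd s$. The point $p$ carries no $w$-mass, since $w\in L^1$ near $p$.

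We substitute this into $\int_0^R\mu(\rho)\rho^{-n-1}\dd\rho$ and apply Fubini again. Rather than expanding, it is cleaner to swap the order of integration directly:
\[
\int_0^R\rho^{-n-1}\!\int_{B_p(\rho)}r^n\,\dd w\,\dd\rho=\int_{B_p(R)}r^n\,\frac{r^{-n}-R^{-n}}{n}\,\dd w=\frac1n\int_{B_p(R)}\Bigl(1-\frac{r^n}{R^n}\Bigr)\dd w .
\]
On the other side, the same layer-cake identity gives
\[
\frac{n}{R^n}\int_0^R\rho^{n-1}w(B_p(\rho))\dd\rho=\int_{B_p(R)}\Bigl(1-\frac{r^n}{R^n}\Bigr)\dd w .
\]
Combining the two displays yields
\[
\omega_n-\frac{V(R)}{R^n}=\frac{1}{(n-2)R^n}\int_0^R\rho^{n-1}w(B_p(\rho))\dd\rho .
\]
Dividing by $\omega_n$ gives \eqref{eq:volume-riesz-exact}.

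The only delicate points are the two Fubini swaps and the boundary behaviour. Every integrand is nonnegative, because $w\ge0$, so Tonelli applies. For sphere-boundary effects we use that $\rho\mapsto w(B_p(\rho))$ is monotone, and that the identities hold for a.e.\ $\rho$, which suffices inside integrals.
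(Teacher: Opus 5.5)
Your proof is correct, but it follows a different route from the paper's.

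The paper does not use Lemma~\ref{monotonicity identity} at all. It reruns the cutoff argument with $r^{2-n}$ in place of $r^2$, pairing $\Delta r^{2-n}$ against $\varphi_\varepsilon(r/\rho)$. That test function is identically $1$ near $p$, so the Dirac mass is picked up. This gives the a.e.\ identity $V'(\rho)=\sigma_{n-1}\rho^{n-1}-\frac{\rho^{n-1}}{n-2}\,w(B_p(\rho))$, which the paper integrates over $[0,R]$.

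You instead take Lemma~\ref{monotonicity identity} as input, let $\rho_1\to0$, rewrite $r\,\dd w_1=\frac{1}{n-2}r^n\,\dd w$, and finish with two Tonelli swaps. I checked that $\int_0^R\rho^{-n-1}\int_{B_p(\rho)}r^n\,\dd w\,\dd\rho=\frac1n\int_{B_p(R)}(1-r^nR^{-n})\,\dd w=\frac{1}{R^n}\int_0^R\rho^{n-1}w(B_p(\rho))\,\dd\rho$, so the constants match.

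The comparison:
\begin{itemize}
\item The paper's route is more direct. It also gives the stronger sphere-by-sphere formula for $V'$, with the Euclidean term $\sigma_{n-1}\rho^{n-1}$ coming straight from the coefficient of $\delta_p$.
\item Your route avoids redoing the cutoff computation. It also shows that the corollary is just an integration-by-parts reformulation of Lemma~\ref{monotonicity identity}.
\item The price is that you need the identification $w=(n-2)r^{1-n}w_1$. This is stated only inside the proof of Lemma~\ref{prop:distribution-laplacian}, though it is forced by uniqueness of $w$.
\item You also need $w(\{p\})=0$, which you rightly flag. The factor $r^n$ kills any mass at $p$ on one side of your computation but not on the other, so this fact is genuinely required.
\end{itemize}

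Two minor points. Your intermediate layer-cake formula for $\mu(\rho)$ is never used. The bound $\mu(\rho)=O(\rho^{n+2})$ is unnecessary, since finiteness of the limit already follows from monotone convergence and the finite left-hand side.
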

\begin{proof}
 In a neighborhood of  $\operatorname{spt}(\grad \varphi_{\eps}(r(x)/\rho)) \subset B_p((1+\eps)\rho)\backslash B_p(\rho)$,  $r^{2-n}$ is  Lipschitz   with  $\grad r^{2-n} = (2-n)r^{1-n} \grad r $. By similar calculation as in the proof of Lemma \ref{monotonicity identity}, we have 
 \begin{equation*}
        \langle \Delta r^{2-n}, \varphi_\eps \rangle = (2-n)\int_{M} \rho r^{-n} \frac{d}{d\rho} (\varphi_\varepsilon(\frac{r}{\rho})) \dd V. 
    \end{equation*}
Notice that $\varphi'(r/\rho)$ is nonzero only when $ 1 \leq r\rho^{-1}\leq 1+\eps $. Thus 
    \begin{align*}
        (1+\eps)^{-n} \frac{d}{d\rho} \int_{M} (\varphi_\varepsilon(\frac{r}{\rho})) \dd V \leq \frac{1}{2-n}\rho^{n-1} \langle \Delta r^{2-n}, \varphi_\eps \rangle \leq \frac{d}{d\rho} \int_{M} (\varphi_\varepsilon(\frac{r}{\rho})) \dd V. 
    \end{align*}
    Integrating on   $[\rho_0, \rho_1]$, letting $\varepsilon\to 0$ and then dividing both sides by $\rho_1-\rho_0$, we get 
    \begin{equation*}
        \begin{aligned}
            \frac{1}{(2-n)(\rho_1 - \rho_0)}\int_{\rho_0}^{\rho_1}\rho^{n-1} \Delta r^{2-n}(B_p(\rho))\dd \rho= \frac{V(\rho_1) - V(\rho_0)}{\rho_1-\rho_0 }.    
        \end{aligned}
    \end{equation*}
    Let $\rho_1 \to \rho_0$ and apply \eqref{eq:distribution-laplacian},  we get 
    \begin{equation*}
       V'(\rho)= \frac{\rho^{n-1}}{2-n}\Delta r^{2-n}(B_p(\rho))=\frac{\rho^{n-1}w(B_p(\rho))}{2-n}+\sigma_{n-1}\rho^{n-1},
    \end{equation*}
   Integrating on $[0,R]$, we get  
   \begin{equation*}
    \omega_nR^n- V(R)
    =\frac1{n-2}\int_0^R \rho^{n-1} w (B_p(\rho)) \dd \rho,
    \end{equation*}
    which implies \eqref{eq:volume-riesz-exact}
\end{proof}
\subsection*{Remainder term of Green's function comparison}
Now, assume  additional  $(M^{n\ge 3},g)$ is  non-parabolic. Let $G_p=G(p,\cdot)$ be the normalized minimal positive Green function. By the maximal principle(see \cite[Lemma 2.15]{Colding2012} for example), there holds 
$G_p\ge r^{2-n}.$
The following lemma gives a representation  of the error 
$$h(x):=G_p(x)-r^{2-n}.$$
\begin{lemma}\label{prop:green-riesz-representation}
Let  $M^{n\ge 3}$ be a non-parabolic manifold with $\mathrm{Ric}\ge 0$ and $h$ defined above. Then,
\begin{equation}
    h(x)
    =
    \frac{1}{(n-2)\sigma_{n-1}}
    \int_M G(x,y)\dd w(y),\forall x\in M\backslash \{p\},
    \label{eq:green-riesz-global}
\end{equation}
where $w$ is the  Radon measure in Lemma~\ref{prop:distribution-laplacian}.
\end{lemma}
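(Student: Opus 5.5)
The plan is to view $h$ as a nonnegative function that is superharmonic away from $p$, with distributional Laplacian $-w$, and then to apply the Riesz decomposition against the minimal Green function. First, combine $-\Delta G_p=(n-2)\sigma_{n-1}\delta_p$ with Lemma~\ref{prop:distribution-laplacian}. The Dirac masses cancel and we get, in the sense of distributions on all of $M$,
\[
\Delta h=\Delta G_p-\Delta r^{2-n}=-w .
\]
Near $p$, $w$ is an $L^1$ function of size $O(r^{2-n})$. Hence $h$ extends across $p$ as a superharmonic function, and the Green potential of $w$ is finite there. Define
\[
u(x):=\frac{1}{(n-2)\sigma_{n-1}}\int_M G(x,y)\dd w(y).
\]
Since $-\Delta_x G(x,y)=(n-2)\sigma_{n-1}\delta_y$, we have $-\Delta u=w$ distributionally, provided $u\not\equiv\infty$. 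Then $v:=h-u$ is harmonic, and the goal reduces to showing $v\equiv0$.

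Second, I would carry out this reduction by exhaustion, which avoids a priori finiteness issues. Take a smooth exhaustion $\Omega_k\nearrow M$ with $p\in\Omega_k$. Let $G_k$ be the Dirichlet Green functions of $\Omega_k$, with the same normalization; these increase to $G$ by Li--Yau minimality. Green's representation formula on $\Omega_k$ applied to $h$ (mollified, or directly, since $\Delta h=-w$ is a measure) gives
\[
h(x)=\frac{1}{(n-2)\sigma_{n-1}}\int_{\Omega_k}G_k(x,y)\dd w(y)+\mathcal{H}_k(x),
\]
where $\mathcal{H}_k$ is the harmonic function on $\Omega_k$ with boundary values $h|_{\partial\Omega_k}$. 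By monotone convergence the first term increases to $u(x)$, so in particular $u\le h<\infty$ off $p$. It remains to show $\mathcal{H}_k(x)\to0$.

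Third, bound $\mathcal{H}_k$ from above. Since $0\le h\le G_p$ on $\partial\Omega_k$, the maximum principle gives $0\le\mathcal{H}_k\le \mathcal{G}_k$, where $\mathcal{G}_k$ is the harmonic extension of $G_p|_{\partial\Omega_k}$. But $G_p-G_k(p,\cdot)=\mathcal{G}_k$ exactly, because $G_k(p,\cdot)$ vanishes on $\partial\Omega_k$ and has the same singularity. Since $G_k(p,\cdot)\nearrow G_p$, it follows that $\mathcal{G}_k\to0$ pointwise, hence $\mathcal{H}_k\to0$. This yields \eqref{eq:green-riesz-global} for $x\neq p$.

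The main obstacle I anticipate is justifying Green's representation on $\Omega_k$ for $h$, which is only superharmonic with a measure Laplacian and is singular-free but not smooth. The step where the point mass at $p$ has already cancelled needs care. I would handle it by applying the formula to $G_p$ and to $r^{2-n}$ separately on $\Omega_k\setminus B_p(\varepsilon)$ (for $r^{2-n}$, using the distributional identity with $\Delta r^{2-n}=-(n-2)\sigma_{n-1}\delta_p+w$ tested against $G_k(x,\cdot)$ cut off smoothly), and then letting $\varepsilon\to0$. The boundary terms at $p$ cancel by the common asymptotics $r^{2-n}$. Choosing $\partial\Omega_k$ to avoid atoms of $w$ takes care of the remaining subtleties.
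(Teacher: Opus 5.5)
Your argument is correct, but it finishes differently from the paper.

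\textbf{The paper's route.} It proves only the one-sided bound $u_j\le h$ on $\Omega_j$, by the maximum principle. It then passes to the limit to get $u\le h$, and obtains $-\Delta u=w$ distributionally by dominated convergence. This makes $h-u$ a nonnegative harmonic function on all of $M$. The paper kills it using $0\le h-u\le h\le G_p$ together with the Li--Yau decay $G_p(x)\to0$ as $x\to\infty$.

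\textbf{Your route.} You use the exact decomposition $h=u_k+\mathcal H_k$ on each $\Omega_k$ and the comparison $0\le\mathcal H_k\le G_p-G_k(p,\cdot)$. Thus $\mathcal H_k\to0$ follows from the monotone convergence $G_k\nearrow G$ alone; this is the classical fact that a Green potential has zero greatest harmonic minorant.

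\textbf{What each buys.} Your route never needs the behaviour of $G_p$ at infinity, so no Li--Yau estimate and nothing beyond minimality of $G$. It also avoids the distributional computation of $\Delta u$ and Weyl's lemma on $M$. The cost is that you need equality, not just inequality, in the representation on $\Omega_k$.

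\textbf{Justifying that equality.} It is cleanest to get it from the Riesz decomposition rather than by splitting $G_p$ and $r^{2-n}$. In the splitting approach the delicate point is really the singularity of $G_k(x,\cdot)$ at $x$, where $w$ may concentrate on the cut locus, not the point $p$. Instead, note that $h$ is a nonnegative superharmonic function on $\Omega_k$ with Riesz measure $w|_{\Omega_k}$. Hence $h=u_k+\mathcal M_k$, where $\mathcal M_k$ is its greatest harmonic minorant. Since $p\in\Omega_k$, $h$ is continuous on a neighbourhood of $\partial\Omega_k$. Perron's method then shows $h$ lies in the upper class and $\mathcal M_k$ in the lower class for the boundary data $h|_{\partial\Omega_k}$, so $\mathcal M_k=\mathcal H_k$.

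\textbf{Two inaccurate side remarks.} Neither is used in your argument.
\begin{enumerate}
\item The Green potential of $w$ at $p$ is in general infinite when $n\ge4$ and $\operatorname{Ric}_p\ne0$. Indeed $w\approx\frac{n-2}{3}\operatorname{Ric}_p(\theta,\theta)r^{2-n}$ and $G(p,\cdot)\approx r^{2-n}$, which gives $\int_0 r^{3-n}\dd r$. This divergence is exactly what Proposition~\ref{prop:curvature-jet-vanishing} exploits.
\item Avoiding atoms of $w$ is unnecessary. The integral is over the open set $\Omega_k$, and $w$ has no atoms off $p$ because $h$ is finite and continuous there.
\end{enumerate}
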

\begin{proof}
Let
$
    \Omega_1 \subset \subset \Omega_2 \subset \subset \cdots \subset \subset M,
$
be a smooth compact exhaustion of $M$ and  $G_{\Omega_j}(x,y)$ be  the Dirichlet Green function on
$\Omega_j$. That is, 
\[
    -\Delta_y G_{\Omega_j}(x,y)
    =
    c_n\delta_x,
    \qquad
    c_n=(n-2)\sigma_{n-1},
\qquad 
    G_{\Omega_j}(x,\cdot)=0
    \text{ on }\partial\Omega_j.
\]
Then, by \cite{LiTam1987},  for every fixed $x\neq y$ and all  $j\gg 1$,
$
    G_{\Omega_j}(x,y)
    \nearrow
    G(x,y).
$
 Let 
    \begin{equation*}
        u_j(x) := \frac{1}{(n-2)\sigma_{n-1}} \int_{\Omega_j} G_{\Omega_j}(x,y) \dd w(y) .
    \end{equation*}
 By the symmetry of the Green function, we know 
  $ -\Delta u_j = w|_{\Omega_j}  $ and $ 
            u_j |_{\partial \Omega_j} = 0.
$
    Thus 
    \begin{equation*}
            -\Delta (h -u_j) = 0 , \quad \text{in } \Omega_j \qquad \text{ and } \quad 
            h-u_j \geq 0, \quad \text{on } \partial \Omega_j.
    \end{equation*}
    Since $u_j$, $h \in L^1_{\mathrm{loc}}(M)$, by Weyl lemma and maximum principle, $h\ge u_j \geq 0$ on $\Omega_j$.
    Since
$
    G_k(\cdot,x) \nearrow G(\cdot,x),
$
we have $u_k$ converge monotonically to some $u\ge 0$.
By the monotone convergence theorem,
\begin{equation*}
    0\le u(x)
    =
    \frac{1}{(n-2)\sigma_{n-1}}
    \int_M G(x,y) \dd w(y) 
    =
    \lim_{k\to\infty}u_k(x)\le h(x).
\end{equation*}
Since $h \in L_{\mathrm{loc}}^1(M)$, we know  $u\in L_{\mathrm{loc}}^1(M)$. For any  $\varphi\in C_c^\infty(M)$ and $j\gg 1$, there holds 
$\operatorname{supp}\varphi\subset \subset \Omega_j$. So, 
$
    |u_j(-\Delta\varphi)|
    \leq
    h|\Delta\varphi| \in L^1(M).
$
Thus by the dominated convergence theorem and Fubini's theorem, we know

\begin{equation*}
    \int_M u(-\Delta\varphi)\dd V
    =\lim_{k\to \infty}\int_M u_j(-\Delta \varphi) \dd V=\lim_{k\to\infty}\int_{\Omega_k}\varphi(y)\dd w=
    \int_M \varphi\dd w.
\end{equation*}  
Thus
$
    -\Delta u=w=-\Delta h
$
in the distributional sense. It follows that $h-u$
is a nonnegative harmonic function on $M$ in the distributional sense. By Weyl's lemma, $h-u$ is smooth.
By the Green function estimates of Li and Yau\cite[Theorem 5.2]{LiYau1986}
(see also \cite[Theorem 3.1]{LiTam1995}
for a purely elliptic proof), we have
$
    G(p,x)\to0
    \text{ as } x\to\infty.
$
Noting, 
\[
    0\leq h(x)
    =
    G(p,x)-r(x)^{2-n}
    \leq
    G(p,x),
\]
we have
$
    h(x)\to 0
    $,  hence $(h-u)(x)\to 0 $  as  $x\to\infty$. 
By the maximal principle,
$
    h-u\equiv0.
$
Thus 
\[
    h(x)=u(x)=\frac{1}{(n-2)\sigma_{n-1}}
    \int_M G(x,y) \dd w(y) 
\].

\end{proof}

\section{The Equivalence of Curvature Condition and volume defect estimate via $H_p$}\label{sec:The_Equivalence_of_Curvature_Condition_and_Green_Expansion}
In \cite{HuangXiongPole2026}, it's proved some   curvature vanishing conditions are sufficient  for the expansion of the minimal  Green function. 
\begin{proposition}[Huang--Xiong \cite{HuangXiongPole2026}]
\label{prop:huang-xiong-pole-expansion} Let $(M^{n\ge 3},g)$ be a non-parabolic manifold and $p\in M$. Assume  
 $\operatorname{Ric}\geq0$  in a domain $\Omega$ containing $ p$. Suppose that
\begin{equation}
\begin{aligned}
    \nabla^k\operatorname{Ric}(p)&=0,
    \qquad
    &&0\leq k\leq n-4,
    \\
    \nabla^l\operatorname{Rm}(p)
    &=
    0,
    &&0\leq \ell\leq
\left\lfloor\frac{n-6}{2}\right\rfloor.
\end{aligned}
\label{eq:huang-xiong-jet-conditions}
\end{equation}
Then for any local solution $u$ of $-\Delta u=(n-2)\sigma_{n-1}\delta_p$ in $\Omega$, there holds  

\begin{equation*}
    u(x) =r^{2-n}+H+O_1(r),
\end{equation*}
where $H$ is a constant depending on $u$ and $O_1(r)$ is a Lipschitz function which vanishes at $p$.  Moreover, if $M$ is non-parabolic, then the minimal normalized Green function $G_p$ satisfies \eqref{eq:intro_green_expansion}
and $H_p\ge 0$. 
We call  $H_p$ the regular Green constant at $p$. 
\end{proposition}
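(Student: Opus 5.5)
The plan is to reduce the statement to a local analysis of the Poisson equation $-\Delta u=(n-2)\sigma_{n-1}\delta_p$ near $p$. I compare $u$ with $r^{2-n}$, or rather with a corrected parametrix, and show that the difference extends to a Lipschitz function.

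\textbf{Step 1: the parametrix and its error.} Work in a ball $B_p(t)\subset\Omega$ with $t<\operatorname{inj}_p(M)$. By Lemma~\ref{prop:distribution-laplacian},
\[
\Delta r^{2-n}=-(n-2)\sigma_{n-1}\delta_p+w_0,\qquad w_0=(n-2)r^{1-n}\Bigl(\frac{n-1}{r}-\Delta r\Bigr)=-(n-2)r^{1-n}\beta .
\]
The function $v:=u-r^{2-n}$ then satisfies $-\Delta v=w_0$ in the distributional sense, so by Weyl's lemma $v$ differs from a Newtonian-type potential of $w_0$ by a smooth harmonic function.

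\textbf{Step 2: expanding $\beta$.} Use \eqref{eq:beta-integral} together with the Taylor expansions \eqref{eq:Taylor-R-theta} and \eqref{eq:Taylor-Ric-theta}, and the Riccati equation \eqref{eq:Riccati-B-theta}.
\begin{itemize}
\item The jet conditions $\nabla^k\operatorname{Rm}(p)=0$ for $k\le\lfloor (n-6)/2\rfloor$ force $B_\theta(s)=O(s^{\lfloor (n-6)/2\rfloor+2})$, by solving the Riccati equation order by order. Hence $|B_\theta|^2=O(s^{n-2})$ or better.
\item The conditions $\nabla^k\operatorname{Ric}(p)=0$ for $k\le n-4$ give $\operatorname{Ric}(\partial_r,\partial_r)=O(s^{n-3})$.
\end{itemize}
Plugging both into \eqref{eq:beta-integral} gives $\beta=O(r^{n-2})$, but this is one power short. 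Writing out the leading term: $w_0=a(\theta)r^{-1}\cdot r^{\,0}+O(1)$, where $a(\theta)$ is built from $(\nabla^{n-3}\operatorname{Ric})_p(\theta,\dots,\theta)$, a homogeneous polynomial in $\theta$ of degree $n-1$. I expect it to vanish, or at least to contribute nothing singular. The precise claim I aim for is
\[
w_0=r^{-1}P(\theta)+O(1),
\]
with $P$ a polynomial restricted to the sphere whose Newtonian potential is $r\,Q(\theta)$. That potential is Lipschitz, vanishes at $p$, and is absorbed into $O_1(r)$.

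\textbf{Step 3: regularity of the potential.} Split $w_0=w_0^{\rm sing}+w_0^{\rm reg}$ with $w_0^{\rm reg}\in L^\infty$.
\begin{itemize}
\item For $w_0^{\rm reg}$, use local $W^{2,q}$ estimates for all $q<\infty$; the solution is $C^{1,\alpha}$, hence Lipschitz.
\item For $w_0^{\rm sing}=r^{-1}P(\theta)$, solve explicitly in geodesic polar coordinates by the ansatz $rQ(\theta)$. The metric correction is $\Delta-\Delta_{\rm euc}=O(r)\partial^2+O(1)\partial$, which produces an $O(1)$ error that goes back into $w_0^{\rm reg}$.
\end{itemize}
Thus $v=H+O_1(r)$, where $H$ is the value at $p$ of the continuous function $v$.

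\textbf{Step 4: the global statement.} For the minimal Green function $G_p$ on a non-parabolic $M$ with $\operatorname{Ric}\ge0$ globally, Lemma~\ref{prop:green-riesz-representation} gives $h=G_p-r^{2-n}\ge0$. Therefore $H_p=\lim_{x\to p}h(x)\ge0$.

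\textbf{Main obstacle.} The hardest step is the exact-order bookkeeping in Step 2. One must track the Riccati recursion at the threshold order, where the borderline term is of size $r^{-1}$ in $w_0$. One must then show that its angular structure makes the corresponding potential Lipschitz rather than producing a $\log r$ term or a non-Lipschitz piece. I would first try the direct Taylor recursion for $B_\theta$, combined with the fact that $r^{k}\times$(a spherical polynomial of degree $\le k+2$) has a potential with no logarithm unless a resonance occurs. I would then check that resonances are excluded precisely by the parity-dependent bound $\lfloor (n-6)/2\rfloor$.
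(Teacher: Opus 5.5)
The paper gives no proof of this proposition; it imports it from Huang--Xiong, so there is no in-paper argument to compare with. On its own merits, your setup is the right one: the parametrix $r^{2-n}$, the equation $-\Delta v=w_0=-(n-2)r^{1-n}\beta$, a borderline $r^{-1}$ singularity, and the danger that a degree-one spherical harmonic in it produces $r\log r\,Y_1(\theta)$ and destroys the Lipschitz bound. But the step that decides the matter is left open, and the bookkeeping leading to it is wrong in odd dimensions. With $L=\lfloor (n-6)/2\rfloor$, the Riccati equation gives $B_\theta=O(s^{L+2})$, hence $|B_\theta|^2=O(s^{2L+4})$. This is $O(s^{n-2})$ for $n$ even, but only $O(s^{n-3})$ for odd $n\ge5$, where it feeds into $\beta$ exactly at order $t^{n-2}$. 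So the $t^{n-2}$-coefficient $a_{n-2}(\theta)$ of $\beta$ is not built from $\nabla^{n-3}\operatorname{Ric}$ alone, and it does not vanish in general. For $n=5$ one finds $a_3(\theta)=-\tfrac1{45}|\operatorname{Rm}_p(\cdot,\theta)\theta|^2-\tfrac1{10}(\nabla^2_{\theta\theta}\operatorname{Ric})_p(\theta,\theta)$, which is nonzero as soon as $\operatorname{Rm}_p\neq0$. Thus the singular piece $rQ(\theta)$ is genuinely present and everything hinges on non-resonance. Your suggestion that the $\operatorname{Rm}$-jet bound excludes the resonance is not right: that bound only ensures $w_0=O(r^{-1})$.

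The missing ingredient is a parity argument. Since $\gamma_{-\theta}(t)=\gamma_\theta(-t)$, one checks $K_{-\theta}(t)=K_\theta(-t)$. Hence $B_{-\theta}(t)=-B_\theta(-t)$ and $\beta(t,-\theta)=-\beta(-t,\theta)$, and therefore $a_{n-2}(-\theta)=(-1)^{n-1}a_{n-2}(\theta)$.

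If $n$ is even, $a_{n-2}$ is odd, and it is also $\le0$ because $\beta\le0$ by \eqref{eq:beta-integral}. So $a_{n-2}\equiv0$, $w_0\in L^\infty$, and there is no singular term at all. Equivalently, the positive-semidefinite-and-odd argument in the proof of Proposition~\ref{prop:ricci-radial-jet} kills $\nabla^{n-3}\operatorname{Ric}(p)$, while $|B_\theta|^2=O(s^{n-2})$.

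If $n$ is odd, $a_{n-2}$ is even, hence $L^2$-orthogonal to the kernel of $\Delta_{\mathbb S^{n-1}}+(n-1)$. That kernel is exactly the space of degree-one harmonics, since $k(k+n-2)=n-1$ forces $k=1$. So $(\Delta_{\mathbb S^{n-1}}+(n-1))Q=(n-2)a_{n-2}$ has a smooth solution, $rQ(\theta)$ absorbs the singular part of $w_0$ up to a bounded error, and your Step 3 then goes through.

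Without this input the proposal does not show that $u-r^{2-n}$ is Lipschitz near $p$. Step 4, deducing $H_p\ge0$ from $h\ge0$, is fine.
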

In this  section, we will show the conditions \eqref{eq:huang-xiong-jet-conditions} are also necessary for the expansion \eqref{eq:intro_green_expansion}, and prove  $H_p$ provides  an upper  bound of   $w$ and  a lower bound of the  volume ratio.  

\begin{proposition}\label{prop:curvature-jet-vanishing}
Let $(M^{n\ge 3},g)$ be a non-parabolic manifold and with  $\operatorname{Ric}\geq0$, $p\in M$ and $G_p$ be the minimal normalized  Green function. 
Suppose
\[
    H_p:=\lim_{x\to p}
    \bigl(G_p(x)-r^{2-n}\bigr) \text{ exists}. 
\]
Then \eqref{eq:huang-xiong-jet-conditions} holds. 
\end{proposition}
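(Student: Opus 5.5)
Since $H_p$ exists and is finite, the plan is to use Lemma~\ref{prop:green-riesz-representation} to turn this into an integrability condition on the measure $w$ near $p$, and then to read that condition as vanishing of the curvature jets, using the Taylor expansions \eqref{eq:Taylor-R-theta}, \eqref{eq:Taylor-Ric-theta} and the Riccati formula \eqref{eq:beta-integral}.

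\textbf{Step 1: finiteness of a weighted integral of $w$.} By Lemma~\ref{prop:green-riesz-representation},
\[
h(x)=\frac{1}{(n-2)\sigma_{n-1}}\int_M G(x,y)\dd w(y),
\]
and $w\ge 0$. Near $y=p$ one has $G(x,y)\to G(p,y)\ge r(y)^{2-n}$ as $x\to p$. Fatou's lemma together with $\lim_{x\to p}h(x)=H_p<\infty$ then gives
\[
\int_{B_p(t)} r^{2-n}\dd w\le (n-2)\sigma_{n-1}H_p<\infty .
\]
On $B_p(t)$ with $t<\operatorname{inj}_p$ we have $w=(n-2)r^{1-n}\bigl(\frac{n-1}{r}-\Delta r\bigr)\dd V=-(n-2)r^{1-n}\beta\dd V$. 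Using \eqref{volume estimate}, this becomes the finiteness of
\[
\int_{\mathbb S^{n-1}}\int_0^t s^{2-n}\,(-\beta(s,\theta))\dd s\dd\theta .
\]
Here $-\beta\ge 0$ by \eqref{eq:beta-integral} and $\operatorname{Ric}\ge0$.

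\textbf{Step 2: lower order of vanishing.} By \eqref{eq:beta-integral}, $-\beta(s,\theta)\ge s^{-2}\int_0^s\tau^2|B_\theta|^2\dd\tau$ and $-\beta(s,\theta)\ge s^{-2}\int_0^s\tau^2\operatorname{Ric}(\partial_r,\partial_r)\dd\tau$, and both integrands are nonnegative. Suppose $k\le n-4$ is the first index with $(\nabla^k_\theta\operatorname{Ric})_p(\theta,\theta)\not\equiv0$ in $\theta$. Then $\operatorname{Ric}(\partial_r,\partial_r)(s,\theta)\approx \frac{s^k}{k!}(\nabla^k_\theta\operatorname{Ric})_p(\theta,\theta)$ on an open set of $\theta$. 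Nonnegativity forces this leading coefficient to be $\ge0$, and it is $>0$ on an open set. Hence $-\beta\gtrsim s^{k+1}$ there, and $\int_0 s^{2-n}s^{k+1}\dd s$ diverges exactly when $k+3-n\le -1$, i.e.\ $k\le n-4$. This is a contradiction, so $\nabla^k\operatorname{Ric}(p)=0$ for $k\le n-4$. Here one uses that the symmetrized jets $(\nabla^k_\theta\operatorname{Ric})_p(\theta,\theta)$ vanishing for all $\theta$ and all $k\le n-4$ implies the full tensors vanish. I take this from the curvature vanishing lemmas in the Appendix, which handle symmetrization with the previous vanishing as an inductive hypothesis.

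\textbf{Step 3: Riemann tensor jets via $|B_\theta|^2$.} The Riccati equation \eqref{eq:Riccati-B-theta} gives $B_\theta(s)=-\frac{s^{m+1}}{(m+1)!}\cdot c_m(\nabla^m_\theta\operatorname{Rm})_p(\cdot,\theta)\theta+O(s^{m+2})$, with explicit $c_m>0$, where $m$ is the first index with nonzero radial jet of $\operatorname{Rm}$. Then $|B_\theta|^2\sim s^{2m+2}$, so $-\beta\gtrsim s^{2m+3}$. Divergence of $\int_0 s^{2-n+2m+3}\dd s$ occurs iff $2m+5-n\le -1$, i.e.\ $m\le\frac{n-6}{2}$. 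This yields $\nabla^\ell\operatorname{Rm}(p)=0$ for $\ell\le\lfloor\frac{n-6}{2}\rfloor$, again after passing from radial jets $(\nabla^\ell_\theta\operatorname{Rm})_p(\cdot,\theta)\theta$ for all $\theta$ to the full tensor, with the Appendix lemmas and lower-order vanishing as input.

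\textbf{Main obstacle.} I expect the hardest part to be the interplay between the two expansions. The Riccati recursion expresses $B_\theta$ in terms of $R_\theta$ and lower powers of $B_\theta$, so the first nonvanishing order must be identified cleanly, using induction on $m$. The other delicate point is the algebraic step from the radial jets vanishing for all $\theta$ to the vanishing of the full covariant derivatives; this needs the Bianchi identities and is where the Appendix is used. A minor point is justifying the Fatou step near $p$ uniformly, using the local $L^1$ property of $w$ from Lemma~\ref{prop:distribution-laplacian}.
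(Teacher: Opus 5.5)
Your proposal is correct and follows essentially the same route as the paper. Both arguments use the Green representation plus Fatou to get $\int_{B_p(t)} r^{2-n}\dd w\le (n-2)\sigma_{n-1}H_p$, rewrite this in polar coordinates as integrability of $s^{2-n}(-\beta)$, and then turn it into the jet conditions through the nonnegative integrands in \eqref{eq:beta-integral}, the Riccati equation, and the two appendix lemmas. The only difference is organizational: you argue by contradiction from the first nonvanishing radial jet (obtaining $-\beta\ge c\,s^{k+1}$, respectively $-\beta\ge c\,s^{2m+3}$, on an open set of directions), whereas the paper first derives $\beta=O(r^{n-2})$ and then reads off $\operatorname{Ric}(\partial_r,\partial_r),\,|B_\theta|^2=O(t^{n-3})$; your direct computation gives the exact threshold $m\le\lfloor\frac{n-6}{2}\rfloor$ and so avoids the paper's floor/ceiling slip (it should be $q\ge\lceil\frac{n-3}{2}\rceil$) in that step.
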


\begin{proof} Let $h(x)=G_p(x)-r^{2-n}$, by  \eqref{eq:green-riesz-global} and Fatou's Lemma, we know 
\[
    H_p=\lim_{x_i\to p}h(x_i)=\lim_{x_i\to p}\frac{1}{(n-2)\sigma_{n-1}}\int_{M}G(x_i,y)\dd w(y)
    \ge 
    \frac{1}{(n-2)\sigma_{n-1}}
    \int_M G(p,y)\,\dd w(y).
\]
Since $G_p\ge r^{2-n}$, we know that for any $R>0$,  
\begin{equation}\label{eq:weighted-omega-bound}
    \int_{B_p(R)} r^{2-n}\,\dd w
    \leq (n-2)\sigma_{n-1} H_p.
\end{equation}
For $\rho\ll \mathrm{inj}_p(M)$, by Lemma \ref{prop:distribution-laplacian},  \eqref{eq:laplacian-r-beta} and \eqref{volume estimate},   we know that  on $B_p(\rho)\backslash \{p\}$,  there holds 
\[
\begin{aligned}
   \dd w&=\Delta r^{2-n} \dd V=
    (2-n)r^{1-n}
    \left(
        \Delta r-\frac{n-1}{r}
    \right)\dd V\\
    &=
    (2-n)r^{1-n}\beta(r,\theta)r^{n-1}j(r,\theta)\dd r\dd\theta\\
    &=(2-n)\beta(r,\theta) j(r,\theta)\dd r \dd\theta
\end{aligned}
\]
where $\beta(r,\theta)\leq0$ and $j(r,\theta)\in [\frac 12, 2]$.
By \eqref{eq:weighted-omega-bound},
\begin{equation}
\begin{aligned}
    \int_0^\rho
    r^{2-n}
    \int_{S_pM}
    (n-2)
    \bigl(-\beta(r,\theta)\bigr)
    \dd\theta\dd r\le 2\int_{B_p(\rho)} r^{2-n}\dd w
    &
    <+\infty.
\end{aligned}
\label{eq:weighted-omega-integrability}
\end{equation}
Since  $\beta(t,\theta)$ is smooth in $t\in [-\epsilon, \epsilon]$ we know 
\begin{align*}
\beta(t,\theta)=\sum_{l=0}^{k}a_l(\theta)t^l+O(t^{k+1}),
\end{align*}
where $a_l(\theta)$ depends on $\theta$ smoothly\cite[Lemma 4.2]{HuangXiongPole2026}. 
Suppose there exists $q\ge 0$ such that $a_q\ne0$,   then there exists $K\subset S_pM$ such that  $\mathrm{Area}(K)>0$ and $|a_q(\theta)|\ge \delta_0>0$, without loss of generality, assuming $a_{l}\equiv 0$ for $l<q$, then 
\[
    \beta(t,\theta)
    =
    t^q a_q(\theta)+O(t^{q+1}),
    \qquad
   |a_q(\theta)|\ge \delta_0 \text{ for } \theta\in K.
\]
By \eqref{eq:weighted-omega-integrability}, one has
$q\geq n-2$, and hence $\beta(r,\theta) = O(r^{n-2})$. If there is no $q\ge 0$ such that $a_q\neq 0$, then $\beta(t,\theta) = O(t^{n-2})$ also holds.
It follows from \eqref{eq:beta-integral} that
\begin{equation}
    \operatorname{Ric}(\partial_r,\partial_r)(t,\theta)
    =
    O(t^{n-3}),
    \qquad
    |B(t,\theta)|^2
    =
    O(t^{n-3}).
    \label{eq:ricci-B-decay}
\end{equation}
By  \eqref{eq:Taylor-Ric-theta} and Proposition~\ref{prop:ricci-radial-jet}, 
\[
    \nabla^k\operatorname{Ric}(p)=0,
    \qquad
    0\leq k\leq n-4.
\]

For each fixed $\theta \in \mathbb{S}^{n-1}$, $B(t,\theta)$ is smooth in $t$, and
$B(0,\theta)=0$.
Suppose that the order of its first nonzero Taylor term is $q\geq1$:
\[
    B(r,\theta)
    =
    r^q A_q(\theta)+O(r^{q+1}),
    \qquad
    A_q(\theta)\neq0,
\]
where $A_q(\theta)$ is symmetric and smooth.
Since $S_{\theta} (t)$ is self-adjoint, $B(t,\theta)$ is also self-adjoint. Then, 
\[
\begin{aligned}
    |B(r,\theta)|^2
    = \operatorname{tr}\bigl(B(r,\theta)^2\bigr)
    =r^{2q}\operatorname{tr}\bigl(A_q(\theta)^2\bigr)
    +
    O(r^{2q+1}).
\end{aligned}
\]
Since $A_q(\theta)\neq0$ is symmetric,
$\operatorname{tr}\bigl(A_q(\theta)^2\bigr)\neq0$.
Therefore, \eqref{eq:ricci-B-decay} implies
$q\geq
    \left\lfloor\frac{n-3}{2}\right\rfloor$.
The Riccati equation \eqref{eq:Riccati-B-theta} then gives
\[
    R_\theta(t)=O(t^\ell),
    \qquad
    \ell=
    \left\lfloor\frac{n-3}{2}\right\rfloor-1.
\]
By \eqref{eq:Taylor-R-theta}, lemma~\ref{lem:jacobi-jet-isomorphism} and the arbitrariness of $\theta$,
\[
    \nabla^j\operatorname{Rm}(p)=0,
    \qquad
    0\leq j\leq
    \left\lfloor\frac{n-6}{2}\right\rfloor.
\]
\end{proof}

\begin{theorem}
\label{thm:odd-dimensional-equivalence}Let $(M^{n\ge 3},g)$ be a non-parabolic manifold and with  $\operatorname{Ric}\geq0$, $p\in M$ and $G_p$ be the minimal normalized  Green function.  Then
$$      \lim_{x\to p}
        \bigl(G(p,x)-r(x)^{2-n}\bigr)
        \text{ exists}.
   \Longleftrightarrow
        \eqref{eq:huang-xiong-jet-conditions}
    \Longleftrightarrow
        G(p,x)
        =
        r(x)^{2-n}+H_p+O_1(r(x)).
$$
\end{theorem}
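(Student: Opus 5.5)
The plan is to prove the three implications in the cyclic order (i) $\Rightarrow$ (ii) $\Rightarrow$ (iii) $\Rightarrow$ (i), where (i) is existence of $\lim_{x\to p}(G(p,x)-r^{2-n})$, (ii) is the jet condition \eqref{eq:huang-xiong-jet-conditions}, and (iii) is the expansion $G=r^{2-n}+H_p+O_1(r)$. Almost all the work has already been done in earlier results, so the argument is mainly bookkeeping.

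\textbf{(i) $\Rightarrow$ (ii).} This is exactly Proposition~\ref{prop:curvature-jet-vanishing}. Existence of the limit, combined with the Riesz representation \eqref{eq:green-riesz-global} and Fatou's lemma, gives the weighted bound \eqref{eq:weighted-omega-bound}. That bound forces $\beta=O(r^{n-2})$, and the Riccati analysis then yields the vanishing jets.

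\textbf{(ii) $\Rightarrow$ (iii).} This is Proposition~\ref{prop:huang-xiong-pole-expansion} applied with $\Omega=M$, since $\operatorname{Ric}\ge0$ holds globally. Because $M$ is non-parabolic, $u=G_p$ is a local solution of $-\Delta u=(n-2)\sigma_{n-1}\delta_p$ near $p$. The proposition gives $G_p=r^{2-n}+H+O_1(r)$ for some constant $H$, and we set $H_p:=H$. Nonnegativity of $H_p$ follows from $G_p\ge r^{2-n}$ by letting $x\to p$.

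\textbf{(iii) $\Rightarrow$ (i).} The $O_1(r)$ term is Lipschitz and vanishes at $p$, so it is bounded by $Cr$ near $p$. Hence $G(p,x)-r^{2-n}=H_p+O(r)\to H_p$ as $x\to p$, and the limit exists and equals the constant in the expansion.

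The only point needing care is (ii) $\Rightarrow$ (iii): one must check that the hypotheses of Proposition~\ref{prop:huang-xiong-pole-expansion} are met with the global Green function as the local solution. The constant $H$ there must also coincide with the $H_p$ defined by the limit, which the (iii) $\Rightarrow$ (i) step confirms. Since the substantive analysis is contained in Proposition~\ref{prop:curvature-jet-vanishing}, including the appendix jet lemmas Proposition~\ref{prop:ricci-radial-jet} and Lemma~\ref{lem:jacobi-jet-isomorphism}, I expect no real obstacle beyond this consistency check.
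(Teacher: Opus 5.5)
Your proposal is correct and follows the paper's proof. The paper also gets the implication from existence of the limit to \eqref{eq:huang-xiong-jet-conditions} from Proposition~\ref{prop:curvature-jet-vanishing}, and the implication from \eqref{eq:huang-xiong-jet-conditions} to the expansion from Proposition~\ref{prop:huang-xiong-pole-expansion}; the remaining implication, from the expansion back to existence of the limit, is immediate because the $O_1(r)$ term is Lipschitz and vanishes at $p$.
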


\begin{proof}
The conclusion follows from  Propostion~\ref{prop:curvature-jet-vanishing}
and  Proposition~\ref{prop:huang-xiong-pole-expansion}.
\end{proof}

From now on, we will always assume $M$ is a parabolic manifold with $\mathrm{Ric}]\ge 0$ and its minimal normalized Green function has the asymptotic expansion 
\eqref{eq:intro_green_expansion}. 
\begin{corollary}\label{cor:omega-ball-bound}
Let $(M^{n\ge 3},g)$ be a non-parabolic manifold and with  $\operatorname{Ric}\geq0$, $p\in M$ and assume $G_p$ be the minimal normalized Green function satisfying \eqref{eq:intro_green_expansion}. Then for every $R>0$,
\begin{equation}
    w\bigl(B_p(R)\bigr)
    \leq
    (n-2)\sigma_{n-1}H_p R^{n-2}.
    \label{eq:omega-ball-bound}
\end{equation}
\end{corollary}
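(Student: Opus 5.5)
The plan is to take the weighted bound \eqref{eq:weighted-omega-bound}, established in the proof of Proposition~\ref{prop:curvature-jet-vanishing}, as the starting point. It asserts that for every $R>0$
\[
    \int_{B_p(R)} r^{2-n}\,\dd w \le (n-2)\sigma_{n-1}H_p,
\]
and it was derived under exactly the standing assumption that $H_p=\lim_{x\to p}h(x)$ exists, which is guaranteed by \eqref{eq:intro_green_expansion}. For completeness I would recall the derivation. Apply Fatou's lemma to the Green representation \eqref{eq:green-riesz-global} along a sequence $x_i\to p$; lower semicontinuity of $G(\cdot,y)$, which holds by continuity off the diagonal, gives $H_p\ge \frac{1}{(n-2)\sigma_{n-1}}\int_M G(p,y)\,\dd w(y)$. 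Then use the comparison $G(p,y)\ge r(y)^{2-n}$ together with $w\ge 0$ to restrict the integral to $B_p(R)$.

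The conclusion then follows by a pointwise lower bound on the weight. On $B_p(R)$ we have $r<R$, and since $n\ge 3$ this gives $r^{2-n}\ge R^{2-n}$. Because $w$ is a nonnegative Radon measure (Lemma~\ref{prop:distribution-laplacian}), we obtain
\[
    R^{2-n}\,w\bigl(B_p(R)\bigr)\le \int_{B_p(R)} r^{2-n}\,\dd w\le (n-2)\sigma_{n-1}H_p .
\]
Multiplying through by $R^{n-2}$ yields \eqref{eq:omega-ball-bound}.

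The only delicate point is the Fatou step. The measure $w$ may charge $p$ only if it has an atom there, but near $p$, with $\rho<\operatorname{inj}_p(M)$, Lemma~\ref{prop:distribution-laplacian} gives $w\in L^1$. Hence $w(\{p\})=0$, and the value $G(p,p)=\infty$ causes no trouble. With this observation the argument is complete.
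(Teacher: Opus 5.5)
Your proposal is correct and is essentially the paper's proof: the paper also invokes \eqref{eq:weighted-omega-bound}, which applies because \eqref{eq:intro_green_expansion} gives the existence of $\lim_{x\to p}h$, and then uses $r^{2-n}\ge R^{2-n}$ on $B_p(R)$ together with $w\ge 0$. Your extra remark that $w(\{p\})=0$ is harmless but not needed, since Fatou's lemma already yields the bound from $\liminf_i G(x_i,y)\ge G(p,y)$ for every $y$, including $y=p$, where both sides are $+\infty$.
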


\begin{proof}
Since $G(p,x) = r^{2-n}+H_p+O_1(r)$, for any $R>0$,  by \eqref{eq:weighted-omega-bound}, we know 
\[
    R^{2-n}w\bigl(B_p(R)\bigr)\le  \int_{B_p(R)}r^{2-n}\dd w
    \leq
    (n-2)\sigma_{n-1}H_p.
\] 
\end{proof}

\begin{proposition}\label{thm:volume-ratio-estimates}
Let $(M^{n\ge 3},g)$ be a non-parabolic manifold and with  $\operatorname{Ric}\geq0$, $p\in M$ and assume $G_p$ be the minimal normalized Green function satisfying \eqref{eq:intro_green_expansion}  Then, for  every $R>0$,
\begin{equation*}
    0
    \leq
    1-\frac{\operatorname{Vol}(B_p(R))}
         {\omega_nR^n}
    \leq
    \frac{n}{2(n-1)}H_pR^{n-2}.
\end{equation*}
\end{proposition}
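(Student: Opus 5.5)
The plan is to combine the exact volume identity of Corollary~\ref{lem:Riesz_Measure_Monotonicity} with the mass bound on $w$ from Corollary~\ref{cor:omega-ball-bound}; no new analytic input should be needed.

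\emph{Lower bound.} By Lemma~\ref{prop:distribution-laplacian}, $w=(n-2)r^{1-n}w_1\ge 0$, since $w_1=\frac{n-1}{r}-\Delta r\ge0$ by the Laplacian comparison theorem. Hence $w(B_p(\rho))\ge 0$ for every $\rho>0$. The right-hand side of \eqref{eq:volume-riesz-exact} is therefore nonnegative, which gives $1-\frac{\Vol(B_p(R))}{\omega_nR^n}\ge0$. This is just Bishop--Gromov.

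\emph{Upper bound.} Under the expansion \eqref{eq:intro_green_expansion}, Corollary~\ref{cor:omega-ball-bound} gives $w(B_p(\rho))\le (n-2)\sigma_{n-1}H_p\rho^{n-2}$ for every $\rho>0$. Substituting this into \eqref{eq:volume-riesz-exact} yields
\[
1-\frac{V(R)}{\omega_nR^n}\le \frac{1}{(n-2)\omega_nR^n}\int_0^R (n-2)\sigma_{n-1}H_p\,\rho^{2n-3}\dd\rho
=\frac{\sigma_{n-1}H_pR^{2n-2}}{(2n-2)\,\omega_nR^n}.
\]
Using $\sigma_{n-1}=n\omega_n$, the right-hand side equals $\frac{n}{2(n-1)}H_pR^{n-2}$, which is the claimed bound.

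The only point needing care is the measurability and integrability of $\rho\mapsto w(B_p(\rho))$ on $[0,R]$. This function is monotone, and the bound above shows it is dominated by $C\rho^{n-2}$, so the integral is finite. There is no genuine obstacle here: the substantive work was already done in Corollary~\ref{lem:Riesz_Measure_Monotonicity} and Corollary~\ref{cor:omega-ball-bound}.
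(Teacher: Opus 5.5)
Your proposal is correct and is exactly the paper's argument: substitute the mass bound \eqref{eq:omega-ball-bound} into the exact identity \eqref{eq:volume-riesz-exact}, integrate $\rho^{2n-3}$ over $[0,R]$, and use $\sigma_{n-1}=n\omega_n$. Your derivation of the lower bound from $w\ge 0$ spells out the Bishop--Gromov part, which the paper leaves implicit.
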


\begin{proof}By 
\eqref{eq:volume-riesz-exact} and 
\eqref{eq:omega-ball-bound}, we know 
\[
\begin{aligned}
    \omega_nR^n-\operatorname{Vol}(B_p(R))
    &\leq
    \sigma_{n-1}H_p
    \int_0^R \rho^{2n-3}\dd \rho
    =
    \frac{\sigma_{n-1}}{2(n-1)}
    H_pR^{2n-2}.
\end{aligned}
\]
\end{proof}

Combining this proposition with  Colding's almost rigidity theorem for Ricci non-negative manifolds with almost maximal volume \cite[Theorem~0.8]{Colding1997}, see also\cite[Theorem~9.69]{Cheeger01}, we have the following Theorem. 

\begin{theorem}\label{thm:green-function-gh-rigidity}
Let $(M^{n\ge 3},g)$ be a non-parabolic manifold and with  $\operatorname{Ric}\geq0$, $p\in M$ and assume $G_p$ be the minimal normalized Green function satisfying \eqref{eq:intro_green_expansion} 
Then, for any $R>0$, there holds 
\begin{equation*}
    \frac{1}{R}
    d_{\mathrm{GH}}
    \left(
       B_p(R),
        B_0^{\mathbb{R}^n}(R)
    \right)
    \leq
    \Psi_n\!\left(H_pR^{n-2}\right).
\end{equation*}
where $\Psi_n$ is a function satisfying $
    \lim_{t\to 0}\Psi_n(t)=0.$
\end{theorem}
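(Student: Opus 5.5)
The plan is to combine the quantitative volume estimate of Proposition~\ref{thm:volume-ratio-estimates} with Colding's almost-maximal-volume rigidity theorem \cite[Theorem~0.8]{Colding1997}, in the form proved by compactness in \cite[Theorem~9.69]{Cheeger01}. That result reads: for every $\eta>0$ there is $\delta=\delta(\eta,n)>0$ such that if $\operatorname{Ric}\ge 0$ on $B_p(R)$ and
\[
\operatorname{Vol}(B_p(R))\ge (1-\delta)\,\omega_nR^n,
\]
then $\frac1R d_{\mathrm{GH}}(B_p(R),B_0^{\mathbb{R}^n}(R))\le\eta$. Both the hypothesis and the conclusion are invariant under the rescaling $g\mapsto R^{-2}g$. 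It therefore suffices to work at $R=1$, noting that $H_pR^{n-2}$ is exactly the scale-invariant quantity produced by Proposition~\ref{thm:volume-ratio-estimates}.

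\textbf{Step 1.} Fix $R>0$. By Proposition~\ref{thm:volume-ratio-estimates},
\[
1-\frac{\operatorname{Vol}(B_p(R))}{\omega_nR^n}\le \frac{n}{2(n-1)}\,H_pR^{n-2}.
\]
So the volume-defect hypothesis of Colding's theorem holds with $\delta=\frac{n}{2(n-1)}\varepsilon_R$, where $\varepsilon_R:=H_pR^{n-2}$.

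\textbf{Step 2.} Define
\[
\Psi_n(t):=\sup\Bigl\{\tfrac1R d_{\mathrm{GH}}\bigl(B_p(R),B_0^{\mathbb{R}^n}(R)\bigr)\Bigr\},
\]
the supremum taken over all pointed complete manifolds $(M^n,g,p)$ with $\operatorname{Ric}\ge0$ and all $R>0$ satisfying $1-\operatorname{Vol}(B_p(R))/(\omega_nR^n)\le \frac{n}{2(n-1)}t$. This quantity is finite: both balls have diameter at most $2R$, so $\Psi_n\le 2$.

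By Step~1, the desired inequality holds with this $\Psi_n$. By Colding's theorem, for any $\eta>0$ we have $\Psi_n(t)\le\eta$ once $\frac{n}{2(n-1)}t\le\delta(\eta,n)$, which gives $\lim_{t\to0}\Psi_n(t)=0$. Since the supremum ranges over a class that depends only on $n$, the function $\Psi_n$ depends only on $n$.

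\textbf{Main obstacle.} The only point needing care is that Colding's theorem is stated for the metric ball with its induced (extrinsic) distance, and that the pointed Gromov--Hausdorff convention matches the one used here. One should check that the ball $B_p(R)$ with the restricted distance $d_M$ is what \cite[Theorem~9.69]{Cheeger01} controls. If that reference instead gives closeness only at a smaller scale, say for $B_p(R/2)$ under the volume hypothesis on $B_p(R)$, I would apply Step~1 at radius $2R$. This costs only a factor $2^{n-2}$ inside $\Psi_n$, which is harmless. No rate is claimed here; the quantitative rate is the content of Theorem~\ref{main}, proved later.
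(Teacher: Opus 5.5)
Your proposal is correct and matches the paper's argument, which likewise just combines the volume-defect bound of Proposition~\ref{thm:volume-ratio-estimates} with Colding's almost-maximal-volume rigidity theorem \cite[Theorem~0.8]{Colding1997} (see also \cite[Theorem~9.69]{Cheeger01}). Your explicit definition of $\Psi_n$ as a supremum, and your fallback of applying the volume bound at radius $2R$, are harmless refinements of the same one-line proof.
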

The  rest of the paper is devoted to improve the above $\Psi_n$ to be  a polynomial. In \cite{XuZhou23}, quantitative splitting map is constructed under the almost maximal volume growth condition.  More precisely, For a geodesic ball $B_p(1)\subset M$ with non-negative Ricci curvature and volume $\mathrm{Vol}(B_p(1))\ge (1-\tau)\omega_n$, it is proved\cite[Proposition 3.17, Theorem 3.18]{XuZhou23} that for $r_{q_n}=\beta^{-\nu}$, $\nu\ge 4^{n+1}, \beta\le \tau^{-c(n)}$,  there exists a Gromov-Hausdorff approximation $f: B_p(r_{q_n})\to B_0(r_{q_n})\subset \mathbb{R}^n$ such that 
$$r_{q_n}^{-1}d_{GH}\left(B_p(r_{q_n}),B_0^{\mathbb{R}^n}(r_{q_n})\right)\le C\beta^{-\frac{1}{2}}.$$
In the construction, quantitative splitting theorem is used.  The radius $r_{q_n}=\beta^{-\nu}$ is very small as the error $C\beta^{-1}=C\tau^{c(n)}$ goes to zero. Instead of using the splitting theorem, the strategy of this paper is to use a quantitative "almost volume cone implies almost metric cone" property in different scales and add them together to get the polynomial power estimate in any scale.   
\section{A Global representation of $H_p$}\label{sec:Global_representation_of_H_p}
For a function $v\in C^1(M\setminus\{p\})$, Colding \cite{Colding2012} defines
\begin{equation*}
    I_v(s)
    :=
    s^{1-n}
    \int_{\{b=s\}}
    v|\nabla b|\dd A,  \qquad  A(s):=I_{|\nabla b|^2}(s),
\end{equation*}
and computes 
\begin{equation*}
        I_v'(s)
    =
    s^{1-n}
    \int_{\{b=s\}}
    v_\nu\dd A,
\end{equation*}
where $\nu$ denotes the outward unit normal to the regular level set
$\{b=s\}$. Especially, when  $v\equiv1$,  $ I_1(s)\equiv|\mathbb S^{n-1}|= \sigma_{n-1}$.  We will give a global representation of $H_p$ by \eqref{eq:colding-third-monotonicity}. For simplicity, denote 
$$q(s)=s^{3-n}A'(s).$$ 
\begin{theorem}\label{thm:q-limit-at-zero}Let $(M^{n\ge 3},g)$ be a non-parabolic manifold and with  $\operatorname{Ric}\geq0$, $p\in M$ and assume $G_p$ be the minimal normalized Green function satisfying \eqref{eq:intro_green_expansion} 
Then,   for every $R>0$,
\begin{align}\label{local representation}
    q(R)+2(n-1)\sigma_{n-1}H_p
    =
    \frac12
    \int_{\{b\leq R\}}
    D\,b^{2-2n}\dd V.
\end{align}
Moreover, we have  
\begin{equation*}
    H_p
    =
    \frac{1}{4(n-1)\sigma_{n-1}}
    \int_M D\,b^{2-2n}\dd V.
\end{equation*}
and 
\begin{equation}\label{eq:Hp-D-identity}
    R^{-n}
    \int_{\{b\leq R\}}D\dd V
    \leq
    4(n-1)\sigma_{n-1}H_p R^{n-2}.
\end{equation}

\end{theorem}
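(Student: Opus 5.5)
The plan is to apply Colding's third monotonicity formula \eqref{eq:colding-third-monotonicity} on $[s_1,R]$ and let $s_1\to0$. This gives
\[
q(R)-\lim_{s\to0}q(s)=\frac12\int_{\{0<b\le R\}}D\,b^{2-2n}\dd V .
\]
The point $p$ has measure zero and $D\ge0$, so monotone convergence handles the right-hand side. The whole issue is computing $\lim_{s\to0}q(s)$ from the expansion \eqref{eq:intro_green_expansion}. We claim it equals $-2(n-1)\sigma_{n-1}H_p$, which is exactly \eqref{local representation}.

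For this limit, use Colding's formula $A'(s)=s^{1-n}\int_{\{b=s\}}\partial_\nu|\nabla b|^2\dd A$. It is more convenient to write $q(s)$ by the divergence theorem as a flux of $\nabla|\nabla b|^2$. Near $p$, write $G=r^{2-n}(1+H_pr^{n-2}+r^{n-2}O_1(r))$. Then
\[
b=G^{\frac{1}{2-n}}=r\Bigl(1-\tfrac{H_p}{n-2}r^{n-2}+O(r^{n-1})\Bigr),
\]
and $b^{1-n}\nabla b=\frac{1}{2-n}\nabla G$. Since $\nabla G=(2-n)r^{1-n}\nabla r+\nabla O_1$, this yields
\[
|\nabla b|^2=1-2H_p\frac{n-1}{n-2}\,r^{n-2}\cdot\tfrac{n-2}{?}+\dots
\]
More precisely, $|\nabla b|=b^{n-1}|\nabla G|/(n-2)$ and $b^{n-1}=r^{n-1}\bigl(1-\tfrac{(n-1)H_p}{n-2}r^{n-2}+\dots\bigr)$, so
\[
|\nabla b|^2=1-\tfrac{2(n-1)}{n-2}H_p\,b^{n-2}+E ,
\]
where the error $E$ involves $b^{n-1}$ and the gradient of $O_1$.

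If one could differentiate formally in $s$, then $A(s)=\sigma_{n-1}\bigl(1-\tfrac{2(n-1)}{n-2}H_ps^{n-2}\bigr)+o(s^{n-2})$, giving $s^{3-n}A'(s)\to-2(n-1)\sigma_{n-1}H_p$. The obstacle is that $O_1$ has only bounded gradient, so $\partial_\nu|\nabla b|^2$ is not controlled pointwise. To avoid differentiating, I would use the integrated form: average the monotonicity identity over $s_1\in[s/2,s]$, or integrate $q$, and relate $\int_{s/2}^{s}t^{n-3}q(t)\dd t=A(s)-A(s/2)$. Since $q$ is monotone nondecreasing by \eqref{eq:colding-third-monotonicity}, its limit $q(0^+)$ exists, possibly $-\infty$. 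The limit can then be read off from
\[
A(s)-A(s/2)=q(0^+)\frac{s^{n-2}(1-2^{2-n})}{n-2}+o(s^{n-2}).
\]
Here $A$ needs expansion only to order $o(s^{n-2})$, using $I_1\equiv\sigma_{n-1}$ and $|\nabla b|^2$ up to $C^0$ error $O(s^{n-1})$. This uses only the Lipschitz bound on $O_1$ together with smoothness of the $r^{2-n}$ part; the proof of Theorem~\ref{thm:odd-dimensional-equivalence} shows $\beta=O(r^{n-2})$, so $\Delta r$ terms are harmless. Matching coefficients gives $q(0^+)=-2(n-1)\sigma_{n-1}H_p$. This step is the main technical point.

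For the global identity, let $R\to\infty$ in \eqref{local representation}. It suffices to show $q(R)\to0$, or at least $\liminf_{R\to\infty} q(R)\le0$ combined with $q\ge$ its limit. Since $q$ is monotone, $\lim_{R\to\infty}q$ exists. If it were positive, then $A'(s)\ge cs^{n-3}$ for large $s$, forcing $A(s)\to\infty$. This contradicts $A(s)\le \sup|\nabla b|^2\,I_1(s)\le C_n\sigma_{n-1}$, using the gradient bound $|\nabla b|\le1$ valid under $\mathrm{Ric}\ge0$ (Colding). If the limit were negative, then $A$ would eventually decrease like $-cs^{n-2}$, contradicting $A\ge0$. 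Hence $q(\infty)=0$, and monotone convergence gives the formula for $H_p$.

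Finally, \eqref{eq:Hp-D-identity} follows from the global formula. On $\{b\le R\}$ we have $b^{2-2n}\ge R^{2-2n}$, so
\[
R^{2-2n}\int_{\{b\le R\}}D\dd V\le 4(n-1)\sigma_{n-1}H_p .
\]
Multiplying by $R^{n-2}$ gives the stated scaling.
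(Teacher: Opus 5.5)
Your proposal is correct and follows essentially the same route as the paper. You let $s_1\to0$ in Colding's third monotonicity formula and identify $q(0^+)=-2(n-1)\sigma_{n-1}H_p$ by integrating $A'(t)=t^{n-3}q(t)$ against the $C^0$ expansion $A(s)=\sigma_{n-1}-\frac{2(n-1)}{n-2}\sigma_{n-1}H_ps^{n-2}+O(s^{n-1})$. You then get $q(R)\to0$ as $R\to\infty$ from the boundedness of $A$, and finish with $b^{2-2n}\ge R^{2-2n}$ on $\{b\le R\}$.

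The differences are cosmetic. You integrate over $[s/2,s]$ instead of $[\lambda s,s]$ with $\lambda\to0$, and you exclude $q(\infty)>0$ via $A\le\sigma_{n-1}$ where the paper uses $A'\le0$. Do say explicitly that the same matching argument rules out $q(0^+)=-\infty$; the paper treats this as a separate step. Also delete the stray placeholder line in your expansion of $|\nabla b|^2$.
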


\begin{proof}
Since
    $G
    =
    r^{2-n}+H_p+O_1(r)$,
we have
\[
    b
    =
    r-\frac{H_p}{n-2}r^{n-1}
    +O_1(r^n), \quad
    |\nabla b|^2
    =
    1-\frac{2(n-1)}{n-2}H_p r^{n-2}
    +O(r^{n-1}).
\]
Since
\[
    \sigma_{n-1}
    =
    I_1(s)
    =
    s^{1-n}\int_{\{b=s\}}|\nabla b|\dd\sigma,
\]
it follows from the definition of $A$ that
\begin{equation}
\begin{aligned}
    A(s)-\sigma_{n-1}
    &=
    s^{1-n}
    \int_{\{b=s\}}
    \bigl(|\nabla b|^2-1\bigr)|\nabla b|\dd\sigma
    =
    -\frac{2(n-1)}{n-2}
    \sigma_{n-1}H_p\,s^{n-2}
    +O(s^{n-1}).
\end{aligned}
\label{eq:A-small-s-expansion}
\end{equation}
By \cite[Corollary 2.19]{Colding2012}, we know  $A'(s)\leq0$, which implies  $q(s)=s^{3-n}A'(s)\leq0$.
First, we give the rough estimate  $$\lim_{s\to0^+}q(s)>-\infty.$$
Suppose, to the contrary, that $\lim_{s\to0^+}q(s)=-\infty$.
Then for every $N>0$, there exists $s(N)>0$ such that, whenever
$0<s<s(N)$, $q(s)<-N$.
Hence the definition of $q$ gives
\[
\begin{aligned}
    A(s)-A(\lambda s)
    &=
    \int_{\lambda s}^{s}t^{n-3}q(t)\dd t
    \leq
    -\frac{N}{n-2}
    \bigl(1-\lambda^{n-2}\bigr)s^{n-2}, \qquad \forall \lambda \in (0,1).
\end{aligned}
\]
Letting $\lambda\to0$, we obtain
\[
    A(s)-\sigma_{n-1}
    \leq
    -\frac{N}{n-2}s^{n-2},
\]
which contradicts
\eqref{eq:A-small-s-expansion} since $N$ is arbitrary. Now,  we prove
\begin{align}\label{eq:zero limit}
    \lim_{s\to0^+}q(s)
    =
    -2(n-1)\sigma_{n-1}H_p.
\end{align}
By \eqref{eq:colding-third-monotonicity}, we know $q$ is increasing, thus  $q(0^+):=\lim_{s\to0^+}q(s)>-\infty$ exists.  Thus, for every fixed
$\varepsilon>0$, there exists $s_\varepsilon>0$ such that, whenever
$0<t<s<s_\varepsilon$,
\begin{equation*}
    |q(t)-q(0^+)|<\varepsilon, \quad \text{and }
    A(s)-A(\lambda s) = \int_{\lambda s}^{s}t^{n-3}q(t)\dd t, \qquad \forall \lambda \in (0,1).
\end{equation*}
Letting $\lambda\to0$, we obtain
\[
    \left|
        A(s)-\sigma_{n-1}
        -
        \frac{q(0^+)}{n-2}s^{n-2}
    \right|
    \leq
    \frac{\varepsilon}{n-2}s^{n-2},\qquad  \forall s\in (0, s_\varepsilon).
\]
Comparing this with \eqref{eq:A-small-s-expansion} gives 
\[
    \lim_{s\to0^+}q(s)=q(0^+)
    =
    -2(n-1)\sigma_{n-1}H_p.
\]
Substitute this into \eqref{eq:colding-third-monotonicity}, we get the \eqref{local representation}.  Again by \eqref{eq:colding-third-monotonicity} and $q\le 0$, we know 
$$L:=\lim_{R\to \infty}q(R) \quad  \text{ exits and} \qquad L\le 0.$$

Suppose that $L<0$. Then for any fixed $s_0>0$, 

\begin{align}\label{contra}
    A(R)-A(s_0)
    &=
    \int_{s_0}^R t^{n-3}q(t)\dd t
    \leq
    \frac{L}{n-2}
    \bigl(R^{n-2}-s_0^{n-2}\bigr)\to -\infty \qquad \text{ as }  \quad  R\to +\infty. 
\end{align}
But
$A'\leq0$ implies 
\begin{equation*}
    A(R)-A(s_0)\ge 0-
    \lim_{s\to0}A(s)
    =-\sigma_{n-1}, \qquad \forall R>s_0>0,
\end{equation*}
 which contradicts to \eqref{contra}. 
Therefore $L=0$. Letting $R\to \infty$ in \eqref{local representation} and noting $D\ge 0$, w get the conclusion. 
\end{proof}
\section{Local Quantitative Conical Function Estimate via $H_p$}\label{sec:epsilon_conical_function}

Throughout this section, we assume $(M^{n\ge 3},g)$ is a non-parabolic manifold and with  $\operatorname{Ric}\geq0$, $p\in M$ and  $G$ is the minimal normalized Green function satisfying \eqref{eq:intro_green_expansion}. Recall $b=G^{\frac{1}{2-n}}$.  For any $R>0$, we  define  $$\varepsilon_R=H_pR^{n-2},\qquad F=\frac{b^2}{2}, \qquad\text{ and } \quad  F_0=\frac{r^2}{2}.$$
The main result of this section is 
\begin{theorem}\label{thm:quantitative_conical_function}
Under the assumption of this section, there exists $C_n$ and $\varepsilon(n)>0$  such that  if $\varepsilon_{8R}\le \varepsilon(n)$, then 
\begin{align*}
   \left(\frac{1}{R^2}\|F-F_0\|_{L^\infty(B_p(R))}\right)^{\frac{1}{2}}+
    \frac{1}{R} \fint_{B_p(R)}
    |\nabla F-\nabla F_0|\dd V+\left( \fint_{B_p(R)}
    |\operatorname{Hess}F-g|^2\dd V\right)^{\frac{1}{2}}
    \leq
    C_n\varepsilon_{8R}^{\frac{1}{2(n+1)}}.
\end{align*}
\end{theorem}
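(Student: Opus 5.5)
We prove the three estimates in order: first $L^\infty$ closeness of $b$ to $r$, then the gradient estimate from the tracelessness of $\operatorname{Hess}F-g$ and integration by parts, and finally the Hessian estimate from the tensor defect $D$. By scaling ($\varepsilon_R=H_pR^{n-2}$ is scale invariant, and the statement is scale invariant), we normalize $R=1$ and write $\varepsilon=\varepsilon_8$.

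\emph{Step 1: $L^\infty$ bound.} Since $G\ge r^{2-n}$, we have $b\le r$. For the reverse inequality we use $h=G-r^{2-n}$, which satisfies $\Delta h=-w\le 0$ with $h(p)=H_p$. By Lemma~\ref{prop:green-riesz-representation}, $h=\frac{1}{(n-2)\sigma_{n-1}}\int G(x,y)\dd w(y)$. We bound $h$ on $B_p(1)$ by splitting $w$ into a part on $B_p(8)$ and a far part. For the far part, Li--Yau gives $G(x,y)\le C_n r(y)^{2-n}$ when $r(y)\ge 2r(x)$, so this contribution is at most $C\int r^{2-n}\dd w\le CH_p$ by \eqref{eq:weighted-omega-bound}. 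For the near part we cannot bound pointwise, since $G(x,y)$ is singular at $y=x$. We therefore average: take the mean of $h$ over $B_x(\rho)$, use the Li--Yau upper bound together with the volume lower bound from Proposition~\ref{thm:volume-ratio-estimates} (which makes $\Vol B_x(\rho)\ge c\rho^n$ uniformly), and bound $w(B_p(8))\le C\varepsilon$ via Corollary~\ref{cor:omega-ball-bound}. Superharmonicity of $h$ gives $h(x)\ge\fint h$, which is the wrong direction. So the pointwise step goes instead through $b$: $|\nabla b|\le 1$ (Colding's gradient estimate) makes $b$ $1$-Lipschitz, so an $L^1$-smallness of $r-b$ on $B_p(2)$ upgrades to $L^\infty$ smallness with a loss of exponent coming from the ball volume, $\|r-b\|_\infty\lesssim \|r-b\|_{L^1}^{1/(n+1)}$. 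This loss is the source of the $\frac{1}{n+1}$. Since $F-F_0=(b-r)(b+r)/2$, this yields the first term with power $\varepsilon^{1/(2(n+1))}$ after the square root (up to a possible loss in how the $L^1$ bound itself depends on $\varepsilon$).

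\emph{Step 2: Hessian.} Write $\operatorname{Hess}F-g=\tfrac12(\operatorname{Hess}b^2-\tfrac{\Delta b^2}{n}g)+(|\nabla b|^2-1)g$. We control the first piece by \eqref{eq:Hp-D-identity}: on $\{b\le 8\}\supset B_p(1)$ (using $b\ge r-o(1)$ from Step~1), $\int D\le C\varepsilon$. For the second piece we need $\fint_{B_p(1)}(1-|\nabla b|^2)^2$ small. Here we use that $A(s)$ is decreasing from $\sigma_{n-1}$ with $A'(s)=s^{n-3}q(s)$ and $|q|\le 2(n-1)\sigma_{n-1}H_p$ by \eqref{local representation}. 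Hence $\sigma_{n-1}-A(s)\le C\varepsilon$ for $s\le 8$. By the co-area formula and $I_1\equiv\sigma_{n-1}$, $\int_{b\le s}(1-|\nabla b|^2)|\nabla b|^2\dd V=\int_0^s t^{n-1}(\sigma_{n-1}-A(t))\dd t\le C\varepsilon$. Since $0\le 1-|\nabla b|^2\le 1$, this controls $\int(1-|\nabla b|^2)^2$ except where $|\nabla b|$ is small. We handle that set using the volume identity $\Vol\{b\le s\}=\int_0^s t^{n-1}\!\int_{b=t}|\nabla b|^{-1}t^{1-n}$ compared with $\Vol B_p(s)\ge(1-C\varepsilon)\omega_n s^n$ and $\{b\le s\}\supset B_p(s)$.

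\emph{Step 3: gradient.} Note $\nabla F-\nabla F_0=b\nabla b-r\nabla r$. We write $|b\nabla b-r\nabla r|^2=b^2|\nabla b|^2+r^2-2br\langle\nabla b,\nabla r\rangle$. The cross term is handled by integrating $\langle\nabla F,\nabla F_0\rangle$ by parts against a cutoff, using $\Delta F_0\le n$ and $\Delta F=n|\nabla b|^2$, with error controlled by Steps~1 and 2.

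The main obstacle is the $L^1$ estimate of $r-b$ in Step~1, i.e.\ handling $\int_{B_p(8)}G(x,y)\dd w(y)$ near the diagonal. The first approach to try is to integrate $h$ in $x$ over $B_p(2)$ and apply Fubini, using $\int_{B_p(2)}G(x,y)\dd V(x)\le C$ uniformly in $y$ (from Li--Yau and the non-collapsing given by Proposition~\ref{thm:volume-ratio-estimates}). This gives $\int_{B_p(2)}h\le C w(B_p(8))+CH_p\le C\varepsilon$, and hence $\int|r-b|\le C\varepsilon$ because $r^{2-n}-b^{2-n}=-h$ and $b\le r$.
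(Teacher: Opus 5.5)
Your architecture matches the paper's: $L^1$-smallness of $h$ (hence of $r-b$, since $h\ge(n-2)r^{1-n}(r-b)$) is upgraded to $L^\infty$ via the $2$-Lipschitz bound on $r-b$ and non-collapsing, which is where the exponent $\frac1{n+1}$ comes from. This is followed by the splitting $\operatorname{Hess}F-g=\frac12T_b+(|\nabla b|^2-1)g$, with $T_b$ controlled by \eqref{eq:Hp-D-identity}. The gaps are in the step you yourself flag, and in Step~2.

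\textbf{Step 1.} The far-field bound $G(x,y)\le C_nr(y)^{2-n}$ is false in general. You only know non-collapsing up to scale $\sim8$. A manifold that is Euclidean on a huge ball and has a thin conical end has small $\varepsilon_8$ but $G(p,y)r(y)^{n-2}$ unbounded. What is true is $G(x,y)\le C_nG(p,y)$ by Harnack, together with $\int_MG(p,y)\dd w(y)\le(n-2)\sigma_{n-1}H_p$ (the Fatou inequality just before \eqref{eq:weighted-omega-bound}). Likewise, $\sup_{y\in B_p(8)}\int_{B_p(2)}G(x,y)\dd V(x)\le C$ does not follow from Li--Yau plus local non-collapsing. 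The Li--Yau upper bound contains the tail $\int_{16}^\infty t\,\Vol(B_y(t))^{-1}\dd t$, and bounding it needs an extra argument you do not give. For example, the minimum principle gives $\min_{\partial B_p(8)}h\le h(p)=H_p$, and this can be fed into the Li--Yau lower bound for $G(p,\cdot)$.

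More importantly, you discarded the tool that removes the obstacle. Superharmonicity of $h\ge0$ is the wrong direction at $x$, but the right one at the pole $p$, where $h(p)=H_p$ is known. The paper applies Colding's mean value inequality $\Vol(\partial B_p(s))^{-1}\int_{\partial B_p(s)}h\dd A\le C(n)h(p)$ and Bishop--Gromov. This gives $\int_{B_p(2\rho)}h\dd V\le C_nH_p\rho^n$ (Lemma~\ref{lem:L1-h-bound}) in one line, with no Green representation and no near/far splitting.

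\textbf{Step 2.} The set where $|\nabla b|$ is small cannot be controlled by the comparisons you list. Both $\Vol B_p(s)\ge(1-C\varepsilon)\omega_ns^n$ and $\{b\le s\}\supset B_p(s)$ are \emph{lower} bounds on $\Vol\{b\le s\}$. A region with $|\nabla b|\approx0$ carries no $|\nabla b|^2$- or $|\nabla b|^4$-mass, so it can only be excluded by an \emph{upper} bound. The missing input is the reverse inclusion $\{b\le s\}\subset B_p(s+C\varepsilon^{1/(n+1)})$, which follows from your Step~1 and \eqref{eq:sublevel-localization}, combined with Bishop's $\Vol B_p(\rho)\le\omega_n\rho^n$. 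With it, the exact identity $\int_{\{b\le s\}}|\nabla b|^2\dd V=\omega_ns^n$ (co-area and $I_1\equiv\sigma_{n-1}$) gives $\int_{B_p(1)}(1-|\nabla b|^2)^2\dd V\le\int_{\{b\le 1\}}(1-|\nabla b|^2)\dd V\le\omega_n\bigl[(1+C\varepsilon^{1/(n+1)})^n-1\bigr]$. That is the required order, with no splitting. The paper uses the same inclusion, in the form $\{b\le(1-C\varepsilon_{8R}^{1/(n+1)})R\}\subset B_p(R)$, together with $\int_{\{b\le s\}}|\nabla b|^4\dd V=\int_0^st^{n-1}A(t)\dd t$. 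Separately, for $B_p(1)\subset\{b\le8\}$ you only need $b\le r$, not Step~1.

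\textbf{Step 3.} This is a genuinely different route. The paper writes $1-\langle\nabla b,\nabla r\rangle=\frac{d}{dt}\bigl(t-b(\gamma_\theta(t))\bigr)$ along radial geodesics and integrates in polar coordinates using $J(t,\theta)\le t^{n-1}$. So the $L^\infty$ bound alone gives $\fint_{B_p(R)}|\nabla b-\nabla r|^2\le C_n\varepsilon_{8R}^{1/(n+1)}$. Your integration by parts can be completed: the singular part of $\Delta F_0$ has the favorable sign since $F\ge0$, and $\psi(b)$ is a cutoff with bounded Laplacian because $\Delta b=(n-1)|\nabla b|^2/b$. But it also uses the $L^1$ bound on $1-|\nabla b|^2$ from the repaired Step~2, and a second integration by parts for the cutoff term.
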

This theorem is a collection  of the following Lemma \ref{lem:r-b-Linfty}, Lemma\ref{lem:gradient-F-comparison} and Lemma \ref{lem:mu-hessian-estimates}.

\begin{lemma}
\label{lem:L1-h-bound}  Under the assumption of this section,
for every $\rho>0$, $h=G-r^{2-n}$ is a bounded  Lipschitz function  satisfying
\begin{equation*}
    \int_{B_p(2\rho)} h \dd V
    \leq
    C_nH_p\rho^n.
\end{equation*}
\end{lemma}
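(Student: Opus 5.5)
Our plan rests on the Green representation of Lemma~\ref{prop:green-riesz-representation},
\[
h(x)=\frac{1}{(n-2)\sigma_{n-1}}\int_M G(x,y)\dd w(y),
\]
combined with the measure bound of Corollary~\ref{cor:omega-ball-bound}. Boundedness and the Lipschitz property are handled separately: $h$ is superharmonic ($-\Delta h=w\ge0$ away from $p$, and also across $p$ since $G_p$ and $r^{2-n}$ carry the same singularity). By the expansion \eqref{eq:intro_green_expansion}, $h=H_p+O_1(r)$ near $p$, so $h$ is bounded and Lipschitz on a small ball $B_p(r_0)$. Away from $p$, $h$ is the difference of $G_p$, which is smooth, and $r^{2-n}$, which is locally Lipschitz. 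Moreover $0\le h\le G_p\to0$ at infinity. Hence $h$ is bounded on $M$, with $\sup h=H_p$ because a superharmonic function attains its infimum on the boundary; in fact we only need $h\ge0$ and the $L^1$ bound. We read ``Lipschitz'' as locally Lipschitz. A global bound on $\nabla h$ is not needed for the integral estimate, so we will not pursue it.

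For the integral bound we apply Fubini to the representation:
\[
\int_{B_p(2\rho)} h\dd V=\frac{1}{(n-2)\sigma_{n-1}}\int_M\Bigl(\int_{B_p(2\rho)}G(x,y)\dd V(x)\Bigr)\dd w(y).
\]
We split $M$ into $B_p(4\rho)$ and its complement. The standard input is the Li--Yau estimate on manifolds with $\mathrm{Ric}\ge0$:
\[
G(x,y)\le C_n\int_{d(x,y)}^\infty \frac{t}{\Vol(B_x(t))}\dd t .
\]
By Proposition~\ref{thm:volume-ratio-estimates}, if $\varepsilon$ is small we may assume $\Vol(B_p(t))\ge c_nt^n$ at the relevant scales. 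Without smallness we would argue differently; see below.

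A cleaner route avoids Li--Yau entirely. We use instead the direct Green-function identity: since $h\ge0$ is superharmonic with $-\Delta h=w$, its spherical means are monotone. Set $m(t)=t^{1-n}\int_{\partial B_p(t)}h\dd A$, or better, integrate against $r^{2-n}$. By a computation analogous to Corollary~\ref{lem:Riesz_Measure_Monotonicity}, pairing $\Delta h=-w$ with the cutoffs $\varphi_\eps(r/\rho)$, we get
\[
\frac{d}{dt}\int_{\partial B_p(t)}h\dd A \le \int_{\partial B_p(t)}h\,\Delta r\dd A+\int_{\partial B_p(t)}\partial_r h\dd A ,
\]
together with $\int_{\partial B_p(t)}\partial_r h=-w(B_p(t))\ge -C_nH_pt^{n-2}$ from Corollary~\ref{cor:omega-ball-bound}. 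Using $\Delta r\le (n-1)/r$, this gives a differential inequality $\frac{d}{dt}\bigl(t^{1-n}\int_{\partial B_p(t)}h\bigr)\le 0$. Hence the spherical mean $t^{1-n}\int_{\partial B_p(t)}h\dd A$ is nonincreasing in $t$. Its limit at $t=0$ is $\sigma_{n-1}H_p$, by the expansion \eqref{eq:intro_green_expansion} and the fact that the area of small spheres is asymptotic to $\sigma_{n-1}t^{n-1}$. Therefore
\[
\int_{\partial B_p(t)}h\dd A\le \sigma_{n-1}H_pt^{n-1},
\]
and integrating in $t\in(0,2\rho)$ by the coarea formula ($|\nabla r|=1$ a.e.) yields
\[
\int_{B_p(2\rho)}h\dd V\le \frac{2^n\sigma_{n-1}}{n}H_p\rho^n .
\]

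The main obstacle is justifying this monotonicity distributionally across the cut locus. There $\Delta r$ is only a measure and the spheres are not smooth. We will handle it exactly as in Lemma~\ref{monotonicity identity}, pairing with $\varphi_\eps(r/t)$, which keeps all boundary terms as integrals of the Radon measures $w$ and $w_1$ with favorable signs. In the identity for $\frac{d}{dt}\int\varphi_\eps h$, the term involving $\Delta r$ enters as $\int h\,\varphi'\,\dd(\Delta r)$-type contributions. These are controlled using $h\ge0$ and $\Delta r\le (n-1)/r$, so the inequality goes in the right direction.
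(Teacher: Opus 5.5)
Your integral bound is correct and is, in substance, the paper's argument. The paper quotes Colding's mean value inequality for the nonnegative superharmonic function $h$, in the form $\Vol(\partial B_p(s))^{-1}\int_{\partial B_p(s)}h\,\dd A\le C(n)H_p$, and then uses the Bishop--Gromov area bound. You instead prove directly that $t^{1-n}\int_{\partial B_p(t)}h\,\dd A$ is nonincreasing with limit $\sigma_{n-1}H_p$. That is self-contained and gives the explicit constant $2^n\omega_n$.

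Across the cut locus, polar coordinates are easier than pairing $h$ with the singular part of $\Delta r$. Write $t^{1-n}\int_{\partial B_p(t)}h\,\dd A=\int_{\mathbb{S}^{n-1}}h(\gamma_\theta(t))\,t^{1-n}J(t,\theta)\,1_{\{t<c(\theta)\}}\dd\theta$. Three facts control the integrand: $h\ge0$, $t^{1-n}J(t,\theta)$ is nonincreasing, and the integrand only drops to $0$ at the cut time. Hence the increment over $[t_1,t_2]$ is at most $\int_{B_p(t_2)\setminus B_p(t_1)}r^{1-n}\partial_rh\,\dd V=-\int_M\psi(r)\,\dd w\le0$. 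Here $\psi(s)=\int_s^\infty\tau^{1-n}1_{[t_1,t_2]}(\tau)\dd\tau$, and you use $\int_M\langle\nabla h,\nabla\phi\rangle\dd V=\int_M\phi\,\dd w$ for compactly supported Lipschitz $\phi$, which follows from $-\Delta h=w$ on all of $M$.

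What you have not proved is part of the statement. The lemma asserts that $h$ is Lipschitz, and the paper's proof establishes a global bound on $|\nabla h|$. You explicitly weaken this to local Lipschitz. The missing ingredient is a uniform gradient bound for $G$ away from $p$:
\begin{enumerate}
\item Choose $s_0$ so that the expansion gives $|\nabla h|\le C$ on $B_p(s_0)$.
\item On $M\setminus B_p(s_0)$, the maximum principle and $G\to0$ at infinity give $G\le\max_{\partial B_p(s_0)}G$.
\item For $r(x)\ge s_0$, the Cheng--Yau gradient estimate applied to the positive harmonic function $G$ on $B_x(s_0)$ gives $|\nabla G|(x)\le C_ns_0^{-1}G(x)$.
\item Together with $|\nabla r^{2-n}|\le(n-2)s_0^{1-n}$ a.e.\ there, this bounds $|\nabla h|$ a.e.\ on $M$, so $h$ is globally Lipschitz.
\end{enumerate}

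Two smaller points. Delete the claim that $\sup h=H_p$ because a superharmonic function attains its infimum on the boundary. The minimum principle says nothing about the supremum, so this does not follow; it is also not needed, since $0\le h\le G_p$ already gives boundedness. The abandoned Li--Yau paragraph can also go: it would need a smallness assumption that the lemma does not have.
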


\begin{proof}
By \eqref{eq:intro_green_expansion}, we know there exists $s_0>0$ such that if $r(x)\le s_0$, then 
$$|h(x)-H_p|+|\nabla h(x)|\le C.$$
Noting $\lim_{x\to \infty}G(x)=0$, we know $G$ is bounded on $ M\backslash B_p({s_0})$. Moreover,   Cheng-Yau gradient estimate implies $|\nabla G|\le \frac{C}{r(x)}|G|$, which is also bounded on $M\backslash B_p({s_0})$. As a result, we know 
$|\nabla h|\le |\nabla G|+(n-2)|r^{1-n}|\le C$ on $M\backslash B_p({s_0})$. So, $h$ is a bounded Lipschitz function.  By Proposition~2.1, $\Delta h = \Delta\bigl(G-r^{2-n}\bigr)\leq0$, and $h\geq0$ on $M$. Since $\Ric\geq0$ and $h$ is a nonnegative superharmonic
function, by \cite[Lemma~3.6]{Colding2012}, 
\begin{equation*}
    \frac{1}{\Vol(\partial B_p(s))} \int_{\partial B_p(s)} h \dd A \leq C(n) h(p) = C(n) H_p,\quad \forall s>0
\end{equation*}
which combining with Bishop-Gromov volume comparison theorem further implies 
\begin{equation*}
    \int_{B_p(\rho)} h \dd V \leq C(n) H_p \Vol(B_p(\rho))\le C H_p \rho^n.
\end{equation*}
\end{proof}

\begin{lemma}\label{lem:local-noncollapsing}
Under the assumption of this section, there exists $\varepsilon(n)>0$ such that, if $\varepsilon_L\leq\varepsilon(n)$, then
\begin{equation*}
    \Vol\bigl(B_p(L)\bigr)
    \geq
    \frac12\,\omega_nL^n.
\end{equation*}
Moreover, for every $x\in B_p(\frac{L}{4})$ and every $0<s\leq\frac{L}{4}$, one has
\begin{equation*}
    \Vol\bigl(B_x(s)\bigr)
    \geq
    v_ns^n,
    \qquad
    v_n=2^{-(n+1)}\omega_n.
\end{equation*}
\end{lemma}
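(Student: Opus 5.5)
The first claim follows directly from Proposition~\ref{thm:volume-ratio-estimates}. Applying it with $R=L$ gives
\[
\frac{\Vol(B_p(L))}{\omega_nL^n}\ge 1-\frac{n}{2(n-1)}\varepsilon_L .
\]
If we choose $\varepsilon(n)\le \frac{n-1}{n}$, the right-hand side is at least $\frac12$. We also record the slightly stronger consequence for later use. By Bishop--Gromov monotonicity, $\Vol(B_p(\rho))/\omega_n\rho^n$ is non-increasing in $\rho$, so $\Vol(B_p(\rho))\ge\frac12\omega_n\rho^n$ for every $0<\rho\le L$.

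For the second claim, fix $x\in B_p(\frac L4)$ and $0<s\le\frac L4$. We transfer the non-collapsing from $p$ to $x$ in two steps.
\begin{enumerate}
\item[(i)] Since $B_p(\frac{3L}{4})\subset B_x(L)$, we get
\[
\Vol(B_x(L))\ge \Vol\bigl(B_p(\tfrac{3L}{4})\bigr)\ge \tfrac12\omega_n\bigl(\tfrac34\bigr)^n L^n.
\]
This loses a factor $(3/4)^n$, which is too much. A better choice is to compare at radius $2L$ around $x$, using $B_p(L)\subset B_x(\frac{5L}{4})$, which requires non-collapsing of $B_p(L)$ only. This gives
\[
\Vol\bigl(B_x(\tfrac{5L}{4})\bigr)\ge \tfrac12\omega_nL^n .
\]
\item[(ii)] Bishop--Gromov at $x$ then gives, for $s\le\frac L4<\frac{5L}{4}$,
\[
\Vol(B_x(s))\ge \frac{s^n}{(5L/4)^n}\Vol\bigl(B_x(\tfrac{5L}{4})\bigr)\ge \frac12\bigl(\tfrac45\bigr)^n\omega_n s^n .
\]
\end{enumerate}

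The constant obtained in (ii) is $\frac12(4/5)^n\omega_n$. Since $4/5>1/2$, this is at least $2^{-(n+1)}\omega_n=v_n$, so the required bound holds.

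The only point needing care is the choice of comparison ball in (i). We must place $B_p(L)$, or a large concentric ball about $p$, inside a ball about $x$ whose radius is only a bounded multiple of $L$; this keeps the loss in Bishop--Gromov below the allowed factor $2^{-n}$. The inclusion $B_p(L)\subset B_x(L+d(p,x))$ with $d(p,x)<\frac L4$ achieves this, and everything else is routine.
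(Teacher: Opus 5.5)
Your proof is correct and follows the same route as the paper. The first bound is Proposition~\ref{thm:volume-ratio-estimates} with $\varepsilon(n)=\frac{n-1}{n}$, and the second comes from $B_p(L)\subset B_x(\frac{5L}{4})$ together with Bishop--Gromov at $x$, which is the detail the paper's one-line proof leaves implicit. Your aside that comparing via $B_p(\frac{3L}{4})\subset B_x(L)$ loses ``too much'' is mistaken, since it gives $\frac12(\frac34)^n\omega_n\ge v_n$ just as well, but this does not affect the argument.
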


\begin{proof}
Choose $\varepsilon(n)=\frac{n-1}{n}$. The conclusions follow from 
Theorem~\ref{thm:volume-ratio-estimates} and 
 Bishop--Gromov volume comparison theorem.
\end{proof}

\begin{lemma}\label{lem:b-r-comparison}Under the assumption of this section,
there exist $c_0(n)\in(0,1)$ and $\varepsilon(n)>0$ such that, if $\varepsilon_L<\varepsilon(n)$,
then for every $x\in B_p(\frac{L}{4})$,
\begin{equation*}
    c_0(n)r(x)\leq b(x)\leq r(x).
\end{equation*}
Consequently, for any $\rho \le\frac{ c_0(n)L}{4}$, there holds 
\begin{equation}
    \{b\leq\rho\}
    \subset
    B_p\left(\frac{L}{4}\right).
    \label{eq:sublevel-localization}
\end{equation} 
\end{lemma}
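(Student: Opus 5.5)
The upper bound $b\le r$ is immediate: $G\ge r^{2-n}$ (the comparison recalled before Lemma~\ref{prop:green-riesz-representation}) gives $b=G^{\frac{1}{2-n}}\le r$ everywhere. The real content is the lower bound $b\ge c_0 r$, which is equivalent to
$$G(p,x)\le c_0^{2-n}r(x)^{2-n},\qquad\text{i.e.}\qquad h(x)\le (c_0^{2-n}-1)\,r(x)^{2-n}\quad\text{on } B_p\left(\tfrac{L}{4}\right).$$
I would obtain this by combining the upper Green function estimate of Li--Yau with the local volume non-collapsing from Lemma~\ref{lem:local-noncollapsing}.

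By Li--Yau \cite[Theorem 5.2]{LiYau1986}, on a manifold with $\mathrm{Ric}\ge0$,
$$G(p,x)\le C_n\int_{r(x)}^\infty\frac{t}{\Vol(B_p(t))}\dd t .$$
I split this integral at $t=L$.

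For $r(x)\le t\le L$, Lemma~\ref{lem:local-noncollapsing} (applied with $\varepsilon_L\le\varepsilon(n)$) together with Bishop--Gromov monotonicity of $t^{-n}\Vol(B_p(t))$ gives $\Vol(B_p(t))\ge \tfrac12\omega_n t^n$. So this part contributes at most $C_n r(x)^{2-n}$.

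For $t\ge L$ I cannot use Bishop--Gromov, since it only pushes lower bounds downward in scale. Instead I use Proposition~\ref{thm:volume-ratio-estimates}, which holds at every scale, but it only gives $\Vol(B_p(t))\ge\omega_n t^n\bigl(1-\tfrac{n}{2(n-1)}H_pt^{n-2}\bigr)$, and this is useless once $H_pt^{n-2}$ is large. That is the main obstacle.

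To get around it, I avoid the tail integral altogether and argue via the maximum principle on the region where I actually need the estimate:

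\begin{itemize}
\item[(i)] On $\partial B_p(L/4)$, and more generally for $y$ with $r(y)\in[L/8,L/4]$, bound $h$. Superharmonicity of $h$, the mean value bound $\fint_{\partial B_p(s)}h\le C_nH_p$ from the proof of Lemma~\ref{lem:L1-h-bound}, and the Harnack inequality applied to the positive harmonic function $G$ on the annulus $B_p(L/2)\setminus B_p(L/16)$ give
$$G\le C_n\Bigl(L^{2-n}+\fint_{\partial B_p(s)}G\Bigr)\le C_n\bigl(L^{2-n}+H_p\bigr)\le C_nL^{2-n},$$
using $\varepsilon_L\le1$. The Harnack constant depends only on $n$ because of the non-collapsing in Lemma~\ref{lem:local-noncollapsing}.
\item[(ii)] Compare on $B_p(L/4)\setminus\{p\}$: the function $G$ is harmonic with pole $r^{2-n}$, while $G_{B_p(L/4)}(p,\cdot)+\sup_{\partial B_p(L/4)}G$ dominates $G$ there. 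The Dirichlet Green function satisfies $G_{B_p(L/4)}\le G$ and, from the Li--Yau integral truncated at $L$, is bounded by $C_nr^{2-n}$ using only volumes of balls of radius $\le L$. Since $r\le L/4$, the boundary term $C_nL^{2-n}$ is itself $\le C_nr^{2-n}$.
\end{itemize}

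This yields $G\le C_n r^{2-n}$ on $B_p(L/4)$, so $c_0=C_n^{-1/(n-2)}$.

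Finally, for the consequence~\eqref{eq:sublevel-localization}: if $b(x)\le\rho\le c_0L/4$ but $x\notin B_p(L/4)$, choose $x'$ on a minimal geodesic from $p$ to $x$ with $r(x')$ slightly less than $L/4$. I would use the maximum principle on the super-level set: $b$ is a proper exhaustion with sublevel sets connected to $p$, and on $\partial B_p(L/4)$ we have $b\ge c_0L/4$. Hence the component of $\{b\le\rho\}$ containing $p$ lies in $B_p(L/4)$. Because $G$ is minimal and harmonic off $p$, $\{G\ge \rho^{2-n}\}$ has no component avoiding $p$ (by the maximum principle, since $G\to0$ at infinity), which gives the inclusion.
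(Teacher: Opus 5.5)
Your bound $b\le r$ and the final localization step are fine. The lower bound $b\ge c_0r$, however, rests on two claims that are not established as you state them.

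First, in step (i) the mean-value bound controls only the spherical average, $\fint_{\partial B_p(s)}G\le s^{2-n}+C_nH_p$. Turning this into a bound at every point of $\partial B_p(s)$ requires $\sup_{\partial B_p(s)}G\le C_n\inf_{\partial B_p(s)}G$. The Cheng--Yau Harnack inequality is a statement on balls, and its constant is dimensional with no non-collapsing needed. Propagating it around an annulus needs a chain of boundedly many balls of radius comparable to $s$, inside the annulus, joining any two points of $\partial B_p(s)$. The non-collapsing of Lemma~\ref{lem:local-noncollapsing} does not by itself supply such chains, so ``the constant depends only on $n$ because of non-collapsing'' is not a valid justification.

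The comparability does hold for the particular function $G(p,\cdot)$, but for a different reason. The two-sided Li--Yau estimate $G(p,y)\simeq\int_{r(y)}^\infty t\,\Vol(B_p(t))^{-1}\dd t$ makes $G(p,\cdot)$ comparable to a function of $r$ alone. The paper sidesteps the issue and uses non-collapsing where it genuinely enters. For $r(x)=t$, it averages $h$ over the \emph{ball} $B_x(t/8)\subset B_p(2t)$. Lemma~\ref{lem:L1-h-bound} and $\Vol(B_x(t/8))\ge c_nt^n$ produce $y\in B_x(t/8)$ with $h(y)\le C_nH_p$, hence $G(y)\le C_nt^{2-n}$. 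Cheng--Yau on $B_x(t/2)$ then gives $G(x)\le C_nG(y)$.

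Second, in step (ii) the bound $G_{B_p(L/4)}(p,\cdot)\le C_nr^{2-n}$ is asserted rather than proved. Li--Yau's theorem concerns the global minimal Green function, whose bound involves the full tail $\int_r^\infty$. A truncated version for the Dirichlet Green function of a ball needs its own argument, for example domain monotonicity of heat kernels up to time $\sim L^2$ together with a Dirichlet spectral gap $\lambda_1(B_p(L/4))\ge c_nL^{-2}$ afterwards.

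More to the point, step (ii) is superfluous. The local argument above uses only $H_pt^{n-2}\le\varepsilon_L\le1$, so it applies at every scale $t=r(x)\le L/4$. It yields $G\le C_nr^{2-n}$ on all of $B_p(L/4)$ directly, which is exactly the paper's proof.

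Your original Li--Yau idea also closes without any maximum principle. The Li--Yau \emph{lower} bound gives $\int_L^\infty t\,\Vol(B_p(t))^{-1}\dd t\le C_n\inf_{\partial B_p(L)}G\le C_n\fint_{\partial B_p(L)}G\le C_n(L^{2-n}+H_p)\le C_nL^{2-n}$. This is precisely the tail control you identified as the obstacle. Combined with your estimate on $[r(x),L]$, it gives $G\le C_nr^{2-n}$.
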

\begin{proof}Since $G\ge r^{2-n}$,
it suffices to prove the lower bound. Fix
\[
    x\in B_p\left(\frac{L}{4}\right),
    \qquad
    t:=r(x)>0.
\]
By Lemma~\ref{lem:local-noncollapsing},
$\Vol(B_x(t/8)) \geq v_nt^n$.
Since $B_x(t/8) \subset B_p(2t)$,
Lemma~\ref{lem:L1-h-bound} gives
\[
    \int_{B_x(t/8)}h\dd V
    \leq
    C_nH_pt^n.
\]
Hence there exists $y\in B_x(t/8)$ such that $h(y)\leq C_nH_p$.
Moreover,
$ y\in B_x\left(t/8\right),
    $ and $t=r(x) $ implies  $r(y)\geq 7t/8.$
Therefore
\[
    G(y)
    =
    r(y)^{2-n}+h(y)
    \leq
    C_nt^{2-n}+C_nH_p\le C_n(1+H_p t^{n-2})t^{2-n}\le  C_n t^{2-n},
\]
where  we use the assumption $H_pt^{n-2}\le \varepsilon_{L}\le 1$. 

Since $G$ is positive harmonic on $B_x(t/2)$, the Cheeger--Yau Harnack inequality
gives $G(x)\leq e^{C_nd(x,y)/t}\le  C_nG(y\leq C_nt^{2-n}$. Since $G=b^{2-n}$, we get
\[
    b(x)\geq c_0(n)t
    =
    c_0(n)r(x).
\]
Moreover, on
$\partial B_p(\frac{L}{4})$ one has $b\geq\frac{ c_0(n)L}{4}$.
Equivalently,
\[
    G
    \leq
    \left(
       \frac{ c_0(n)L}{4}
    \right)^{2-n}
    \qquad
    \text{on }\partial B_p\left(\frac{L}{4}\right).
\]
Since $\lim_{x\to \infty}G(x)=0$,
the maximum principle gives
\[
    G
    \leq
    \left(
       \frac{ c_0(n)L}{4}
    \right)^{2-n}
    \qquad
    \text{on }
    M\setminus B_p\left(\frac{L}{4}\right).
\]
Thus $b\geq\frac{ c_0(n)L}{4}$ on $M\setminus B_p(\frac{L}{4})$,
which proves \eqref{eq:sublevel-localization}.
\end{proof}

\begin{lemma}\label{lem:r-b-Linfty}Under the assumption of this section,
there exist $C_n<\infty$ and $\varepsilon(n)>0$ such that, if $\varepsilon_{8R}\leq\varepsilon(n)$,
then
\begin{equation*}
    \|F-F_0\|_{L^\infty(B_p(R))}
    \leq
    C_nR^2\varepsilon_{8R}^{\frac{1}{n+1}}.   
\end{equation*}
\end{lemma}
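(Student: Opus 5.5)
The plan is to bound $r-b$ pointwise, using only three ingredients: the $L^1$ bound on $h$, volume non-collapsing, and the fact that $b$ is $1$-Lipschitz. Since $G\ge r^{2-n}$ we have $b\le r$, so $0\le F_0-F=\frac12(r-b)(r+b)\le R\,(r-b)$ on $B_p(R)$. It therefore suffices to show
\[
r(x)-b(x)\le C_nR\,\varepsilon_{8R}^{\frac{1}{n+1}}\qquad\text{for all } x\in B_p(R).
\]
For the Lipschitz property I will use Colding's gradient estimate $|\nabla b|\le 1$ for $\operatorname{Ric}\ge0$ from \cite{Colding2012}. Together with the fact that $b$ and $r$ both vanish at $p$, this gives $b$ $1$-Lipschitz.

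Fix $x\in B_p(R)$, set $t=r(x)$, and choose a parameter $\eta\in(0,\frac18]$ to be fixed later.

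\emph{Case $t\le\eta R$.} Trivially $r-b\le t\le \eta R$.

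\emph{Case $t>\eta R$.} Let $s=\eta t$. Apply Lemma~\ref{lem:local-noncollapsing} with $L=8R$; it applies since $x\in B_p(2R)=B_p(L/4)$ and $s\le L/4$. This gives $\Vol(B_x(s))\ge v_n\eta^nt^n$. Since $B_x(s)\subset B_p(2t)$, Lemma~\ref{lem:L1-h-bound} gives $\int_{B_x(s)}h\le C_nH_pt^n$. Hence there is $y\in B_x(s)$ with
\[
h(y)\le C_nH_p\eta^{-n}.
\]
Now write $b=r\,(1+hr^{n-2})^{-\frac1{n-2}}$ and use $(1+a)^{-\frac1{n-2}}\ge 1-a$. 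This gives
\[
b(y)\ge r(y)-h(y)r(y)^{n-1}\ge r(y)-C_n\eta^{-n}H_p(2R)^{n-2}\cdot 2R\ge r(y)-C_n\eta^{-n}\varepsilon_{8R}R,
\]
where we used $r(y)\le 2t\le 2R$. Since both $b$ and $r$ are $1$-Lipschitz and $d(x,y)<s\le\eta R$,
\[
r(x)-b(x)\le r(y)-b(y)+2s\le C_n\bigl(\eta+\eta^{-n}\varepsilon_{8R}\bigr)R .
\]

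Choosing $\eta=\varepsilon_{8R}^{\frac1{n+1}}$ balances the two terms and yields $r-b\le C_nR\,\varepsilon_{8R}^{\frac1{n+1}}$ in both cases. We need $\varepsilon(n)$ small enough that $\eta\le \frac18$ and that Lemma~\ref{lem:local-noncollapsing} applies. Multiplying by $R$ gives the claimed bound $\|F-F_0\|_{L^\infty(B_p(R))}\le C_nR^2\varepsilon_{8R}^{\frac1{n+1}}$.

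The main obstacle is that $h$ is superharmonic. The mean value inequality therefore runs the wrong way, and the integral bound does not directly give a pointwise upper bound on $h(x)$. The Lipschitz transfer from a good nearby point $y$ is exactly what gets around this, and it is the source of the exponent loss $\frac1{n+1}$. If $|\nabla b|\le1$ were not available, I would instead use the Cheng--Yau gradient estimate for $G$ on $B_x(t/2)$. With $G\le C_nt^{2-n}$ from Lemma~\ref{lem:b-r-comparison}, this gives $|\nabla b|\le C_n$ there, and the argument goes through with a worse constant.
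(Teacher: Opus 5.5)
Your proof is correct and uses the same ingredients as the paper (Lemma~\ref{lem:L1-h-bound}, non-collapsing at scale $8R$, the inequality $h\ge (r-b)\,r^{1-n}$, and the $2$-Lipschitz bound on $r-b$). You organize them as a good-point selection in a ball of radius $\eta t$ with $\eta$ optimized, whereas the paper takes the ball $B_x(d/8)$, $d=r(x)-b(x)$, on which $h\ge c_n d\,r(x)^{1-n}$ pointwise, and integrates to get $d^{n+1}\le C_n\varepsilon_{8R}R^{n+1}$ directly, with no parameter to tune. One small point: when $H_p=0$ your choice $\eta=\varepsilon_{8R}^{1/(n+1)}$ gives $\eta=0$, so in that case run the argument with arbitrary $\eta>0$ and let $\eta\to0$.
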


\begin{proof}
Fix $x\in B_p(R)$, since  $d:=r(x)-b(x)\geq0,$ we can assume $d>0$ without loss of generality. By $d\leq t\leq R$, $B_x( d/8 ) \subset B_p(2R)$. Thus for any $y\in B_x(d/8)$,
we have
\[
    |r(y)-r(x)|
    \leq
    d(x,y)
    \leq
    \frac{d}{8}\le \frac{r(x)}{8}.
\]
Hence
\begin{equation}\label{eq:ry_sim_t}
   \frac{7r(x)}{8}\le  r(y)\le \frac{9r(x)}{8},
    \qquad
    \forall y\in    B_x\left(\frac{d}{8}\right).
\end{equation}
Applying Lemma~\ref{lem:b-r-comparison} at scale $L=8R$,
we obtain
\begin{equation}\label{eq:by_sim_t}
 c_nr(x)\le  b(y)\le 2 r(x)
    \qquad, \forall y\in
    B_x\left(\frac{d}{8}\right).
\end{equation}
Thus by \eqref{eq:ry_sim_t} and \eqref{eq:by_sim_t}, $b(y)^{2-n}-r(y)^{2-n}\geq c_n r^{1-n}(x)\cdot(r(y)-b(y))$.
Since both $r$ and $b$ are $1$-Lipschitz\cite[Theorem 3.1]{Colding2012}, the function $r-b$ is
$2$-Lipschitz. Therefore, 
\[
\begin{aligned}
    (r-b)(y)
    &\geq
    (r-b)(x)-2d(x,y)
    \geq
    d-\frac{d}{4}
    =
    \frac{3d}{4},\qquad  \forall y\in B_x\left( \frac{d}{8}\right).
\end{aligned}
\]
Consequently,
\[
    h(y)
    =
    G(y)-r(y)^{2-n}
    = b(y)^{2-n}-r(y)^{2-n}
    \geq
    c_ndr^{1-n}(x),\qquad  \forall y\in B_x\left( \frac{d}{8}\right).
\]
Applying Lemma~\ref{lem:local-noncollapsing} at scale $8R$ gives $ \Vol(B_x(\frac{d}{8})) \geq c_nd^n$.
Moreover, Lemma~\ref{lem:L1-h-bound} gives
\[
     C_nd^{n+1}r^{1-n}(x)
    \leq
    \int_{B_x(d/8)}h\dd V\le \int_{B_p(2R)}h\dd V
    \leq
    C_nH_pR^n
    =
    C_n\varepsilon_{8R}R^2.
\]
Since $r(x)\leq R$, it follows that
\begin{align}\label{distance error}
    0\le r(x)-b(x)=d
    \leq
    C_nR\varepsilon_{8R}^{\frac{1}{n+1}},
\end{align}
which proves
\[
    |F-F_0|
    =
    \frac12|b-r|(b+r)
    \leq
    C_nR^2\varepsilon_{8R}^{\frac{1}{n+1}}.
\]
\end{proof}

\begin{lemma}\label{lem:gradient-F-comparison}
Under the assumption of this section, there exist $C_n<\infty$ and $\varepsilon(n)>0$ such that, if
$\varepsilon_{8R}\leq\varepsilon(n)$,
then
\begin{equation*}
    \fint_{B_p(R)}
    |\nabla F-\nabla F_0|\dd V
    \leq
    C_nR\varepsilon_{8R}^{\frac{1}{2(n+1)}}.
\end{equation*}
\end{lemma}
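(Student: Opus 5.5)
We need to bound the $L^1$ average of $|\nabla F-\nabla F_0|=|b\nabla b-r\nabla r|$ on $B_p(R)$. The natural approach is an $L^2$ estimate obtained by integration by parts, with the $L^\infty$ bound of Lemma~\ref{lem:r-b-Linfty} as input; this explains the square-root loss in the exponent, $\frac{1}{2(n+1)}$ versus $\frac{1}{n+1}$.

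\textbf{Setup.} Fix a cutoff $\phi$ with $\phi\equiv1$ on $B_p(R)$, $\operatorname{spt}\phi\subset B_p(2R)$ and $|\nabla\phi|\le C/R$. Both $r$ and $b$ are $1$-Lipschitz, and $b\le r$, so $|\nabla F|,|\nabla F_0|\le 2R$ on $B_p(2R)$. Write $u=F-F_0$. By Lemma~\ref{lem:r-b-Linfty}, applied on $B_p(2R)$, we have $|u|\le C_nR^2\varepsilon_{16R}^{1/(n+1)}$ there. Alternatively one can use the pointwise bound $0\le r-b\le C_nR\varepsilon^{1/(n+1)}$ from \eqref{distance error} directly.

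\textbf{Integration by parts.} Compute
\[
\int\phi^2|\nabla u|^2=-\int\phi^2u\,\Delta u-2\int\phi u\nabla\phi\cdot\nabla u .
\]
The boundary term is bounded by $C R^{-1}\cdot R\cdot\|u\|_\infty\Vol(B_p(2R))$. For the first term note that $\Delta F=n|\nabla b|^2\in[0,n]$ is a bounded function. Moreover $\Delta F_0=r\Delta r+1\le n$ holds in the distributional sense, with $\Delta F_0=n-r\,w_1$, where $w_1$ is the Radon measure of Lemma~\ref{monotonicity identity}. Hence
\[
\Bigl|\int\phi^2u\,\Delta u\Bigr|\le \|u\|_\infty\Bigl(2n\Vol(B_p(2R))+\int_{B_p(2R)}r\,\dd w_1\Bigr).
\]
The measure term is controlled by $w=(n-2)r^{1-n}w_1$, so that $r\,\dd w_1=\frac{r^n}{n-2}\dd w$. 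Corollary~\ref{cor:omega-ball-bound} then gives
\[
\int_{B_p(2R)}r\,\dd w_1\le C R^n\,w(B_p(2R))\le C_nH_pR^{2n-2},
\]
which is $\le CR^n$ once $\varepsilon\le1$.

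\textbf{Conclusion.} Collecting the terms gives
\[
\int_{B_p(R)}|\nabla u|^2\le C_nR^2\varepsilon^{1/(n+1)}R^n .
\]
By the noncollapsing of Lemma~\ref{lem:local-noncollapsing}, $\Vol(B_p(R))\ge cR^n$, and Cauchy--Schwarz yields
\[
\fint_{B_p(R)}|\nabla u|\le C_nR\,\varepsilon^{\frac{1}{2(n+1)}} .
\]
The remaining step is to adjust scales. The argument above naturally involves $\varepsilon_{16R}$, i.e.\ it requires the $L^\infty$ bound on $B_p(2R)$. To stay with $\varepsilon_{8R}$, I would take the cutoff supported in $B_p(\frac54R)$ and rerun the proof of Lemma~\ref{lem:r-b-Linfty} for $x\in B_p(\frac54R)$. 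That proof only uses Lemma~\ref{lem:b-r-comparison} at scale $L=8R$, valid on $B_p(2R)$, and the inclusion $B_x(d/8)\subset B_p(2R)$, so it goes through with the same constants.

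\textbf{Main obstacle.} The delicate point is justifying the integration by parts. $F_0=r^2/2$ is only Lipschitz, and $\Delta F_0$ is a signed measure whose singular part lives on the cut locus. The safe route is to write
\[
\int\phi^2|\nabla u|^2=\int\nabla(\phi^2u)\cdot\nabla F-\int\nabla(\phi^2u)\cdot\nabla F_0 .
\]
In each term the test function $\phi^2u$ is Lipschitz and compactly supported. The pairings can then be evaluated against the distributional Laplacians: for $F$ this is a smooth function away from $p$, and the singularity at $p$ is harmless because $b^2$ is $C^1$ there. For $F_0$ the pairing is evaluated via the Radon measure $\Delta r^2=2(n-r w_1)$. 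One should also check the sign conventions, but only $|u|$ times the total variation enters, so the signs do not matter.
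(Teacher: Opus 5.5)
Your argument is correct, but it takes a genuinely different route from the paper.

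\textbf{The paper's route.} The proof there is first-order and radial. It splits $|\nabla F-\nabla F_0|\le |b-r|\,|\nabla b|+r|\nabla b-\nabla r|$. Along each radial geodesic $\gamma_\theta$, before the cut time, the function $f_\theta(t)=t-b(\gamma_\theta(t))$ satisfies $f_\theta'=1-\langle\nabla b,\nabla r\rangle\ge 0$. Polar coordinates with $J(t,\theta)\le t^{n-1}$ then give $\int_{B_p(R)}(1-\langle\nabla b,\nabla r\rangle)\dd V\le R^{n-1}\int_{\mathbb{S}^{n-1}}f_\theta(R\wedge c(\theta))\dd\theta\le C_nR^n\varepsilon_{8R}^{1/(n+1)}$, using \eqref{distance error}. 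The pointwise inequality $|\nabla b-\nabla r|^2\le 2(1-\langle\nabla b,\nabla r\rangle)$ and Cauchy--Schwarz finish the proof. This needs only $|\nabla b|\le 1$ and the sup bound on $B_p(R)$ itself. It uses no Laplacians, no cutoff and no enlarged ball, and it never has to pair test functions against the singular part of $\Delta r^2$ on the cut locus.

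\textbf{Your route.} Your energy (Caccioppoli-type) estimate uses second-order information instead. You use that $\Delta F=n|\nabla b|^2$ is bounded and that $\Delta F_0=n-r\,w_1$ is a signed Radon measure of mass $O(R^n)$ on $B_p(2R)$. Then $\int\phi^2|\nabla u|^2\lesssim \|u\|_\infty R^n$.

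\textbf{Trade-offs.} Your route costs the technical points you flagged: pairing Lipschitz test functions against the measure $\Delta F_0$, and checking that $\Delta b^2$ has no atom at $p$. It also needs the $L^\infty$ bound on a slightly larger ball. Your rerun of Lemma~\ref{lem:r-b-Linfty} on $B_p(\frac54 R)$ supplies this correctly, since it still only uses Lemma~\ref{lem:b-r-comparison} at $L=8R$ and $B_x(d/8)\subset B_p(2R)$. What you gain is robustness. Your argument never uses the radial structure at $p$; it is a general interpolation of the form ``$L^\infty$-closeness plus Laplacians of locally bounded mass implies $L^2$-closeness of gradients'', and would transfer to other model functions. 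In fact your mass bound for $\Delta F_0$ does not need Corollary~\ref{cor:omega-ball-bound}: testing the nonnegative measure $n-\Delta F_0$ against a cutoff gives $O(R^n)$ from Laplacian comparison alone.

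Both routes actually yield the stronger bound $\fint_{B_p(R)}|\nabla F-\nabla F_0|^2\dd V\le C_nR^2\varepsilon_{8R}^{1/(n+1)}$. They therefore give the same exponent $\frac{1}{2(n+1)}$ after Cauchy--Schwarz.
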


\begin{proof}
Observe that
\begin{equation}
\begin{aligned}
    |\nabla F-\nabla F_0|
    &=
    |b\nabla b-r\nabla r|
    \leq
    |b-r|\,|\nabla b|
    +
    r|\nabla b-\nabla r|.
\end{aligned}
\label{eq:gradient-F-pointwise}
\end{equation}
In geodesic polar coordinates centered at \(p\), for 
\(\theta\in \mathbb{S}^{n-1}\subset T_pM\), let
\[
\gamma_\theta(t)=\exp_p(t\theta),
\qquad
0<t<\min\{R,c(\theta)\},
\]
denote the radial minimizing geodesic, where \(c(\theta)\) is the cut
time in the direction \(\theta\), and define $f_\theta(t):=t-b(\gamma_\theta(t))$.
Since the cut locus has measure zero,
\begin{align*}
    \int_{B_p(R)}
    \bigl(1-\langle\nabla b,\nabla r\rangle\bigr)\dd V
    &=
    \int_{B_p(R)\setminus(\operatorname{Cut}(p)\cup\{p\})}
    \bigl(1-\langle\nabla b,\nabla r\rangle\bigr)\dd V
    \\
    &=
    \int_{\mathbb{S}^{n-1}}
    \int_0^{R\wedge c(\theta)}
    \bigl(
        1-\langle\nabla b,\nabla r\rangle
    \bigr)(\gamma_\theta(t))
    J(t,\theta)\dd t\dd \theta,
\end{align*}
where $J(t,\theta) \leq t^{n-1}$ by Bishop volume comparison theorem.
Before the cut time, $f_\theta'(t) = \bigl( 1-\langle\nabla b,\nabla r\rangle \bigr)(\gamma_\theta(t))$.
Therefore
\[
    \int_{B_p(R)}
    \bigl(1-\langle\nabla b,\nabla r\rangle\bigr)\dd V
    \leq
    R^{n-1}
    \int_{\mathbb{S}^{n-1}}
    f_\theta(R\wedge c(\theta))\dd\theta.
\]
By \eqref{distance error}, we know $f_\theta(R\wedge c(\theta))\le C_n R\varepsilon_{8R}^{\frac{1}{n+1}}$ and 
\[
    \int_{B_p(R)}
    \bigl(1-\langle\nabla b,\nabla r\rangle\bigr)\dd V
    \leq
    C_nR^n\varepsilon_{8R}^{\frac{1}{n+1}}.
\]
Moreover, since $|\nabla b|\leq1$,
\[
\begin{aligned}
    |\nabla b-\nabla r|^2
    &=
    1+|\nabla b|^2
    -
    2\langle\nabla b,\nabla r\rangle
    \leq
    2\bigl(
        1-\langle\nabla b,\nabla r\rangle
    \bigr).
\end{aligned}
\]
Using Lemma~\ref{lem:local-noncollapsing}, we obtain
\[
    \fint_{B_p(R)}
    |\nabla b-\nabla r|^2\dd V
    \leq
    C_n\varepsilon_{8R}^{\frac{1}{n+1}}.
\]
Applying the Cauchy--Schwarz inequality to
\eqref{eq:gradient-F-pointwise}, together with $r\leq R$ and \eqref{distance error}, yields
\[
    \fint_{B_p(R)}
    |\nabla F-\nabla F_0|\dd V
    \leq
    C_nR\varepsilon_{8R}^{\frac{1}{2(n+1)}}.
\]
\end{proof}

\begin{lemma}\label{lem:A-defect}Under the assumption of this section, 
for every $s>0$, there holds 
\begin{equation}
    0
    \leq
    \sigma_{n-1}-A(s)
    \leq
    C_nH_ps^{n-2}
    \label{eq:A-defect}
\end{equation}
and
\begin{equation}
    \int_{\{b\leq R\}}
    |\nabla b|^2
    \bigl(1-|\nabla b|^2\bigr)\dd V
    \leq
    C_nH_pR^{2n-2}.
    \label{eq:gradient-defect-sublevel}
\end{equation}
\end{lemma}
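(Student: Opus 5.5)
The two estimates in Lemma~\ref{lem:A-defect} will both be read off from the integrated form of Colding's third monotonicity, specialised to $q(s)=s^{3-n}A'(s)$, using the limits already established in the proof of Theorem~\ref{thm:q-limit-at-zero}. For \eqref{eq:A-defect}, the lower bound $A(s)\le\sigma_{n-1}$ follows from $A'\le 0$ and $\lim_{s\to0^+}A(s)=\sigma_{n-1}$, the latter by \eqref{eq:A-small-s-expansion}. For the upper bound, recall that $q$ is nondecreasing by \eqref{eq:colding-third-monotonicity} and that $q(0^+)=-2(n-1)\sigma_{n-1}H_p$ by \eqref{eq:zero limit}. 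Hence $q(t)\ge -2(n-1)\sigma_{n-1}H_p$ for every $t>0$. Integrating $A'(t)=t^{n-3}q(t)$ over $(0,s)$ then gives
\[
\sigma_{n-1}-A(s)=-\int_0^s t^{n-3}q(t)\dd t\le \frac{2(n-1)}{n-2}\sigma_{n-1}H_p s^{n-2}.
\]
This is \eqref{eq:A-defect} with $C_n=\frac{2(n-1)}{n-2}\sigma_{n-1}$, and it holds for all $s>0$ with no smallness assumption on $\varepsilon$.

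For \eqref{eq:gradient-defect-sublevel}, apply the coarea formula to $b$ on the sublevel set. Since $\dd V=|\nabla b|^{-1}\dd A\,\dd s$ on regular level sets, we get
\[
\int_{\{b\le R\}}|\nabla b|^2(1-|\nabla b|^2)\dd V=\int_0^R\int_{\{b=s\}}|\nabla b|\,(1-|\nabla b|^2)\dd A\,\dd s .
\]
The inner integral equals $s^{n-1}\bigl(I_1(s)-A(s)\bigr)$, because $I_1(s)=s^{1-n}\int_{\{b=s\}}|\nabla b|\dd A$ and $A(s)=s^{1-n}\int_{\{b=s\}}|\nabla b|^3\dd A$. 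Using $I_1\equiv\sigma_{n-1}$ and \eqref{eq:A-defect}, the integrand is at most $C_nH_ps^{2n-3}$. Integrating from $0$ to $R$ gives $C_nH_pR^{2n-2}/(2n-2)$, which is the claimed bound.

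The only delicate point is justifying the coarea step, since critical points of $b$ (zeros of $\nabla G$) may be present. Because $G$ is harmonic away from $p$, its critical set has measure zero, and by Sard's theorem almost every level is regular. On the critical set the integrand vanishes, because of the factor $|\nabla b|^2$. So the coarea formula applies as written, with the level-set integrals taken over a.e. $s$, and this is the same framework in which $A$ and $I_1$ are already used. One should also confirm that $A$ is absolutely continuous with $A'=t^{n-3}q$, so that integrating from $0$ is legitimate. This was already used in the proof of Theorem~\ref{thm:q-limit-at-zero}, so I expect no real obstacle here.
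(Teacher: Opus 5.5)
Your proposal is correct and matches the paper's proof. Both bound $q$ below by $q(0^+)=-2(n-1)\sigma_{n-1}H_p$ using monotonicity, integrate $A'(t)=t^{n-3}q(t)$ from $0$ with $A(0^+)=\sigma_{n-1}$, and then use the coarea formula with $I_1\equiv\sigma_{n-1}$ to rewrite the sublevel integral as $\int_0^R s^{n-1}(\sigma_{n-1}-A(s))\dd s$; your extra care about critical points is harmless, since the coarea formula holds for the locally Lipschitz function $b$ with no regularity of level sets, and your explicit constant $C_n=\frac{2(n-1)}{n-2}\sigma_{n-1}$ is what that integration gives.
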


\begin{proof}
By \eqref{eq:zero limit}, we know
\[
    -2(n-1)\sigma_{n-1}H_p
    =
    \lim_{s\to0}q(s)
    \leq
    q(s)
    \leq0.
\]
Since
   $ A'(s)=s^{n-3}q(s)$ and $
    \lim_{s\to0}A(s)=\sigma_{n-1}$, 
integration gives \eqref{eq:A-defect}.
Furthermore,
\[
\begin{aligned}
    \sigma_{n-1}-A(s)
    &=
    I_1(s)-I_{|\nabla b|^2}(s)
    =
    s^{1-n}
    \int_{\{b=s\}}
    |\nabla b|
    \bigl(1-|\nabla b|^2\bigr)\dd A.
\end{aligned}
\]
By the coarea formula,
\begin{align*}
    \int_{\{b\leq R\}}
    |\nabla b|^2
    \bigl(1-|\nabla b|^2\bigr)\dd V
    &=
    \int_0^R
    s^{n-1}
    \bigl(\sigma_{n-1}-A(s)\bigr)\dd s.
\end{align*}
Using \eqref{eq:A-defect} immediately yields
\eqref{eq:gradient-defect-sublevel}.
\end{proof}

\begin{lemma}\label{lem:mu-hessian-estimates}
Under the assumption of this section,  there exist $C_n<\infty$ and $\varepsilon(n)>0$ such that, if $\varepsilon_{8R}\leq\varepsilon(n)$, then

\begin{equation*}
    \fint_{B_p(R)}
    |\operatorname{Hess}F-g|^2\dd V
    \leq
    C_n\varepsilon_{8R}^{\frac{1}{n+1}}.
\end{equation*}
\end{lemma}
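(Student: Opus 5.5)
The plan is to split $\operatorname{Hess}F-g$ into its traceless part and its trace part. The traceless part is controlled by the global representation of $H_p$ (Theorem~\ref{thm:q-limit-at-zero}). The trace part is controlled by the radial gradient deficit already estimated in the proof of Lemma~\ref{lem:gradient-F-comparison}.

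\textbf{Step 1 (pointwise decomposition).} Since $\Delta F=\frac12\Delta b^2=n|\nabla b|^2$, the orthogonal decomposition into traceless and trace parts gives, on $M\setminus\{p\}$,
\begin{equation*}
    |\operatorname{Hess}F-g|^2
    =\Bigl|\operatorname{Hess}F-\frac{\Delta F}{n}g\Bigr|^2+n\Bigl(\frac{\Delta F}{n}-1\Bigr)^2
    =\frac14\Bigl|\operatorname{Hess}b^2-\frac{\Delta b^2}{n}g\Bigr|^2+n\bigl(1-|\nabla b|^2\bigr)^2 .
\end{equation*}
Because $\operatorname{Ric}\ge0$, the first term is at most $\frac14 D$. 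The point $p$ has measure zero, so it can be ignored.

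\textbf{Step 2 (traceless part).} Since $b\le r$, we have $B_p(R)\subset\{b\le R\}$. Then \eqref{eq:Hp-D-identity} gives
\[
\int_{B_p(R)}D\dd V\le \int_{\{b\le R\}}D\dd V\le 4(n-1)\sigma_{n-1}H_pR^{2n-2}.
\]
Lemma~\ref{lem:local-noncollapsing} at scale $L=8R$ (with $\varepsilon_{8R}\le\varepsilon(n)$) gives $\Vol(B_p(R))\ge c_nR^n$. Dividing, the average of $D$ over $B_p(R)$ is at most $C_n\varepsilon_R\le C_n\varepsilon_{8R}$. This is bounded by $C_n\varepsilon_{8R}^{1/(n+1)}$ once $\varepsilon_{8R}\le 1$.

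\textbf{Step 3 (trace part).} This is the only step that needs care. Lemma~\ref{lem:A-defect} controls $1-|\nabla b|^2$ only with the weight $|\nabla b|^2$, which could degenerate where $|\nabla b|$ is small. So instead I would use $|\nabla b|\le1$ and $\langle\nabla b,\nabla r\rangle\le|\nabla b|$ to write the pointwise chain
\[
(1-|\nabla b|^2)^2\le 1-|\nabla b|^2\le 2(1-|\nabla b|)\le 2\bigl(1-\langle\nabla b,\nabla r\rangle\bigr).
\]
The proof of Lemma~\ref{lem:gradient-F-comparison} already established
\[
\int_{B_p(R)}\bigl(1-\langle\nabla b,\nabla r\rangle\bigr)\dd V\le C_nR^n\varepsilon_{8R}^{\frac1{n+1}}.
\]
That argument used polar coordinates, $f_\theta'=1-\langle\nabla b,\nabla r\rangle$ before the cut time, $J\le t^{n-1}$, and the $L^\infty$ bound \eqref{distance error}. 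Dividing by $\Vol(B_p(R))\ge c_nR^n$ bounds the average of the trace term by $C_n\varepsilon_{8R}^{1/(n+1)}$.

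Adding Steps 2 and 3 gives $\fint_{B_p(R)}|\operatorname{Hess}F-g|^2\dd V\le C_n\varepsilon_{8R}^{1/(n+1)}$, which is the statement. The only delicate point is the weight issue in Step 3, and routing through $1-\langle\nabla b,\nabla r\rangle$ resolves it. This route is also what fixes the exponent at $\frac1{n+1}$: the exponent is inherited from the $L^\infty$ estimate of Lemma~\ref{lem:r-b-Linfty}, not from the $D$-integral.
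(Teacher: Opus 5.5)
Your proof is correct. Steps~1 and~2 are exactly the paper's argument: the same splitting $|\operatorname{Hess}F-g|^2=\frac14|T_b|^2+n(1-\mu)^2$, with $T_b=\operatorname{Hess}b^2-\frac{\Delta b^2}{n}g$ and $\mu=|\nabla b|^2$, and the same bound on the traceless part via \eqref{eq:Hp-D-identity} and Lemma~\ref{lem:local-noncollapsing}. Your treatment of the trace part, however, takes a genuinely different route.

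The paper splits $1-\mu$ into its mean $\mu_B$ and its oscillation over $B_p(R)$, and handles the two pieces separately.
\begin{enumerate}
\item The mean is pushed toward $1$ using the coarea identity $\int_{\{b\le\rho\}}\mu^2\dd V=\int_0^\rho s^{n-1}A(s)\dd s$, the area defect \eqref{eq:A-defect}, the volume bound of Proposition~\ref{thm:volume-ratio-estimates}, and the inclusion $\{b\le(1-C_n\varepsilon_{8R}^{1/(n+1)})R\}\subset B_p(R)$.
\item The oscillation is handled by a Poincar\'e inequality together with the identity $b\nabla\mu=T_b(\nabla b,\cdot)$. This reduces $\int_{B_p(2R)}|\nabla\mu|^2$ to the weighted integral $\int b^{2-2n}D$, and hence to $H_p$.
\end{enumerate}

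Your pointwise chain $(1-\mu)^2\le 2\bigl(1-\langle\nabla b,\nabla r\rangle\bigr)$, combined with the radial integration already carried out in the proof of Lemma~\ref{lem:gradient-F-comparison}, bypasses the Poincar\'e inequality, the identity for $\nabla\mu$, and the area functional altogether. It also avoids the sublevel-set inclusion. That inclusion does not follow from \eqref{distance error} alone, since that estimate is stated only on $B_p(R)$; one also needs a lower bound for $b$ outside $B_p(R)$, e.g.\ by the maximum principle as in Lemma~\ref{lem:b-r-comparison}. So your route is shorter and more elementary.

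What the paper's route buys is structural information. The oscillation of $\mu$ is controlled linearly by $\varepsilon_R$ through the tensor defect $D$, so the loss of exponent is confined to the single mean term. In both arguments the exponent $\frac{1}{n+1}$ is ultimately inherited from Lemma~\ref{lem:r-b-Linfty}, as you correctly point out.
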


\begin{proof}
Define $$T_b:= \operatorname{Hess}b^2-\frac{\Delta b^2}{n}g, \qquad  \mu:=|\nabla b|^2\qquad \text{ and } \quad 
\mu_B :=  \fint_{B_p(R)}\mu\dd V.$$
By $\Delta b^2 = 2n|\nabla b|^2 = 2n\mu$, we have
\[
    \operatorname{Hess}F-g
    =
    \frac12 T_b+(\mu-1)g.
\]
Noting$\langle T_b,g\rangle=0$, we know
\begin{equation}
\begin{aligned}
    |\operatorname{Hess}F-g|^2
    =
    \frac14|T_b|^2+n(1-\mu)^2.
\end{aligned}
\label{eq:hessian-F-pointwise}
\end{equation}
Since $B_p(R)\subset\{b\leq R\}$ and $\Vol(B_p(R)) \geq c_nR^n$, by \eqref{eq:Hp-D-identity},  we obtain
\begin{equation}
    \fint_{B_p(R)}
    |T_b|^2\dd V
    \leq R^{-n}
    \int_{\{b\leq R\}}
    |T_b|^2\dd V\le 
    C_n\varepsilon_R.
    \label{eq:Tb-average-bound}
\end{equation}
By the coarea formula and\eqref{eq:A-defect},
\begin{equation*}
    \int_{\{b\leq (1-C_n\varepsilon_{8R}^{\frac{1}{n+1}})R\}}
    \mu^2\dd V
    =
    \int_0^{(1-C_n\varepsilon_{8R}^{\frac{1}{n+1}})R}
    s^{n-1}A(s)\dd s
    \geq
    \left(\omega_n-C_n\varepsilon_R\right)\cdot\left((1-C_n\varepsilon_{8R}^{\frac{1}{n+1}})R\right)^n.
\end{equation*}
Noting \eqref{distance error}, we have $\{b\le (1-C_n\varepsilon_{8R}^{\frac{1}{n+1}})R)\}\subset B_p(R)$, which implies  
\begin{align*}
\frac{(1-C_n\varepsilon^{\frac{1}{n+1}}_{8R})\omega_nR^n}{\mathrm{Vol}(B_p(R))}&\le \fint_{B_p(R)}\mu^2=\fint_{B_p(R)}|\mu-\mu_B|^2+\mu_B^2.
\end{align*}
So, Proposition \ref{thm:volume-ratio-estimates} implies 
\begin{align*}
(1-\mu_B)^2\le 1-\mu_B^2&\le \fint_{B_p(R)}|\mu-\mu_B|^2+1-\frac{(1-C_n\varepsilon^{\frac{1}{n+1}}_{8R})\omega_nR^n}{\mathrm{Vol}(B_p(R))}\\
&\le \fint_{B_p(R)}|\mu-\mu_B|^2+C(n)\varepsilon_{8R}^{\frac{1}{n+1}}.
\end{align*}
Thus the Poincar\'e inequality (see, e.g., \cite[Corollary~7.1.11]{PetersenRiemannianGeometry}
and \cite[Theorem~2.11 and Remark~2.82]{CheegerColding1996}) gives
\begin{align*}
    \fint_{B_p(R)}
    (1-\mu)^2\dd V
    &=
    (1-\mu_B)^2
    +
    \fint_{B_p(R)}
    |\mu-\mu_B|^2\dd V\\
    &\le  2\fint_{B_p(R)}|\mu-\mu_B|^2+C(n)\varepsilon_{8R}^{\frac{1}{n
    +_1}}\\
    &\le C_nR^2\fint_{B_p(2R)}|\nabla\mu|^2\dd V+C(n)\varepsilon_{8R}^{\frac{1}{n+1}}.
\end{align*}
By calculation,  $b\nabla\mu = T_b(\nabla b,\cdot)$, thus $|\nabla\mu|^2 \leq b^{-2}|T_b|^2$.
Using $B_p(2R)\subset\{b\leq2R\}$
and \eqref{eq:Hp-D-identity} again, we obtain
\begin{align*}
    \int_{B_p(2R)}
    |\nabla\mu|^2\dd V
    \leq
    \int_{\{b\leq2R\}}
    b^{-2}|T_b|^2\dd V
    \leq
    C_nH_pR^{2n-4}.
\end{align*}
As a result, $\Vol(B_p(R)) \geq c_nR^n$ implies 
\begin{align}\label{eq:1-mu}
\fint_{B_p(R)}
    (1-\mu)^2\dd V 
    \le C(n)\varepsilon_{8R}^{\frac{1}{n+1}}.
\end{align}

Combining
\eqref{eq:hessian-F-pointwise},
\eqref{eq:Tb-average-bound} and \eqref{eq:1-mu}
gives
\[
    \fint_{B_p(R)}
    |\operatorname{Hess}F-g|^2\dd V
    \leq
    C_n\varepsilon_{8R}^{\frac{1}{n+1}}.
\]
\end{proof}

\section{Quantitative Metric Cone via Conical Function}\label{sec:Quantitative Metric Cone via conical function} In Cheeger-Colding theory, during the proof of "almost volume cone implies almost metric cone",  almost conical function is introduced to give the almost cosine law, which further implies the almost metric cone property.\cite{Colding1997}. In this section, we follow \cite{Colding1997} \cite{CheegerColding1996}\cite{XuZhou23}\cite{Jiang} to write a version with precise quantitative order. The polynomial power is necessary for application in the next section. For $F\in C^2(B_p(4))$, we define 
\[
  F_0:=\frac{r^2}{2},  \qquad   e_1:=|\nabla F-\nabla F_0|,
    \qquad
    e_2:=|\hess F-g|.\]
The main result is  
\begin{theorem}\label{thm:three} Let $M$ be a complete non-compact Riemannian manifold and  $\mathrm{Ric\ge 0}$ on $B_p(4)\subset M$. There exists   $\delta_n>0,C_n<\infty$ such that if there exists $F$ satisfying 
\begin{equation}\label{eq:threeinput}
\delta:=\fint_{B_p(3/2)}(e_1+e_2)\dd V\leq\delta_n,
\end{equation}
then there exists a compact metric space $(Z,d_Z)$ of diameter at most $\pi$ such that
\begin{equation}\label{eq:threeconclusion}
d_{GH}\left(B_p\left(\frac{1}{10}\right),B^{C(Z)}_o\left(\frac{1}{10}\right)\right)
\leq C_n\delta^{\beta_n},\qquad
\beta_n=\frac{1}{44(4n+1)},
\end{equation}
where $C(Z)$ is the cone with link $Z$ and $d_{GH}$ is the pointed Gromov-Hausdorff distance. 
\end{theorem}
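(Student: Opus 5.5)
We follow the Cheeger--Colding scheme of "conical function implies almost metric cone", keeping track of every power of $\delta$. The argument has three stages.

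\textbf{Stage 1: pass from $L^1$ to pointwise control along segments.} We use the segment inequality of Cheeger--Colding, valid on $B_p(4)$ under $\mathrm{Ric}\ge 0$ and requiring no non-collapsing. Apply it to $e_1+e_2$ on $B_p(3/2)$. For all pairs $(x,y)\in B_p(1)\times B_p(1)$ outside a set of measure at most $\delta^{a}\mathrm{Vol}(B_p(1))^2$, one minimal geodesic $\gamma_{xy}$ then satisfies
\[
\int_0^{d(x,y)} (e_1+e_2)(\gamma_{xy}(t))\dd t\le C_n\delta^{1-a}.
\]
Fix such a good pair. Integrating $\mathrm{Hess}F(\dot\gamma,\dot\gamma)$ twice gives
\[
F(y)=F(x)+\langle\nabla F(x),\dot\gamma(0)\rangle d+\tfrac{d^2}{2}+O(\delta^{1-a}).
\]
Replacing $\nabla F$ by $r\nabla r$ and $F$ by $r^2/2$ (the $L^\infty$ closeness of $F$ to $F_0$ also follows by integrating $e_1$ along good rays) yields the almost cosine law
\[
r(y)^2=r(x)^2+d(x,y)^2-2\,r(x)\,d(x,y)\cos\angle_x+O(\delta^{b}),
\]
where $\cos\angle_x=\langle\nabla r,\dot\gamma(0)\rangle$. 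Since the good pairs have almost full measure, a standard volume-doubling argument on $B_p(1)$ moves from "almost every pair" to "every pair up to a $\delta^{c}$-perturbation". Here we use the Bishop--Gromov ratio for balls inside $B_p(4)$, which costs only a power such as $\delta^{1/(n+1)}$ when we choose a small ball of radius $\delta^{c}$ around each point. This is where the factors involving $n$ (the $4n+1$) should enter.

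\textbf{Stage 2: build the cross section and the radial comparison.} Take $Z$ to be the metric completion of the level set $\partial B_p(1/2)$, or rather a $\delta^c$-net of it, with the induced angular metric
\[
d_Z(z_1,z_2)=\arccos\Bigl(1-\tfrac{d(z_1,z_2)^2}{2\cdot(1/2)^2}\Bigr)\wedge\pi .
\]
So $\mathrm{diam}\,Z\le\pi$. Define $\Phi:B_p(1/10)\to C(Z)$ by $x\mapsto(r(x),z(x))$, where $z(x)$ is where a good ray from $p$ through $x$, extended by near-gradient flow of $F$, meets $\partial B_p(1/2)$. The $L^1$ bounds on $e_1$, integrated along the flow, control how far the flow lines of $\nabla F$ deviate from radial lines. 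The almost cosine law, applied at $p$ and scaled along rays, then shows that
\[
d(x,y)^2=r(x)^2+r(y)^2-2r(x)r(y)\cos d_Z(z(x),z(y))+O(\delta^{b'}).
\]
This is exactly the cone metric, so $\Phi$ is a $\delta^{b'/2}$-GH approximation once we also check almost surjectivity. That check uses the flow again, moving inward from points of $\partial B_p(1/2)$.

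\textbf{Main obstacle.} The hard step is Stage 2's extension of rays: points of $B_p(1/10)$ need not lie on rays reaching $\partial B_p(1/2)$, and $p$ may be surrounded by cut locus. We would first try to avoid extending rays altogether. Instead we would compare $x,y$ with points $x',y'$ on $\partial B_p(1/2)$ obtained by the gradient flow of $F$, using the Hessian bound to show the flow is almost an isometric dilation: $|\nabla F|$ is close to $r$ and $\mathrm{Hess}F$ is close to $g$ in $L^1$ along good trajectories. Each application of the segment inequality, the flow estimate, and a square root from the cosine law loses a power of $\delta$. Chaining about a dozen such losses, each roughly halving or dividing by $4n+1$, should produce an exponent of the form $\beta_n=1/(44(4n+1))$. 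We would not optimize it.
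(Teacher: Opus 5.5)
Your Stage~1 matches the paper's starting point: segment inequality, the Taylor formula along good segments, and perturbation to good points. Two small corrections there. With $\cos\angle_x=\langle\nabla r,\dot\gamma(0)\rangle$ the sign should be $+2r(x)d\cos\angle_x$. Also, the $L^1$ bounds give no $L^\infty$ control of $F-F_0$, not even modulo constants, but only differences of $\xi=F-F_0$ along good segments are ever needed.

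The genuine gap is the sentence ``the almost cosine law, applied at $p$ and scaled along rays, then shows\ldots''. A first-order law at the vertex $x$ says nothing by itself about the comparison angle at $p$,
\[
K(x,y)=\frac{r(x)^2+r(y)^2-d(x,y)^2}{2r(x)r(y)}.
\]
Unwinding definitions, your cone formula is exactly the claim $K(x,y)\approx K(z(x),z(y))$ whenever $x$ lies almost radially below $z(x)$ and $y$ below $z(y)$. This invariance of comparison angles under almost-radial sliding is the heart of the theorem, and it is the paper's Proposition~\ref{prop:four_points_estimate}: if $e(x_i,z_i)=d(x_i,z_i)-|r(x_i)-r(z_i)|\le C_0\sigma^{43}$, then $|K(x_0,x_1)-K(z_0,z_1)|\le C\sigma^{18}$. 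Its proof moves $\tilde x_0$ to $\tilde z_0$ along a segment $\Gamma$ with $\tilde x_1$ fixed. It differentiates $K(\Gamma(t),\tilde x_1)$ using the first variation of $d(\Gamma(t),\tilde x_1)$ and the Taylor formula for $F$ along the segment from $\Gamma(t)$ to $\tilde x_1$. The resulting ODE for $K$ is controlled by $1-\frac{d}{dt}r(\Gamma(t))$, which integrates to the radial excess, and by iterated segment integrals of $e_1+e_2$, obtained from a double use of the segment inequality. Nothing in your proposal plays this role.

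The mechanism you do suggest, the gradient flow of $F$ as an almost dilation, is not justified as stated. Transferring $\fint(e_1+e_2)\le\delta$ to integrals along flow lines of $\nabla F$ needs lower bounds on the flow Jacobian, i.e.\ control of $\int\operatorname{tr}\hess F$ along trajectories, which the hypotheses give only in $L^1$. Even then you get good behaviour only for almost every starting point, while $z(x)$ must be defined for every $x$ and compared across different flow times.

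The paper sidesteps this. It takes the link to be the small sphere $Z=S_\sigma=\{r=\sigma\}$ with $\sigma=\delta^{1/(44(4n+1))}$, and maps $x\mapsto(r(x),\bar x)$, where $\bar x$ is the point at distance $\sigma$ on a minimizing segment from $p$. This gives an exact radial pair, $e(\bar x,x)=0$, for every $x$, so no outward extension is needed to define the map. Outward points over a given $z\in S_\sigma$ are needed only approximately. They come from the geodesic flow, whose Liouville-measure invariance makes the $L^1$ transfer free (Lemma~\ref{lem:radial_extension}).

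Two more things are missing from your outline:
\begin{enumerate}
\item Your $d_Z=\arccos(1-2d^2)$ on $\partial B_p(1/2)$ is not known to satisfy the triangle inequality. The paper's chain metric at scale $\sigma^3$ is a metric by construction. The price is proving the two distortion bounds separately: subdividing a segment for one, a planar chord plus radial extension for the other.
\item The exponent is not derived. In the paper the GH error is $C_n\sigma$, coming from collapsing $B_p(2\sigma)$ and from chaining about $\sigma^{-3}$ local comparisons of error $\sigma^4$. The value of $\beta_n$ is forced by taking the perturbation radius $\rho=\sigma^{44}$, so that $\delta=\rho^{4n+1}$ beats the factors $\rho^{-2n}$ from the double segment inequality and from the Liouville measure of cones of almost-radial directions.
\end{enumerate}
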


    No non-collapsing condition is assumed.

\begin{lemma}\label{lem:taylor_ode}
    Suppose $\gamma:[0,L] \to B_p(3)\setminus B_{p}(\sigma/2)\subset M$ is a unit-speed minimizing geodesic and $F$ is a smooth function on $B_p(3)\setminus B_{p}(\sigma/2)$. Then 
    \begin{equation}\label{eq:talor_ode}
        F(\gamma(L)) - F(\gamma(0)) = L \langle\grad F (\gamma(0)), \gamma'(0) \rangle +\frac{L^2}{2} + R,
    \end{equation}
    where 
    \begin{equation}\label{eq:taylor_ode_R}
        |R| \leq L \int_{\gamma} |\hess F - g|\dd s 
    \end{equation}
\end{lemma}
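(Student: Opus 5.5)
The plan is to apply the second-order Taylor expansion with integral remainder to the one-variable function $\phi(t):=F(\gamma(t))$ on $[0,L]$. Since $\gamma$ is a unit-speed geodesic contained in $B_p(3)\setminus B_p(\sigma/2)$, where $F$ is smooth, $\phi$ is $C^2$ and
\begin{equation*}
\phi'(t)=\langle\nabla F(\gamma(t)),\gamma'(t)\rangle,\qquad
\phi''(t)=\operatorname{Hess}F(\gamma'(t),\gamma'(t)).
\end{equation*}
The identity $\phi''=\operatorname{Hess}F(\gamma',\gamma')$ uses $\nabla_{\gamma'}\gamma'=0$. Since $|\gamma'|=1$, we may write
\begin{equation*}
\phi''(t)=1+(\operatorname{Hess}F-g)(\gamma'(t),\gamma'(t)).
\end{equation*}

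The integral form of Taylor's theorem then gives
\begin{equation*}
\phi(L)-\phi(0)=L\phi'(0)+\int_0^L(L-t)\phi''(t)\dd t .
\end{equation*}
Substituting the expression for $\phi''$ and using $\int_0^L(L-t)\dd t=\frac{L^2}{2}$, we get exactly \eqref{eq:talor_ode} with
\begin{equation*}
R=\int_0^L(L-t)\,(\operatorname{Hess}F-g)(\gamma'(t),\gamma'(t))\dd t .
\end{equation*}

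For the bound \eqref{eq:taylor_ode_R}, use $0\le L-t\le L$ on $[0,L]$. Also, for a unit vector $v$, $|(\operatorname{Hess}F-g)(v,v)|\le|\operatorname{Hess}F-g|$, since the operator norm is bounded by the Hilbert--Schmidt norm. Together these give
\begin{equation*}
|R|\le L\int_0^L|\operatorname{Hess}F-g|(\gamma(t))\dd t=L\int_\gamma|\operatorname{Hess}F-g|\dd s .
\end{equation*}

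No step is a genuine obstacle. The only point needing care is regularity: $F$ must be $C^2$ along the whole of $\gamma$. This is exactly why the geodesic is required to avoid $B_p(\sigma/2)$, where $F_0$-type functions, or the later application of the lemma, could be singular or nonsmooth. Minimality of $\gamma$ is not needed for this computation; it only matters later, when the lemma is applied with $L=d(\gamma(0),\gamma(L))$ to derive the almost cosine law.
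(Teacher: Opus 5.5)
Your proposal is correct and follows the paper's own proof: set $f=F\circ\gamma$, use Taylor's formula with integral remainder $\int_0^L (L-t)(\hess F-g)(\gamma',\gamma')\dd t$ (the paper writes it first as a double integral), and bound it using $0\le L-t\le L$ and $|(\hess F-g)(v,v)|\le|\hess F-g|$ for unit $v$. Your remark that minimality of $\gamma$ is not needed for this computation is also accurate.
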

\begin{proof}
    Set $f(t):= F \circ \gamma(t)$. Then 
    \begin{equation*}
        \begin{aligned}
            &f(L) - f(0) = L\langle \grad F(\gamma(0)),\gamma'(0) \rangle + \frac{L^2}{2} + \int_0^L \int_0^s (\hess F -g) (\gamma'(t), \gamma'(t)) \dd t \dd s\\
        &= L\langle \grad F(\gamma(0)),\gamma'(0) \rangle + \frac{L^2}{2} + \int_0^L(L-t) (\hess F -g) (\gamma'(t), \gamma'(t)) \dd t, 
        \end{aligned}
    \end{equation*}
    and the conclusion follows from
    \begin{equation*}
        \big|\int_0^L (L-t) (\hess F -g) (\gamma'(t), \gamma'(t)) \dd t\big| \leq L \int_{\gamma} |\hess F - g|\dd s.
    \end{equation*}
\end{proof}
For any 
$x,y,z\in B_p(1)$, define the radical excess of $x,z$ by 
$$e(x,z)=d(x,z)-|r(z)-r(x)|.$$
Define the comparison angle(in fact, cosine of the angle) between $x,y$ by 
$$K(x,y):= \frac{r(x)^2 +r(y)^2 - d(x,y)^2}{2r(x)r(y)}.$$
The following proposition estimates the difference of the comparison angles calculating by  pair of points in different scales when the radical excess is small.  
\begin{proposition}\label{prop:four_points_estimate}Let $M$ be a complete non-compact Riemannian manifold and  $\mathrm{Ric\ge 0}$ on $B_p(4)\subset M$. There exist $\sigma_0(n)>0$ and $C_n<\infty$ such that  for any $\sigma\in (0,\sigma_0(n))$ if  there exists   $F\in C^2(B_p(3))$ satisfying 
\begin{equation*}
    \delta = \fint_{B_{p}(3/2)} \left( e_1+e_2 \right) \dd V, \qquad \delta\le\sigma^{44(4n+1)}.
\end{equation*}
Then for any  $x_i, z_i \in B_{p}(\frac{1}{2})\backslash B_p(\sigma)$ with $e(x_i,z_i)\leq C_0 \sigma^{43}, i =0, 1$, 
there holds 
\begin{equation*}
    |K(x_0,x_1)-K(z_0,z_1)|\leq C(n,C_0) \sigma^{18}.
\end{equation*}
\end{proposition}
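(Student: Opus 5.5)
The strategy is the Cheeger--Colding ``almost cosine law'' argument. The almost conical function $F$ replaces $F_0=r^2/2$ along geodesics. Lemma~\ref{lem:taylor_ode} turns integral control of $e_2$ along a geodesic into a quadratic Taylor expansion of $F$. An integral bound on $e_1$ at an endpoint identifies $\langle\nabla F,\gamma'\rangle$ with $\langle r\nabla r,\gamma'\rangle$.

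\textbf{Step 1: from averages to pointwise good points.} The segment inequality of Cheeger--Colding (which needs only $\mathrm{Ric}\ge0$ on $B_p(4)$, no non-collapsing) gives
\[
\fint\!\!\fint_{B_u(s)\times B_v(s)}\int_{\gamma_{y,w}}e_2\,\dd y\,\dd w\le C_n\, \frac{\Vol(B_p(3/2))}{\Vol(B_u(s))}\,\delta .
\]
Here $B_u(s)$, $B_v(s)$ are balls of radius $s=\sigma^{a}$ inside $B_p(1/2)$, and Bishop--Gromov bounds the volume ratio by $C_n\sigma^{-an}$. Likewise $\fint_{B_u(s)}e_1\le C_n\sigma^{-an}\delta$. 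With $\delta\le\sigma^{44(4n+1)}$ and Chebyshev, we can choose $y_i\in B_{x_i}(s)$ and $w_i\in B_{z_i}(s)$ with the following properties: $e_1(y_i)$ and $e_1(w_i)$ are small; $\int_\gamma e_2$ is small along the minimizing geodesics $y_0y_1$, $w_0w_1$, $y_iw_i$ and from $y_i,w_i$ towards $p$ (or to points near $p$); and all error terms are bounded by a positive power of $\sigma$, say $\sigma^{40}$. Replacing $x_i,z_i$ by $y_i,w_i$ changes $K$ by $O(s/\sigma^{2})$ since $r\ge\sigma$. Choosing $a$ around $20$ keeps this at the level $\sigma^{18}$. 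The exponent $44(4n+1)$ should be exactly the budget from doing this for roughly four simultaneous conditions with $s^{-n}$ losses.

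\textbf{Step 2: almost cosine law.} Apply \eqref{eq:talor_ode} along the geodesic from $y_0$ to $y_1$ and along its reverse, and add. The gradient terms combine into
\[
\langle\nabla F(y_1),\gamma'(L)\rangle-\langle\nabla F(y_0),\gamma'(0)\rangle .
\]
Each gradient term is close to $\langle r\nabla r,\cdot\rangle$ by the $e_1$ control. Applying \eqref{eq:talor_ode} also along radial segments from $p$, together with the facts that $F$ is close to $F_0+c$ along good geodesics and that the constant is fixed by a good point near $p$, yields
\[
F(y)\approx \tfrac12 r(y)^2+c,\qquad d(y_0,y_1)^2\approx r(y_0)^2+r(y_1)^2-2\,\mathrm{(mixed\ term)} .
\]
Here the mixed term is $L\langle\nabla F(y_0),\gamma'(0)\rangle$ expressed through $F$-values. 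The result is that $K(y_0,y_1)$ is determined, up to $O(\sigma^{c})$ errors divided by $r(y_0)r(y_1)\ge\sigma^{2}$, by a ``cone-type'' expression in $r(y_0)$, $r(y_1)$ and an angle-like quantity.

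\textbf{Step 3: transfer along almost radial pairs.} The condition $e(x_i,z_i)\le C_0\sigma^{43}$ says $x_i$ and $z_i$ lie nearly on a common ray from $p$. Using Step 2 for the pairs $(y_0,w_0)$, $(y_1,w_1)$, we get $K(y_i,w_i)\approx1$. In an exact cone, the four-point configuration $\{y_0,y_1,w_0,w_1\}$ with $y_i,w_i$ on the same ray forces $K(y_0,y_1)=K(w_0,w_1)$. Quantitatively, we apply the almost cosine law of Step 2 to the cross pairs $(y_0,w_1)$, $(w_0,y_1)$ as well. Then the comparison-angle function is almost ``bi-affine'' in the radial variables, and we conclude $|K(y_0,y_1)-K(w_0,w_1)|\lesssim\sigma^{18}$.

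\textbf{Main obstacle.} The hard part is Step 3 combined with the exponent bookkeeping. One must show that the Step 2 quantity really depends only on the ``angular'' data, so that sliding along a near-ray changes nothing, and must control how each division by $r\ge\sigma$ and each Chebyshev loss $\sigma^{-an}$ eats the power. I would first try to derive an explicit identity: $F(z)-F(x)-\frac12(r(z)^2-r(x)^2)\approx0$ along good pairs, plus the first-variation identity $\langle\nabla r(y_0),\gamma'\rangle\approx$ its value at $w_0$. Then I would track the exponents to confirm that the final power is $18$, given $43$ for the excess and $44(4n+1)$ for $\delta$.
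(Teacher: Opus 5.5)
There is a genuine gap, and it sits exactly in the step you flag as the main obstacle. Your Step 2 yields no relation between different pairs. $K(y_0,y_1)$ is by definition a function of $r(y_0),r(y_1),d(y_0,y_1)$. Lemma~\ref{lem:taylor_ode} plus $e_1$-control at $y_0$ only gives the pointwise first-variation identity $r(y_0)\,d(y_0,y_1)\langle\nabla r(y_0),\gamma'(0)\rangle\approx\tfrac12\bigl(r(y_1)^2-r(y_0)^2-d(y_0,y_1)^2\bigr)$. In words, the true angle at $y_0$ between $\nabla r$ and the direction to $y_1$ equals the comparison angle. That is information about tangent directions at a single point. Under $\mathrm{Ric}\ge0$ alone there is no triangle comparison that turns finitely many such relations on the four points $y_0,y_1,w_0,w_1$ into $K(y_0,y_1)\approx K(w_0,w_1)$. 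So the ``bi-affine'' conclusion of Step 3 is asserted, not derived. The identity you propose to look for, $\langle\nabla r(y_0),\gamma'\rangle\approx$ its value at $w_0$, is false even in Euclidean space. Take $p=0$, $y_0=(1,0)$, $w_0=(2,0)$, $y_1=(1,1)$ in a coordinate plane; the two values are $0$ and $-1/\sqrt2$.

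What is missing is to integrate your infinitesimal identity along the almost radial segment. Moving a point radially changes $d(\cdot,y_1)$ at exactly the rate that keeps $K$ fixed, and you must integrate this. The paper splits $|K(x_0,x_1)-K(z_0,z_1)|\le|K(x_0,x_1)-K(z_0,x_1)|+|K(z_0,x_1)-K(z_0,z_1)|$. It then freezes $\tilde x_1$ and lets the other point run along the minimizing geodesic $\Gamma$ from $\tilde x_0$ to $\tilde z_0$. Write $K(t)=K(\Gamma(t),\tilde x_1)$ and $b=r(\tilde x_1)$. Combine three things: the first variation of $d(\Gamma(t),\tilde x_1)^2$, Lemma~\ref{lem:taylor_ode} applied along every geodesic from $\Gamma(t)$ to $\tilde x_1$, and the substitution $\Gamma'\approx\nabla F/r$. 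This gives $K'(t)=\bigl(\frac{K}{r}-\frac1b\bigr)(1-r')-\frac{E_s}{2br}$. The first term is bounded in $L^1$ by $C\sigma^{-1}e(\tilde x_0,\tilde z_0)$. The $|\Gamma'-\nabla r|$ part of $E_s$ is bounded by $C\sigma^{-2}(2L\,e)^{1/2}$, because $|\Gamma'-\nabla r|^2=2(1-r')$. This is where the excess hypothesis enters, and why $43$ becomes $39/2$ and then $18$.

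It also shows that your Step 1 controls the wrong objects. You need the double integral $\int_\Gamma\int_{\Gamma_{w\tilde x_1}}(e_1+e_2)\,ds\,d\mathcal H^1(w)$ over the whole one-parameter family of geodesics, not finitely many geodesics. This comes from applying the segment inequality twice, at cost $\rho^{-2n}$. Finally, your perturbation radius $s=\sigma^{20}$ is incompatible with the hypothesis. Perturbing the endpoints by $s$ can raise the excess to order $s$, so you need $s\lesssim\sigma^{43}$; the paper takes $\rho=\sigma^{44}$. With $s=\sigma^{20}$, the $e^{1/2}\sigma^{-2}$ term above would only give $\sigma^{8}$.
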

\begin{proof}
 Noting 
\begin{align}\label{eq:four_K_two_terms}
|K(x_0,x_1)-K(z_0,z_1)|\le |K(x_0,x_1)-K(z_0,x_1)|+|K(z_0,x_1)-K(z_0,z_1)|.
\end{align}
Below we  estimate $|K(x_0,x_1)-K(z_0,x_1)|$ and the estimate of the second term is similar. 
Now, let $\rho = \sigma^{44}$ and consider the following product space
$$P := B_{x_0}(\rho) \times B_{x_1}(\rho) \times B_{z_0}(\rho).$$ 
Let $\mu^3$ denote the product measure on $P$, and  $\Gamma_{xy}$ denote a minimizing geodesic joining $x$ and $y$. We first construct a full-measure subset $W_0$ of $P$ on which the required first variation formulas can be applied, and $\grad r$ exists a.e. on the geodesics connecting the perturbed points chosen from $W_0$.

By the segment inequality\cite[Theorem 2.11]{CheegerColding1996}
\begin{equation*}
\begin{aligned}
    \int_{B_{x_0}(\rho)} \int_{B_{z_0}(\rho)} \int_{\Gamma_{a_0b_0}} 1_{\operatorname{Cut}(p)} \dd s \dd \mu(b_0) \dd \mu(a_0)  &= 0,\\
        \int_{B_{x_1}(\rho)}\int_{B_{x_0}(\rho)} \int_{B_{z_0}(\rho)} \Big(\int_{\Gamma_{a_0b_0}} 1_{\operatorname{Cut}(a_1)} \dd s \Big) \dd \mu(b_0) \dd \mu(a_0)\dd \mu(a_1)&=0.   
    \end{aligned}
\end{equation*}
Thus
\begin{equation}\label{eq:four_cut_penalty}
    \begin{aligned}
        \mu^3\Big(\left\{(a_0,a_1,b_0)\in P | \ \mathcal{H}^{1}\big(\Gamma_{a_0b_0}([0,1]) \cap \operatorname{Cut}(p)\big)> 0\right\}\Big)= 0, \\
        \mu^3\Big(\left\{ (a_0,a_1,b_0)\in P | \ \mathcal{H}^{1}\big(\Gamma_{a_0b_0}([0,1]) \cap \operatorname{Cut}(a_1)\big)> 0\right\}\Big)= 0. 
    \end{aligned}
\end{equation}
By applying the segment inequality twice,
\begin{equation}\label{eq:four_cut_estimate_3}
    \begin{aligned}
        \int_{B_{x_0}(\rho)} \int_{B_{z_0}(\rho)} \int_{\Gamma_{a_0 b_0}} \int_{B_{x_1}(\rho)} \left(\int_{\Gamma_{w a_1}} 1_{\operatorname{Cut}(p)} \dd s \right) \dd \mu(a_1) \dd \mathcal{H}^1(w) \dd \mu(b_0) \dd \mu(a_0) = 0.
    \end{aligned}
\end{equation}
So, $\operatorname{Cut}(p)$ intersections have zero
$\mathcal H^1$-measure for almost every
choice of the relevant endpoints
and almost every intermediate point
along the selected geodesics.


Let $W_0 \subset P$ be the set of all $(a_0,a_1,b_0)$ satisfying the following conditions:
\begin{enumerate}
    \item $a_0$ and $b_0$ are jointed by unique minimizing geodesic;
    \item $\mathcal{H}^1(\Gamma_{a_0b_0}([0,1]) \cap \operatorname{Cut}(p)) = 0$;
    \item $\mathcal{H}^1(\Gamma_{a_0b_0}([0,1]) \cap \operatorname{Cut}(a_1)) = 0$;
    \item For $\mathcal{H}^1$ a.e. $w \in \Gamma_{a_0b_0}([0,1])$, $\mathcal{H}^{1}(\Gamma_{wa_1}([0,1]) \cap \operatorname{Cut}(p)) = 0$.
\end{enumerate}
By \eqref{eq:four_cut_penalty},
\eqref{eq:four_cut_estimate_3},
and Fubini's theorem, we conclude that $\mu^3(P\setminus W_0)=0$.

Let $q = |\hess F - g| + |\grad F - \grad F_0|$.
For $(a_0,a_1,b_0) \in W_0$, define
\[\begin{aligned}
    A(a_0,b_0)
    &= \int_{\Gamma_{a_0 b_0}} q \dd s,
    \\
    B_0(a_0,b_0,a_1)
    &= \int_{\Gamma_{a_0 b_0}}
    \left(
        \int_{\Gamma_{w a_1}} q\dd s
    \right)
    \dd\mathcal{H}^1(w).
\end{aligned}\]
By the segment inequality \label{lem:local_relative_volume_comparison}, 
\begin{equation}\label{eq:four_A_i_estimate}
    \begin{aligned}
        \fint_{B_{x_0}(\rho)} &\fint_{B_{z_0}(\rho)} A(a_0,b_0) \dd \mu(b_0) \dd \mu(a_0)\\ &\leq C_n \frac{\Vol(B_{x_0}(\rho)) + \Vol(B_{z_0}(\rho))}{\Vol(B_{x_0}(\rho))\Vol(B_{z_0}(\rho))} \Vol(B_{p}(3/4)) \fint_{B_{p}(3/4)} q \dd \mu 
        \leq C_n \rho^{-n} \delta, 
    \end{aligned}
\end{equation}

\begin{equation}\label{eq:four_B_0_estimate}
\begin{aligned}
&\fint_{B_{x_1}(\rho)}
\fint_{B_{x_0}(\rho)}
\fint_{B_{z_0}(\rho)}
B_0(a_0,b_0,a_1)
\,\dd\mu(b_0)\dd\mu(a_0)\dd\mu(a_1)\\
&\quad\leq
C_n\rho^{-n}
\fint_{B_{x_1}(\rho)}
\fint_{B_p(3/4)}
\int_{\Gamma_{wa_1}}q\,\dd s
\,\dd\mu(w)\dd\mu(a_1)\\
&\quad\leq
C_n\rho^{-n}\left(\frac{\Vol(B_p(3/2))}{\Vol(B_{x_1}(\rho))} + \frac{\Vol(B_p(3/2))}{\Vol(B_{p}(3/4))}\right)
\fint_{B_p(3/2)}q\,\dd\mu=C_n\rho^{-2n}\delta.
\end{aligned}
\end{equation}
Let 
$  J: P \to \R,$ $J = A_0 +A_1 +B_0.$
Combining \eqref{eq:four_A_i_estimate}, \eqref{eq:four_B_0_estimate} and the Chebyshev inequality, we obtain
\begin{equation}\label{eq:four_chebyshev_estimate_of_P}
    \frac{\mu^3(\{\alpha\in P|J(\alpha) > \sigma^{44}\})}{\mu^3(P)} \leq C_n \sigma^{-44}\delta(\rho^{-2n}+ \rho^{-n})\le \frac{1}{2}
\end{equation}
Consequently, there exists a quadruple of perturbed points
\begin{equation}\label{eq:four_good_points_selection}
(\tilde{x}_0,\tilde{x}_1,\tilde{z}_0) \in \{\alpha\in W_0:J(\alpha)\le \sigma^{44}\}.  
\end{equation}
By definition of $K$ and the triangle inequality,  
\begin{equation}\label{eq:four_perturbed_K_estimate}
\begin{aligned}
    \big|K(x_0,x_1)-K(z_0,x_1)\big|&\leq \big( \big| K(x_0,x_1)-K(\tilde{x}_0,\tilde{x}_1)\big| +\big|K(\tilde{x}_0,\tilde{x}_1) - K(\tilde{z}_0,\tilde{x}_1)\big|\\ &\qquad \qquad \qquad \qquad \qquad \quad \ \ \ +
    \big| K(z_0,x_1)-K(\tilde{z}_0,\tilde{x}_1)\big|
    \big)\\
    &\leq C_n \sigma^{40} + \big| K(\tilde{x}_0,\tilde{x}_1)- K(\tilde{z}_0,\tilde{x}_1) \big|. 
\end{aligned} 
\end{equation}
Let $L:=d(\tilde{x}_0,\tilde{z}_0)$, and $\Gamma:[0,L]\to B_{p}(3/2)$ denote the minimizing geodesic connecting $\tilde{x}_0$ and $\tilde{z}_0$. By the radial excess assumption and the triangle
inequality,
we have
$r\left( \Gamma(t) \right)\geq \frac{\sigma}{2}$ for $t\in[0,L]$. Denote
\begin{equation*}
    K(t) := K(\Gamma(t), \tilde{x}_1),  \quad S(t) := d(\Gamma(t), \tilde{x}_1)^2, \quad r(t) := r(\Gamma(t)), \quad b:=r(\tilde{x}_1). 
\end{equation*}
Then
\begin{equation}\label{eq:four_S_K_relation}
    S(t) = b^2 +r(t)^2 -2br(t)K(t) \quad \text{ and } \quad \big| K(\tilde{x}_0,\tilde{x}_1)- K(\tilde{z}_0,\tilde{x}_1) \big| \leq \int_0^L |K'(t)|\dd t 
\end{equation}
Differentiating with respect to $t$ gives 
\begin{equation}\label{eq:four_S'_K'_relation}
    \begin{aligned}
        S'(t) = &2 r(t) r'(t)
        -2b\big(r'(t) K(t)+ r(t)K'(t)\big).
    \end{aligned}
\end{equation}
$\mathcal{H}^1(\Gamma_{\tilde{x}_0\tilde{z}_0}([0,1]) \cap \operatorname{Cut}(\tilde{x}_1)) = 0$ implies that, for $\mathcal{H}^1$ a.e. $w \in \Gamma_{\tilde{x}_0\tilde{z}_0}([0,1])$, the minimizing geodesic $\Gamma_{w,\tilde{x}_1}$ is unique, thus  
\begin{equation*}
    \frac{d}{\dd t} \left. d(\Gamma(t),\tilde{x}_1)\right|_t =  -\langle\Gamma'(t), T \rangle, \quad \text{for }\mathcal{H}^1 \text{ a.e. } t \in[0,L], 
\end{equation*}
where $T(t)=\Gamma_{w,\tilde{x}_1}'(0)$ for $w=\Gamma(t)$.  Calculation gives
\begin{equation}\label{eq:four_S'_first_variation}
    \begin{aligned}
        S'(t) 
        = -2 d(\Gamma(t),\tilde{x}_1) \langle \Gamma'(t), T\rangle = -2 d(\Gamma(t),\tilde{x}_1) \big(\langle \Gamma'(t) - \frac{\grad F}{r}(\Gamma(t)), T\rangle + \langle\frac{\grad F}{r}(\Gamma(t)),T \rangle \big)
    \end{aligned}
\end{equation}
for a.e. $t \in [0,L]$. 
Applying Lemma~\ref{lem:taylor_ode} with $\gamma = \Gamma_{\Gamma(t), \tilde{x}_1}$, 
\begin{equation}\label{eq:four_taylor_ode}
    d(\Gamma(t), \tilde{x}_1)\langle \grad F(\Gamma(t)), T\rangle = F(\tilde{x}_1) - F(\Gamma(t)) -\frac{S(t)}{2} - R(t).
\end{equation}
Substitute \eqref{eq:four_taylor_ode} into \eqref{eq:four_S'_first_variation} yields, 
\begin{equation*}
\begin{aligned}
    \frac{S'(t)}{2} = - \frac{1}{r(t)}\big(F(\tilde{x}_1) - F(\Gamma(t)) -\frac{S(t)}{2} - R(t) \big) - d(\Gamma(t),\tilde{x}_1) \langle \Gamma'(t) - \frac{\grad F}{r}(\Gamma(t)), T\rangle.
\end{aligned}
\end{equation*}
Since our perturbed point is chosen from $W_0$, for a.e. $w \in \Gamma([0,L])$, $\grad F_0$ exists $\mathcal{H}^1$ a.e. on $\Gamma_{w\tilde{x}_1}([0,1])$.
Let $\xi(x):= F(x)-F_0(x)$, then $\grad \xi$ exists $\mathcal{H}^1$ a.e. on $\Gamma_{w\tilde{x}_1}([0,1])$ for $\mathcal{H}^1$ a.e. $w \in \Gamma([0,L])$.
Then for a.e. $w\in \Gamma([0,L])$, 
\begin{equation}\label{eq:four_using_e_1_control_e_0}
    \begin{aligned}
        |\xi(\tilde{x}_1) - \xi(w)| \leq \int_{\Gamma_{w\tilde{x}_1}}\Big|\grad F - \grad F_0 \Big| \dd s
    \end{aligned}
\end{equation}
Thus for a.e. $t\in [0,L]$, there holds
\begin{equation*}
\begin{aligned}
    \frac{S'(t)}{2} =- \frac{1}{r(t)}\big( \frac{b^2 - r(t)^2}{2} + \xi(\tilde{x}_1)-\xi(\Gamma(t)) -\frac{S(t)}{2} - R(t) \big) - d(\Gamma(t),\tilde{x}_1) \langle \Gamma'(t) - \frac{\grad F}{r}(\Gamma(t)), T\rangle.
\end{aligned}
\end{equation*}
Let
\begin{equation}\label{eq:four_def_E_s}
    E_s := -2\frac{\xi(\tilde{x}_1) - \xi(\Gamma(t))}{r(t)}+2\frac{R(t)}{r(t)} - 2 d(\Gamma(t),\tilde{x}_1) \langle \Gamma'(t) - \frac{\grad F}{r}(\Gamma(t)), T\rangle,
\end{equation}
then by \eqref{eq:four_S_K_relation},
\begin{equation}\label{eq:four_S'_E_s}
\begin{aligned}
    S'(t) = - \frac{1}{r(t)}\big( b^2 - r(t)^2 -S(t)\big) + E_s= \frac{1}{r(t)}\big(2r(t)^2 -2br(t)K(t)\big) + E_s.
\end{aligned}
\end{equation}
Combining \eqref{eq:four_S'_E_s} with \eqref{eq:four_S'_K'_relation}, we have 
\begin{equation*}
\begin{aligned}
    &\frac{1}{r(t)}\big(2r(t)^2 -2br(t)K(t)\big) + E_s = 
    2 r(t)  r'(t) -2b\big(r'(t) K(t)+ r(t)K'(t)\big),
\end{aligned}
\end{equation*}
from which one can solve
\begin{equation*}
    \begin{aligned}
        K'(t) = \big(\frac{K(t)}{r(t)} -\frac{1}{b}\big)\big(1 - r'(t)\big) -\frac{E_s}{2b r(t)}.
    \end{aligned}
\end{equation*}

Now we estimate $\int_0^L |K'(t)|\dd t$ term by term. For the first term and second term of \eqref{eq:four_def_E_s}, since 
$r(t)\geq \sigma/2$, and by \eqref{eq:four_using_e_1_control_e_0} and  \eqref{eq:four_good_points_selection}, 
\begin{equation}\label{eq:four_E_s_1_2_estimate}
    \begin{aligned}
    \int_0^L \frac{|\xi(\tilde{x}_1) - \xi(\Gamma(t))|}{r(t)} \dd t &\leq C_n\sigma^{-1}\int_{\Gamma_{\tilde x_0\tilde z_0}}
\int_{\Gamma_{w\tilde x_1}}
|\nabla F-\nabla F_0|\,ds\,d\mathcal H^1(w)\leq C_n \sigma^{-1} \Lambda,\\
        \int_0^L \frac{|R(t)|}{r(t)} \dd t &\leq C_n\sigma^{-1} \int_0^L |R(t)|\dd t \leq C_n \sigma^{-1}\Lambda. 
        \end{aligned}
\end{equation}
For the third term of \eqref{eq:four_def_E_s}, since $d(\Gamma(t), \tilde{x}_1)\leq C$,
\begin{align}
        \int_0^L\Big| d(\Gamma(t),\tilde{x}_1) \langle \Gamma'(t) - \frac{\grad F}{r}(\Gamma(t)), T\rangle\Big|\dd t &\leq C_n \int_0^L |\Gamma'(t) - \frac{\grad F}{r}(\Gamma(t))|\dd t \notag\\
        &\leq C_n \int_0^L \big|\Gamma'(t) - \grad r (\Gamma(t))\big| + \big|\frac{\grad F_0-\grad F}{r}(\Gamma(t))\big|\dd t\notag\\
        &\leq C_n\left( \int_0^L |\Gamma'(t) - \grad r (\Gamma(t))| \dd t + \sigma^{-1} \Lambda\right)\notag\\
    &\leq C_n\left( L^{\frac{1}{2}} \left(\int_0^L|\Gamma'(t) - \grad r (\Gamma(t))|^2 \dd t\right)^{\frac{1}{2}} \sigma^{-1}\Lambda\right)\notag\\
    &\leq C_n  \left(\int_0^L2\Big(1 - \big(r(t)\big)'\Big)\dd t\right)^{\frac{1}{2}}+C_n\sigma^{-1}\Lambda\notag\\ 
    &= C_n  \left(L-[r(L) -r(0)]\right)^{\frac{1}{2}}+C_n\sigma^{-1}\Lambda\notag\\
    &=C_n e(\tilde{x}_0,\tilde{z}_0)^{\frac{1}{2}}+C_n\sigma^{-1}\Lambda \leq C_n \left(\sigma^{\frac{43}{2}}+\sigma^{-1}\Lambda \right),\label{eq:four_E_s_third_estimate}
\end{align}
Combining \eqref{eq:four_E_s_1_2_estimate} and \eqref{eq:four_E_s_third_estimate}, we obtain 
\begin{equation}\label{eq:four_E_s_estimate}
    \begin{aligned}
        \int_0^L\frac{|E_s|}{b r(t)}\dd t \leq C_n \sigma^{-2}\left(\sigma^{-1}\Lambda +\Lambda +\sigma^{\frac{43}{2}}\right) \leq C_n (\sigma^{-3}\Lambda +\sigma^{\frac{39}{2}}). 
    \end{aligned}
\end{equation}
By the triangle inequality, $|K(t)|\leq 1$. Thus 
\begin{equation}\label{eq:four_int_K'_estimate_prepare}
\begin{aligned}
    \Bigg|\int_0^L\big( \frac{K(t)}{r(t)} -\frac{1}{b} \big) \big(1 - r'(t)\big) \dd t\Bigg|
    \leq  C_n \sigma^{-1}\int_0^L 1 - r'(t) \dd t
    \leq C_n\sigma^{-1}e(\tilde{x}_0, \tilde{z}_0) \leq C_n \sigma^{42}. 
\end{aligned}
\end{equation}
Finally, by \eqref{eq:four_E_s_estimate} and \eqref{eq:four_int_K'_estimate_prepare},
    \begin{align*}
        \big|K(\tilde{x}_0,\tilde{x}_1) - K(\tilde{z}_0,\tilde{x}_1)\big| &\leq \int_0^L|K'(t)|\dd t \leq C_n(\sigma^{42}+\sigma^{\frac{39}{2}}+\sigma^{-3}\Lambda)\leq C_n \sigma^{\frac{39}{2}}.
    \end{align*}
Combining  with \eqref{eq:four_perturbed_K_estimate} and \eqref{eq:four_K_two_terms}, we obtain the conclusion.
\end{proof}

\begin{lemma}
\label{lem:radial_extension}Let $M$ be a complete non-compact Riemannian manifold and  $\mathrm{Ric\ge 0}$ on $B_p(4)\subset M$. There exist constants $\sigma_0(n)>0$ and $C_n<\infty$
such  that for any  $\sigma\in (0,\sigma_0(n))$, if there exits $F\in C^2(B_p(3))$ satisfying  
\begin{equation*}
\delta:=\fint_{B_p(3/2)}(e_1+e_2)\dd V
\le \sigma^{44(4n+1)}.
\end{equation*}
Then for every $z\in S_\sigma:=\{r=\sigma\}$ and
$ T\in [0,\frac15]$, there exists $y\in B_p(1/2)$
such that
\begin{equation*}
|r(y)-(\sigma+T)|+|d(z,y)-T|+e(z,y)
\le C_n \sigma^{44}.
\end{equation*}
\end{lemma}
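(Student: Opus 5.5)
We follow the gradient flow of $F$ from $z$. Along this flow $F$ grows like $\tfrac12 r^2$, while $\nabla F\approx r\nabla r$ forces the flow line to be almost radial. The smallness of $\delta$ only holds in an integral sense, so we first perturb $z$ to a nearby good point.

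\textbf{Step 1: choosing a good starting point.} Set $\rho=\sigma^{44}$. Consider the ball $B_z(\rho)$ and the flow $\Phi_t$ of $X:=\nabla F/|\nabla F|^2$, or alternatively of $\nabla F/r$ where $r\ge\sigma/2$. We want to estimate
\[
\fint_{B_z(\rho)}\int_0^{\tau}\bigl(e_1+e_2\bigr)(\Phi_t(a))\,dt\,d\mu(a),
\]
where $\tau$ is the time at which $r$ increases by $T$. The simpler route is to replace the flow by geodesics and use the segment inequality \cite[Theorem 2.11]{CheegerColding1996}, as in Proposition~\ref{prop:four_points_estimate}. For the target we pick a point far out: by Bishop--Gromov and $\mathrm{Ric}\ge0$ on $B_p(4)$, the set $\{r\in[1,\tfrac32]\}$ has volume comparable to $\Vol(B_p(3/2))$, and $\Vol(B_z(\rho))\ge c_n\rho^n\Vol(B_p(3/2))$. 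The segment inequality and Chebyshev then give $a\in B_z(\rho)$ and $w$ with $r(w)\in[1,\tfrac32]$ such that
\[
\int_{\Gamma_{aw}}(e_1+e_2)\,ds\le C_n\rho^{-n}\delta\sigma^{-44}\le C_n\sigma^{44(4n+1)-44n-44}=C_n\sigma^{88n}.
\]
Now apply Lemma~\ref{lem:taylor_ode} along $\Gamma_{aw}$ and compare $F$ with $F_0$ through the integral of $e_1$. Writing $\xi=F-F_0$ as in Proposition~\ref{prop:four_points_estimate}, this yields
\[
\tfrac12 r(w)^2-\tfrac12 r(a)^2=L\langle\nabla F_0(a),\Gamma'(0)\rangle+\tfrac{L^2}{2}+O(\sigma^{88n}).
\]
This needs a bound on $|\xi(w)-\xi(a)|$, which is the integral of $e_1$ along the geodesic. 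Since $\langle\nabla F_0(a),\Gamma'(0)\rangle\ge-r(a)$, we obtain $L\approx r(w)-r(a)$ up to $O(\sigma^{43})$. In other words, the excess satisfies $e(a,w)\le C_n\sigma^{43}$. We want $\sigma^{44}$, which can be reached by adjusting exponents with the ample slack available, say via $\rho=\sigma^{45}$.

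\textbf{Step 2: the desired point.} Let $y:=\Gamma_{aw}(t_0)$ with $t_0$ chosen so that $d(a,y)=T$. Almost-additivity along a nearly radial geodesic gives
\[
e(a,y)\le e(a,w),\qquad r(y)=r(a)+T-O(e(a,w)).
\]
This holds because $r$ is 1-Lipschitz and $L-(r(w)-r(a))$ controls the defect on every subsegment. Finally we use $d(z,a)\le\rho$ and $|r(a)-\sigma|\le\rho$, together with the triangle inequality, to transfer the estimates from $a$ to $z$. Also $y\in B_p(1/2)$, since $r(y)\le\sigma+\tfrac15+\rho<\tfrac12$.

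\textbf{Main obstacle.} The critical point is the Taylor step near $a$, where $r(a)\approx\sigma$ is small and $\nabla F_0$ is evaluated at a single point. Pointwise control of $\nabla F$ at $a$ is unavailable, since only $e_1$ integrated along the segment is controlled. I would instead apply Lemma~\ref{lem:taylor_ode} from the far endpoint $w$, where $r\approx1$. There a second Chebyshev choice makes $|\nabla F(w)-\nabla F_0(w)|$ small, and $\langle\nabla r(w),-\Gamma'(L)\rangle\le1$ gives the lower bound $L\ge r(w)-r(a)-O(\sigma^{\text{large}})$ in reverse. This keeps all losses polynomial in $\sigma$.
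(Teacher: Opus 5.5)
\textbf{There is a genuine gap in Step 1.} The claim $e(a,w)\le C_n\sigma^{43}$ does not follow from your argument, and it is false in general.

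Once $F\approx F_0$ is inserted, the Taylor identity along $\Gamma_{aw}$ becomes $r(w)^2=r(a)^2+2L\,r(a)\cos\phi+L^2+(\text{error})$, where $\cos\phi=\langle\nabla r(a),\Gamma'(0)\rangle$. This is just the law of cosines of a cone, and it holds for every angle $\phi$. The bound $\cos\phi\ge -1$ only gives $L\le r(w)+r(a)$, i.e.\ $e(a,w)\le 2r(a)\approx 2\sigma$. That is the triangle inequality, and it is useless against the target $\sigma^{44}$.

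A concrete check: take $M=\mathbb{R}^n$ and $F=F_0=\frac{|x|^2}{2}$, so $e_1=e_2=0$ and $\delta=0$. Your segment-inequality/Chebyshev selection then imposes no constraint, and $w=-z/|z|$ is an admissible choice. It gives $e(a,w)\approx 2\sigma$ and $r(\Gamma_{aw}(T))\approx |T-\sigma|$, far from $\sigma+T$; for a generic $w$ the excess is still of order $\sigma$.

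Your fallback of expanding from the far endpoint $w$ has the same defect. It yields the lower bound $L\ge r(w)-r(a)-(\text{small})$, which is trivially true. What is actually needed is the upper bound $L\le r(w)-r(a)+C_n\sigma^{44}$, and neither expansion can give it.

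\textbf{The missing ingredient is control of the initial direction.} The segment inequality averages over pairs of endpoints, so it cannot force the curve to leave the starting point almost along $\nabla r$. Knowing in advance which endpoints $w$ are radially placed is circular, since that is the content of the lemma.

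The paper works on the unit tangent bundle instead:
\begin{itemize}
\item It restricts to base points $w\in B_\rho(z)$ with $e_1(w)\le\rho$ and directions $v$ with $\angle(v,\nabla r(w))\le\rho$. This set has Liouville measure at least $c_n\rho^{2n-1}\Vol(B_p(3/2))$.
\item Invariance of Liouville measure under the geodesic flow, together with Chebyshev, produces one $(w,v)$ with $e_1(\gamma(T))\le\rho$ and $\int_0^T e_2(\gamma(t))\dd t\le\rho$. The geodesic $\gamma$ need not be minimizing.
\item Transporting $V(t)=\nabla F(\gamma(t))-(r(w)+t)\dot\gamma(t)$, which satisfies $|V'|\le e_2$, gives $|r(y)-(r(w)+T)|\le 4\rho$ at $y=\gamma(T)$.
\item Combined with $d(w,y)\le T$ and $d(w,z)\le\rho$, this gives the distance and excess bounds.
\end{itemize}

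Your Step 2 (subsegment monotonicity of the excess) and the transfer from $a$ to $z$ would be fine once such a nearly radial curve is available. One further point: geodesics from $a$ to points with $r(w)$ close to $3/2$ can leave $B_p(3/2)$, where $\delta$ gives no control, so the target annulus would also have to be shrunk.
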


\begin{proof} Set $\rho=\sigma^{44} ,V_0:=\Vol(B_p(3/2))$ and fix
 $z\in S_\sigma$ and $T\in[0,1/5]$.
 Define
\[
G:=
\left\{
w\in B_\rho(z)\setminus
(\operatorname{Cut}(p)\cup\{p\}):
e_1(w)\le\rho
\right\}.
\]
 Chebyshev's inequality gives
\begin{align*}
\frac{\Vol(B_\rho(z)\setminus G)}
     {\Vol(B_\rho(z))}
&\le
\frac{1}{\rho\Vol(B_\rho(z))}
\int_{B_\rho(z)}e_1\dd V \le C_n\delta\rho^{-n-1} \le C_n\rho^{3n}.
\end{align*}
Taking $\sigma_0(n)$ sufficiently small, we obtain
$
\Vol(G)\ge\frac12\Vol(B_\rho(z))$.
For each $w\in G$, consider
\[
\mathcal C_w:=
\left\{
v\in S_wM:
\angle(v,\nabla r(w))\le\rho
\right\}.
\]
Its spherical measure satisfies $\sigma_w(\mathcal C_w)\ge c_n\rho^{n-1}$. Let $\lambda$ denote the Liouville measure
on the unit tangent bundle $SM$, normalized by $\dd\lambda(w,v)=\dd\sigma_w(v)\dd V(w)$.
Define  $$\mathcal A:=
\left\{
(w,v)\in SM:
w\in G,\ v\in\mathcal C_w
\right\}.$$
Then volume comparison theorem imlies
\begin{equation}
\begin{aligned}
\lambda(\mathcal A)
&=\int_G\sigma_w(\mathcal C_w)\dd V(w)
\ge c_n\rho^{n-1}\Vol(G)
\ge c_n\rho^{2n-1}V_0.
\label{eq:extension-Liouville-size}
\end{aligned}
\end{equation}
For $(w,v)\in\mathcal A$, let
$\gamma_{w,v}:[0,T]\to M$ be the unit-speed
geodesic satisfying
\[
\gamma_{w,v}(0)=w,
\qquad
\dot\gamma_{w,v}(0)=v.
\]
Since $r(w)\le \sigma+\rho$, we have, for every $t\le T\le \frac 15$,
\[
r(\gamma_{w,v}(t))
\le r(w)+t
\le \sigma+\rho+\frac15<\frac12.
\]
Let $\phi_t:SM\to SM $ be the geodesic flow. Then, $\pi(\phi_T(\mathcal A))\subset B_p(1/2)$, and the  Liouville measure is invariant under $\phi_t$, wehre $\pi:SM\to M$ is the bundle projection. So, 
\begin{equation}
\begin{aligned}
\int_{\mathcal A}
e_1(\gamma_{w,v}(T))\dd \lambda
&=
\int_{\phi_T(\mathcal A)}
e_1(\pi(\xi))\dd \lambda(\xi)
\le
|\mathbb S^{n-1}|
\int_{B_p(1/2)}e_1\dd V
\le C_n\delta V_0.
\label{eq:extension-terminal-e1}
\end{aligned}
\end{equation}
Similarly, by Fubini-Tonelli's theorem and
Liouville invariance,
\begin{align}
\int_{\mathcal A}
\int_0^T e_2(\gamma_{w,v}(t))\dd t\dd
\lambda(w,v)
=
\int_0^T
\int_{\mathcal A}
e_2(\gamma_{w,v}(t))\dd \lambda\dd t
\le C_n\delta V_0.
\label{eq:extension-flow-e2}
\end{align}
 Combining
\eqref{eq:extension-Liouville-size},
\eqref{eq:extension-terminal-e1},
and \eqref{eq:extension-flow-e2},
Chebyshev's inequality shows that the relative
measure of the set where either
\[
e_1(\gamma_{w,v}(T))>\rho
\quad
\text{or} \quad
\int_0^T e_2(\gamma_{w,v}(t))\dd t>\rho
\]
is at most
$
C_n\delta\rho^{-2n}\le C_n\rho^{2n+1}<\frac12.
$ Moreover, since $\operatorname{Cut}(p)\cup\{p\}$ has zero volume and $\phi_T$ preserves $\lambda$,
the set of initial conditions for which
$\gamma_{w,v}(T)\in\operatorname{Cut}(p)\cup\{p\}$
has zero Liouville measure.
So, there exists
$(w,v)\in\mathcal A$ such that,$y:=\gamma(T)\notin\operatorname{Cut}(p)\cup\{p\}$ and $\gamma:=\gamma_{w,v}$ satisfy
\begin{equation}\label{eq:extension-good-geodesic}
\begin{aligned}
e_1(w)\le\rho, \quad 
e_1(y)\le\rho, \quad
|v-\nabla r(w)|&\le\rho, \quad
d(w,z)\le\rho,\quad 
\int_0^T e_2(\gamma(t))\dd t&\le\rho.
\end{aligned}
\end{equation}
Denoting 
$
V(t):=
\nabla F(\gamma(t))
-(r(w)+t)\dot\gamma(t),
$ Then, 
\begin{equation*}
\left|V'(t)\right|=
\left|(\hess F)^\sharp(\dot\gamma(t))
-\dot\gamma(t)\right|
=
\left|(\hess F-g)^\sharp(\dot\gamma(t))\right|\le e_2(\gamma(t)).
\end{equation*}
Integrating this, we obtain
\begin{align*}\label{eq:extension-gradient-transport}
\left|
\nabla F(y)-(r(w)+T)\dot\gamma(T)
\right|
&\le |V(0)|+
\int_0^T e_2(\gamma(t))\dd t\le |\nabla F(w)-r(w)v|+\rho\\
&\le e_1(w)+r(w)\rho+\rho\le 3\rho.
\end{align*}
Since $y\notin\operatorname{Cut}(p)\cup\{p\}$, we know $|\nabla r(y)|=1$ and
\begin{align*}
\left|
r(y)\nabla r(y)
-(r(w)+T)\dot\gamma(T)
\right|
\le
e_1(y) +
|\nabla F(y)-(r(w)+T)\dot\gamma(T)|\le4\rho.
\end{align*}
Noting $|\dot\gamma(T)|=|\nabla r(y)|=1$, we get
\begin{equation}
|r(y)-(r(w)+T)|\le4\rho.
\end{equation}
Combining this with $ d(w,z)\le\rho$, the triangle inequality implies 
\begin{align*}
|d(y,z)-T|\le |d(y,w)-T|+d(z,w)\le T-d(y,w)+\rho\le T+r(w)-r(y)+\rho\le 5\rho.
\end{align*}
and 
\begin{align*}
|r(y)-(\sigma+T)|\le d(z,w)+|r(y)-(r(w)+T)|\le 5\rho.
\end{align*}
So,  the radical excess satisfies
\begin{equation*}
0\le e(z,y)
=d(z,y)-[r(y)-\sigma] \le (T+\rho)-(T-5\rho) =6\rho.
\end{equation*}
Collecting the last three estimates, we obtain the conclusion.
\end{proof}
\begin{proof}[Proof of Theorem \ref{thm:three}] 
Set $\delta_n=(\sigma_0(n))^{44(4n+1)}$ and $\sigma=\delta^{\frac{1}{44(4n+1)}}$. By Proposition \ref{prop:four_points_estimate} and Lemma \ref{lem:radial_extension}, we know for any  $x_i, z_i \in B_{p}(\frac{1}{2})\backslash B_p(\sigma)$ with $e(x_i,z_i)\leq C_0 \sigma^{43}, i =0, 1$, 
there holds 
\begin{equation}\label{eq:four_K_estimate}
    |K(x_0,x_1)-K(z_0,z_1)|\leq C_n \sigma^{18}.
\end{equation}
and for every $z\in Z:= S_\sigma=\{r=\sigma\}$ and
$ T\in [0,\frac15]$, there exists $y\in B_p(1/2)$
such that
\begin{equation}\label{eq:extension-radial-defect}
|r(y)-(\sigma+T)|+e(z,y)
\le C_n \sigma^{44}.
\end{equation}
\textbf{Step 1.}
For $z,w\in Z$, define
\begin{equation}\label{eq:chain-metric}
\widehat\ell_\sigma(z,w):=\frac1\sigma\inf\left\{\sum_{i=1}^N d(z_{i-1},z_i):
z_0=z, z_N=w, d(z_{i-1},z_i)\le \sigma^{3}, 1\le i\le N\right\},
\end{equation}
and  $d_Z(z,w):=\min\{\pi,\widehat\ell_\sigma(z,w)\}$. Then, $d_Z$ is  a metric such that $\mathrm{diam}_{d_Z}(Z)\le\pi$. Noting that   $d(z,w)\le \sigma^{3}$ implies 
$
 d_Z(z,w)=\frac{d(z,w)}\sigma,
$
we know  $d$ and $d_Z$ gives the same topology, which implies $(Z,d_Z)$ is compact. 
Construct the metric cone over $Z$ as following:
\begin{equation*}
 C(Z)=([0,\infty)\times Z)/(\{0\}\times Z),\qquad
 d_C((s,z),(t,w))=\sqrt{s^2+t^2-2st\cos d_Z(z,w)}.
\end{equation*}
Denote the cone vertex by $o$. Let
\begin{equation*}
f:B_p(1/5)\longrightarrow B_o^{C(Z)}(1/5),\qquad
 f(x):=\begin{cases}
 o,&r(x)\le \sigma,\\
 (r(x),\bar x),&r(x)>\sigma.
\end{cases}
\end{equation*}
Here $\bar x=\gamma_{px}(\sigma)\in Z$ for  some(arbitrary chosen) minimizing geodesic $\gamma_{px}$ from $p$ to $x$. Then
\begin{equation}\label{eq:exactradial}
d(\bar x,x)=r(x)-\sigma,\qquad e(\bar x,x)=0.
\end{equation}
\textbf{Step 2. Prove $d_C(f(x),f(y))\le d(x,y)+C_N\sigma.$}

Take $x,y\in B_p(1/5)$. If $\min\{r(x),r(y)\}\le2\sigma$, then, 
\[
 d_C(f(x),f(y))\le r(x)+r(y)
 \le d(x,y)+2\min\{r(x),r(y)\}\le d(x,y)+4\sigma.
\]
Next we  assume $r(x),r(y)>2\sigma$.
Take a  minimizing geodesic $\gamma:[0,L]\to M$ joining $x$ to $y$. If there exists $u\in\gamma$ such that  $r(u)\le2\sigma$, then
\[
d_C(f(x),f(y))\le  r(x)+r(y)\le d(x,u)+d(u,y)+2r(u)=d(x,y)+4\sigma.
\]
So, we can assume that $r(\gamma(t))>2\sigma$ for any $t\in [0,L]$.
Choose
\[
 0=t_0<t_1<\cdots<t_N=L,\qquad x_i=\gamma(t_i),
\]
such that
$
  t_{i+1}-t_i\le\frac{\sigma^{3}}{50}$ and  $
 N\le C \sigma^{-3}.
$
For every $x_i$, define $z_i:=\bar x_i$. By \eqref{eq:exactradial}, $(z_i,x_i)$ and $(z_{i+1},x_{i+1})$ are two exact radial pairs. Thus \eqref{eq:four_K_estimate} holds, i.e., 
\[
|K(z_i,z_{i+1})-K(x_i,x_{i+1})|\le C_n\sigma^{18}.
\]
Write $R_i=r(x_i)$ and $d_i=d(x_i,x_{i+1})$. Then
\[
\frac{d(z_i,z_{i+1})^2}{2\sigma^2}
 =1-K(z_i,z_{i+1})\le 1-K(x_i,x_{i+1})+C_n\sigma^{18}=\frac{d_i^2-(R_i-R_{i+1})^2}{2R_iR_{i+1}}+C_n\sigma^{18}\le\frac{d_i^2}{8\sigma^2}+C_n\sigma^{18}.
\]
Hence
$
d(z_i,z_{i+1})\le\frac12d_i+C_n\sigma^{10}\le \sigma^{3}.
$
 Thus, $d_Z(z_i,z_{i+1})=\frac{d(z_i,z_{i+1})}{\sigma}$ and the local angle is exactly
$
\theta_i:=d_Z(z_i,z_{i+1})\le \sigma^2.
$
Note that 
\begin{equation*}
K(z_i,z_{i+1})=1-\frac{d(z_i,z_{i+1})^2}{2\sigma^2}=1-\frac{\theta_i^2}{2}\qquad \text{ and } \qquad    \left|\cos\theta_i-\left(1-\frac{\theta_i^2}{2}\right)\right|
 \le C\theta_i^4\le C\sigma^8.
\end{equation*}
By the definition of $K$ and $d_C$, we obtain
\begin{equation*}
\left|d(x_i,x_{i+1})^2-d_C(f(x_i),f(x_{i+1}))^2\right|=2R_iR_{i+1}\left|\cos\theta_i-K(z_i,z_{i+1})\right|\le C_n\sigma^8.
\end{equation*}
By  $|\sqrt A-\sqrt B|\le\sqrt{|A-B|}$, we get 
\[
\begin{aligned}
 \left|d(x_i,x_{i+1})-d_C(f(x_i),f(x_{i+1}))\right|
 \le C_n\sigma^4.
\end{aligned}
\]
Finally, using the triangle inequality in the cone and the minimizing property of $\gamma$,
\begin{equation*}
\begin{aligned}
 d_C(f(x),f(y))
 \le\sum_{i=0}^{N-1}d_C(f(x_i),f(x_{i+1}))\le\sum_{i=0}^{N-1}d(x_i,x_{i+1})+C_nN\sigma^4\le d(x,y)+C_n\sigma.
\end{aligned}
\end{equation*}

\textbf{Step 3. Prove $d(x,y)\le d_C(f(x),f(y))+C_n\sigma$.}

Take $x,y\in B_p(1/5)$. If $\min\{r(x),r(y)\}\le 2\sigma$, noting $d_C((s,z),(t,w))\ge |s-t|$, then  
\[d(x,y) \leq r(x)+r(y) \leq d_C(f(x),f(y)) + 2\min\{r(x),r(y)\}\leq d_C(f(x),f(y)) +4\sigma. \]
Thus we only need to  consider $R:=r(x)>2\sigma$ and $S:=r(y)>2\sigma$. Let $z=\bar x$ and $w=\bar y$.

If $d_Z(z,w)=\pi$, then 
$
 d_C(f(x),f(y))=\sqrt{R^2+S^2+2RS}=R+S\ge d(x,y).
$
Therefore, we can assume $0\le\Theta:=d_Z(z,w)<\pi$ without loss of generality.
By \eqref{eq:chain-metric}, for any $\eta>0$ with $\eta<\pi-\Theta$, one may choose  $ z=z_0,z_1,\ldots,z_N=w$ such that
\begin{equation}\label{eq:prech}
 \qquad d(z_{j-1},z_j)\le \sigma^{3},
 \qquad\frac1\sigma\sum_{j=1}^N d(z_{j-1},z_j)\le\Theta+\eta.
\end{equation}
If $\max\{d(z_{j-1},z_j),d(z_j,z_{j+1})\}$, delete $z_j$. Then, by induction, without loss of generality,  we can  assume 
$$\max{d(z_{j-1},z_j), d(z_j,z_{j+1})}\ge \frac{\sigma^3}{4}\qquad \text{ hence } \qquad N\le \frac{C}{\sigma^2}.$$
 Set
\begin{equation}\label{eq:theta_i}
 d_i:=d(z_{i-1},z_i),\qquad \theta_i:=\frac{d_i}{\sigma},
 \qquad \vartheta_j:=\sum_{i=1}^j\theta_i,\quad\vartheta_0=0.
\end{equation}
Then $0\le\theta_i\le \sigma^2$, $\Theta\le\vartheta_N\le\Theta+\eta<\pi$, $N\le C\sigma^{-2}$.
In $\mathbb R^2$, take the points in polar coordinates $P_0=(R,0)$, $P_N=(S,\vartheta_N)$. Join them by the straight chord $\overline{P_0P_N}$, and write
\begin{equation*}
 D_\eta:=|P_0-P_N|
 =\sqrt{R^2+S^2-2RS\cos\vartheta_N}.
\end{equation*}
As $\eta\to0$, we have $\vartheta_N\to\Theta$. So,
\begin{equation}\label{eq:Dto}
 D_\eta\longrightarrow \sqrt{R^2+S^2-2RS\cos\Theta}
 =d_C(f(x),f(y)).
\end{equation}
If there exists  $Q\in \overline{P_0P_N}$ such that $|Q|\le2\sigma$, then  $R\le|P_0-Q|+2\sigma$, $S\le|P_N-Q|+2\sigma$, and hence $ d(x,y)\le R+S\le D_\eta+4\sigma$. Letting $\eta\to 0$, we get $$d(x,y)\le d_C(f(x),f(y))+4\sigma.$$ 
So, in the following, we can  assume 
\begin{equation}\label{eq:chordaway} |Q|>2\sigma,\qquad \forall Q\in\overline{P_0P_N}.
\end{equation}
For each $j=0,\ldots,N$, the ray $\{(t,\vartheta_j):t>0\}\subset \mathbb{R}^2$ meets $\overline{P_0P_N}$ at exactly one point
\[
 P_j=(R_j,\vartheta_j),\qquad R_0=R,\quad R_N=S.
\]
Moreover, \eqref{eq:chordaway} implies 
$
 2\sigma<R_j\le\max\{R,S\}\le1/5.
$
Denote
\begin{equation*}
 D_i:=|P_{i-1}-P_i|=
 \sqrt{R_{i-1}^2+R_i^2-2R_{i-1}R_i\cos\theta_i}.
\end{equation*}
Then $\sum_{i=1} ^ND_i=D_\eta$. 
For $z_j\in Z$ and $1\le j\le N-1$, let  $T_j=R_j-\sigma\in[0,1/5]$,  there exists $\widehat x_j\in B_p(1/2)$ such that \eqref{eq:extension-radial-defect} holds, that is, 
\begin{equation}\label{eq:xhat}
|r(\widehat x_j)-R_j|+e(z_j,\widehat x_j)\le C_n\sigma^{43} \qquad \forall j\in [1,N-1].
\end{equation}
Denote $\hat{x}_0=x$, $\hat{x}_{N}=y$, then by $z_0=z=\bar{x}$ and $z_N=w=\bar{y}$, we know  
$$|r(\hat{x}_0)-R_0|+|r(\hat{x}_N)||+e(\hat{x}_0,z_0)+e(\hat{x}_N,z_N)=0.$$  Thus \eqref{eq:four_K_estimate}  holds for  $(z_{i-1},\widehat x_{i-1})$ and $(z_i,\widehat x_i)$. More precisely,  
\begin{equation}\label{eq:Kchord}
|K(\widehat x_{i-1},\widehat x_i)-K(z_{i-1},z_i)|\le C_n\sigma^{18}.
\end{equation}
By  $r(z_{i-1})=r(z_i)=\sigma$ and the definition  \eqref{eq:theta_i} of $\theta_i$, we get
\[
 K(z_{i-1},z_i)=1-\frac{d_i^2}{2\sigma^2}=1-\frac{\theta_i^2}{2}.
\]
By the definition of $K$ and $D_i$, we have 
\begin{align*}
 \bigl|d(\widehat x_{i-1},\widehat x_i)^2-D_i^2\bigr| &=\bigl|\,r(\widehat x_{i-1})^2+r(\widehat x_i)^2
 -2r(\widehat x_{i-1})r(\widehat x_i)K(\widehat x_{i-1},\widehat x_i)
 -\bigl(R_{i-1}^2+R_i^2-2R_{i-1}R_i\cos\theta_i\bigr)\,\bigr|\\
 &\le C_n\max_{0\le j\le N}|r(\hat{x}_{j})-R_j|+C_n\left(|K(\widehat x_{i-1},\widehat x_i)-K(z_{i-1},z_i)|+\left|\cos\theta_i-\left(1-\frac{\theta_i^2}{2}\right)\right|\right)\\
 &\le \sigma^{43}+C_n\sigma^{18}+C_n\sigma^8\le C_n\sigma^8,
\end{align*}
where we use \eqref{eq:xhat}\eqref{eq:Kchord} and $\theta_i\le \sigma^2$ in the last line. 
By  $|\sqrt A-\sqrt B|\le\sqrt{|A-B|}$, we get 
\begin{equation*}
|d(\widehat x_{i-1},\widehat x_i)-D_i|\le\sqrt{C_n\sigma^8}\le C_n\sigma^4.
\end{equation*}
 Thus
\begin{align}\label{eq:sumchord}
d(x,y)\le \sum_{i=1}^N d(\widehat x_{i-1},\widehat x_i)
 \le \sum_{i=1}^N D_i+C_nN\sigma^4\notag\le D_\eta+C_n \sigma^2.
\end{align}
Letting $\eta\to 0$ and using \eqref{eq:Dto} gives
\begin{equation}\label{eq:lower}
 d(x,y)\le d_C(f(x),f(y))+C_n\sigma^2\le d_C(f(x),f(y))+C_n\sigma.
\end{equation}
\textbf{Step 4.}
Now  Steps 2 and 3 implies that for all $x,y\in B_p(1/5)$,
\begin{equation}\label{eq:distortion}
\bigl|d(x,y)-d_C(f(x),f(y))\bigr|\le C_n\sigma.
\end{equation}
It remains to show that $f(B_p\left(\frac{1}{10}\right))$ is $C_n\sigma$-dense in $B_o^{C(Z)}\left(\frac{1}{10}\right)$.
Fix any $\xi=(R,z)\in B_o^{C(Z)}\left(\frac{1}{10}\right)$. If $R\le2\sigma$, take $x=p$, so $f(p)=o$, and hence
\[
d_C(\xi,f(p))=R\le2\sigma.
\]
So, we can assume $R>2\sigma$. Applying \eqref{eq:extension-radial-defect} for  $T=R-\sigma\in[0,1/5]$ gives $y\in B_p(1/2)$ such that
\begin{equation*}
|r(y)-R|+e(z,y)\le C_n\sigma^{43}.
\end{equation*}
Since $R\le\frac{1}{10}$,  $r(y)\le \frac{1}{10}+C_n\sigma^{43}<1/5$. Let $\bar y=\gamma_{py}(\sigma)$ be the  projection  fixed in the definition of  $f$. Since $e(z,y)\le C_n\sigma^{43}$, $ e(\bar y,y)=0$, applying \eqref{eq:four_K_estimate} to the two pairs $(z,y),(\bar y,y)$ gives $|K(z,\bar y)-K(y,y)|\le C_n\sigma^{18}$.
By $K(y,y)=1$ and $r(z)=r(\bar y)=\sigma$, the definition of $K$ gives
\begin{equation*}
d(z,\bar y)=\sqrt{2\sigma^2[1-K(z,\bar y)]}\le C_n\sigma^{10}.
\end{equation*}
Noting $C_n \sigma^{10}<\sigma^3$ for $\delta_n$ small enough,  the definition of $d_Z$ implies
\begin{equation*}
d_Z(z,\bar y)=\frac{d(z,\bar y)}\sigma\le C_n\sigma^9.
\end{equation*}
Therefore, the definition of $d_C$ gives
\begin{align*}
d_C\bigl((R,z),(R,\bar y)\bigr)=2R\sin\frac{d_Z(z,\bar y)}2\le C_n\sigma^9.
\end{align*}
Using $d_C(f(y),(R,\bar{y}))\le |r(y)-R|$, we know
\begin{align}
 d_C\bigl((R,z),f(y)\bigr)
 \le |r(y)-R|+d_C\bigl((R,z),(R,\bar y)\bigr)\le C_n\sigma^{43}+C_n\sigma^9\le C_n\sigma^9.\label{eq:surj-temp}
\end{align}
If $r(y)\le\frac{1}{10}$, take $x=y$ and we are done. If $r(y)>\frac{1}{10}$, choose $x=\gamma_{py}(\frac{1}{10})$. Then
\begin{equation*}
d(x,y)=r(y)-\frac{1}{10}\le |r(y)-R|\le C_n\sigma^{43}.
\end{equation*}
By  Step 2, we know 
$
d_C(f(x),f(y))\le d(x,y)+C_n\sigma\le C_n\sigma.
$
Combining with \eqref{eq:surj-temp} yields
\begin{equation*}
 d_C(\xi,f(x))\le d_C((R,z),f(y))+d_C(f(y),f(x))\le C_n\sigma.
\end{equation*}
 By the definition of pointed Gromov-Hausdorff distance, we know
$$d_{GH}\left(B_p\left(\frac{1}{10}\right),B^{C(Z)}_o\left(\frac{1}{10}\right)\right)
\leq C_n\sigma= C_n\delta^{\frac{1}{44(4n+1)}}.$$
\end{proof}

\section{Proof of Theorem \ref{main}}\label{sec:Proof of the Main Theorem}

Define
\begin{equation*}
 \Theta_R(p):=\inf_{C(Z)}\frac1R\,
 d_{GH}\left(B_p(R),B_o^{C(Z)}(R)\right).
\end{equation*}
Here the infimum is taken over all metric cone  $C(Z)$ whose cross section $(Z,d_Z)$ is a compact metric space with $\operatorname{diam}Z\le\pi$.

\begin{proposition}\label{prop:cone} Let $(M^{n\ge 3},g)$ be a non-parabolic manifold and with  $\operatorname{Ric}\geq0$, $p\in M$ and assume $G_p$ be the minimal normalized Green function satisfying \eqref{eq:intro_green_expansion}
 Then there exists $C_n<\infty$ such that
 \begin{equation}\label{eq:coneestimate}
 \Theta_R(p)\le C_n\bigl(H_pR^{n-2}\bigr)^{\alpha_n}, \qquad  \quad   \alpha_n:=[88(n+1)(4n+1)]^{-1}.
 \end{equation}
\end{proposition}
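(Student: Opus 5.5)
The plan is to combine Theorem~\ref{thm:quantitative_conical_function} with Theorem~\ref{thm:three} after rescaling, and to handle the case of large defect trivially. The exponent $\alpha_n=\frac{1}{2(n+1)}\cdot\frac{1}{44(4n+1)}$ is exactly the product of the exponents in these two results, which indicates that this composition is the intended route.

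\textbf{Step 1 (large defect).} If $\varepsilon_{R}:=H_pR^{n-2}$ is not small, the claim is trivial. Taking $Z$ to be a point (diameter $0\le\pi$), $B_o^{C(Z)}(R)$ is an interval $[0,R)$. Both $B_p(R)$ and this interval have diameter at most $2R$, so $\Theta_R(p)\le 2$ (or use $\Theta_R(p)\le 3$ after pointing). Hence whenever $\varepsilon_R\ge c_n$ we get $\Theta_R(p)\le 3\le 3c_n^{-\alpha_n}\varepsilon_R^{\alpha_n}$. We may therefore assume $\varepsilon_R\le c_n$ with $c_n$ as small as needed.

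\textbf{Step 2 (rescale and build the conical function).} Set $\lambda=10R$ and consider $\tilde g=\lambda^{-2}g$. Then $\tilde b=\lambda^{-1}b$ is the $b$-function for $\tilde g$, since $\tilde G=\lambda^{n-2}G$ is again normalized with $\tilde H_p=\lambda^{n-2}H_p$. The quantity $\tilde\varepsilon_{\tilde\rho}=\tilde H_p\tilde\rho^{n-2}$ is scale invariant, so $\tilde\varepsilon_{s}=\varepsilon_{\lambda s}$. Apply Theorem~\ref{thm:quantitative_conical_function} to $(M,\tilde g)$ at radius $3/2$, which requires $\tilde\varepsilon_{12}=\varepsilon_{120R}\le\varepsilon(n)$, a fixed multiple of $\varepsilon_R$. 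Cauchy--Schwarz turns the $L^2$ Hessian bound into an $L^1$ bound, and this gives
\[
\delta:=\fint_{\tilde B_p(3/2)}(e_1+e_2)\,\dd\tilde V\le C_n\,\varepsilon_{120R}^{\frac{1}{2(n+1)}}\le C_n\,\varepsilon_R^{\frac{1}{2(n+1)}}.
\]
The function $\tilde F=\tilde b^2/2$ lies in $C^2$ away from $p$, and $\tilde b^2$ is in fact smooth near $p$ by the expansion, so the regularity needed in Section~\ref{sec:Quantitative Metric Cone via conical function} holds. If necessary, the proof there only uses $F$ off a small ball around $p$ together with integrability of $e_1,e_2$.

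\textbf{Step 3 (apply the cone theorem and scale back).} Shrinking $c_n$ ensures $\delta\le\delta_n$. Theorem~\ref{thm:three} on $(M,\tilde g)$ then gives a cone $C(Z)$ with
\[
d_{GH}\bigl(\tilde B_p(1/10),B_o^{C(Z)}(1/10)\bigr)\le C_n\delta^{\frac{1}{44(4n+1)}}\le C_n\varepsilon_R^{\alpha_n}.
\]
Scaling back by $\lambda$ turns $\tilde B_p(1/10)$ into $B_p(R)$ and the cone ball into $B_o^{C(Z)}(R)$, since cones are dilation invariant. Dividing by $R$ yields \eqref{eq:coneestimate}.

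The main obstacle is bookkeeping rather than analysis. One must track exactly which scale appears in the smallness hypothesis: Theorem~\ref{thm:quantitative_conical_function} uses $\varepsilon_{8\rho}$, and the ball $B_p(3/2)$ together with the $B_p(4)$ hypothesis of Theorem~\ref{thm:three} must fit. This only alters constants through powers of the fixed factor $120^{(n-2)\alpha_n}$. One must also confirm that the regularity of $F$ at $p$ is adequate for Theorem~\ref{thm:three}. I would check the latter first, using $b^2=r^2+O(r^n)$ in $C^2$ near $p$.
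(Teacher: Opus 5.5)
Your proposal follows the paper's proof: rescale by $10R$, apply Theorem~\ref{thm:quantitative_conical_function} at radius $3/2$ (which needs $\tilde\varepsilon_{12}=\varepsilon_{120R}$ small), pass from the $L^2$ Hessian bound to $L^1$ by Cauchy--Schwarz, feed the result into Theorem~\ref{thm:three}, and handle the large-defect case with the trivial bound on $\Theta_R(p)$. The only divergence is the regularity of $F$ at $p$: the paper approximates $F\in W^{2,2}_{\mathrm{loc}}$ by smooth $F_j$ and passes to the limit, whereas your claim that $\tilde b^2$ is smooth near $p$ is too strong, since for odd $n$ and $H_p>0$ one has $b^2=r^2-\frac{2H_p}{n-2}r^n+O(r^{n+1})$, which is not $C^n$ at $p$; the $C^2$ regularity you say you would check first, or your a.e.\ fallback, is all that is actually needed.
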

\begin{proof}
Let $\varepsilon_R:=H_pR^{n-2}$,  $\widetilde g:=(10R)^{-2}g$ and  $\widetilde r=(10R)^{-1}r$.
Then,
 \[
 \Theta_R(p)=\Theta_{\frac{1}{10}}^{\widetilde g}(p),
 \qquad \text{ where } \quad
 \Theta_s^{\widetilde g}(p)
 :=\inf_{C(Z)}\frac1s d_{GH}
 \left(B_p^{\widetilde g}(s),B_o^{C(Z)}(s)\right).
 \]
By scaling, the regular constant of the Green function of the normalized metric $\tilde{g}$ is
 $\widetilde H_p=(10R)^{n-2}H_p$.
 Set $\widetilde b=b/(10R)$, and take
 $F=\widetilde b^{\,2}/2$ and $F_0=\widetilde r^{\,2}/2$ with respect to $\widetilde g$. For the constant  $\delta_n$ in Theorem \ref{thm:three} and the constant $\varepsilon(n)$ in Theorem \ref{thm:quantitative_conical_function}, let $$\hat{\varepsilon}_n:=\min\left\{\varepsilon(n), \left(\frac{\delta_n}{C_n}\right)^{2(n+1)}\right\}.$$ If $\tilde{\varepsilon}_{12}:=\tilde{H}_p\cdot (12)^{n-2}\le \hat{\varepsilon}_n$, then by  Theorem \ref{thm:quantitative_conical_function} and Cauchy--Schwarz inequality, we have
 \begin{equation}\label{eq:e04}
 \delta:=\fint_{B_p(\frac{3}{2})}|\nabla F-\nabla F_0|_{\widetilde g}\dd V_{\widetilde g}+ \fint_{B_p(\frac{3}{2})}|\hess F-\widetilde g|_{\widetilde g}^2\dd V_{\widetilde g}
 \le C_n\widetilde\varepsilon_{12}^{1/[2(n+1)]}\le \delta_n.
 \end{equation}
Noting $F\in W^{2,2}_{\mathrm{loc}}(B_p(4))$. By smooth approximation, there exists $F_j\in C^\infty(B_p(4))$ such that $F_j\to F$ in $W^{2,2}_{\mathrm{loc}}$, while preserving \eqref{eq:e04} up to arbitrarily small error. Applying Theorem \ref{thm:three} to $F_j$ and then letting $j\to\infty$, we get 
 \[
 \begin{aligned}
 d_{GH}\left(B_p\left(\frac{1}{10}\right),B_o^{C(Z)}\left(\frac{1}{10}\right)\right)
 \le C_n\delta^{1/[44(4n+1)]}\le C_n\widetilde\varepsilon_{12}^{\,1/[88(n+1)(4n+1)]}=C_n\widetilde\varepsilon_{12}^{\alpha_n}.
 \end{aligned}
 \]
 If $\tilde{\varepsilon}_{12}\ge \hat{\varepsilon}_n$, we also have 
 \begin{align*}
  d_{GH}\left(B_p\left(\frac{1}{10}\right),B_o^{C(Z)}\left(\frac{1}{10}\right)\right)
 \le 2\le C_n\widetilde\varepsilon_{12}^{\alpha_n}.
 \end{align*}
Scaling back, we get  
\[\Theta_R(p) = \Theta_{\frac{1}{10}}^{\widetilde{g}}(p)\leq C_n \widetilde{\varepsilon}_{12}^{\alpha_n}\leq C_n (H_pR^{n-2})^{\alpha_n}.\]
\end{proof}

\begin{proof}[Proof of Theorem \ref{main}]
 Let
 $r_k=Re^{-k}$.
 Then
$
 H_pr_k^{n-2}=\varepsilon_Re^{-k(n-2)}.
 $
 By Proposition~\ref{prop:cone},
 \[
 \Theta_{r_k}(p)\le C_n\varepsilon_R^{\alpha_n}
 e^{-k(n-2)\alpha_n}.
 \]
 By the definition of pointed Gromov-Hausdorff distance, for any $r>0$, there holds $$d_{GH}\left(r^{-1}B_p(r),(er)^{-1}B_p(er)\right)
 \le(e+3)\Theta_{er}(p).$$
 So,
 \[
 d_{GH}\left(r_{k+1}^{-1}B_p(r_{k+1}),
 r_k^{-1}B_p(r_k)\right)
 \le C_n\varepsilon_R^{\alpha_n}e^{-k(n-2)\alpha_n}.
 \]
 Summing over $k$, we get
 \[
 d_{GH}\left(R^{-1}B_p(R),r_N^{-1}B_p(r_N)\right)
 \le C_n\varepsilon_R^{\alpha_n}
 \sum_{k=0}^{N-1}e^{-k(n-2)\alpha_n}
 \le C_n\varepsilon_R^{\alpha_n}.
 \]
 Since $M$ is smooth, $
r_N^{-1}B_p(r_N)\longrightarrow B_0^{\R^n}(1)$. Letting $N\to\infty$, we obtain 
\begin{align*}
    \frac1R d_{\mathrm{GH}} \bigl(B_p(R),B^{\mathbb{R}^n}_0(R)\bigr) \le C_n\bigl(H_pR^{n-2}\bigr)^{\alpha_n}.
\end{align*}.
\end{proof}

\appendix
\section{}
\label{app:auxiliary}

\begin{proposition}\label{prop:ricci-radial-jet}
Let $(M^n,g)$ be a smooth Riemannian manifold with $n\geq 3$,
let $p\in M$ and  $m\in \mathbb{N}$.
Assume that $\Ric\geq 0$
in a neighborhood of $p$.
If, 
\begin{equation}
    (\nabla^k\Ric)_p
    (\underbrace{\theta,\ldots,\theta}_{k\text{ derivative slots}};
    \theta,\theta)
    =
    0,
    \qquad \forall \theta\in T_pM, \quad  
    0\leq k\leq m,
    \label{eq:radial-Ricci-jet-vanishing}
\end{equation}
then 
    $ (\nabla^k\Ric)_p=0$ for $0\leq k\leq m.$
\end{proposition}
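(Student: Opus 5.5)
The plan is to argue by induction on $k\in\{0,\dots,m\}$. Assume $(\nabla^j\Ric)_p=0$ for all $j<k$ (this is empty when $k=0$), and set $T:=(\nabla^k\Ric)_p$. Then:

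\begin{enumerate}
\item \emph{Symmetry of $T$.} Commuting two covariant derivatives in $\nabla^k\Ric$ produces terms of the form $\operatorname{Rm}*\nabla^{j}\Ric$ with $j\le k-2$, together with derivatives of such terms of total order at most $k-2$ in $\Ric$. All of these vanish at $p$ by the induction hypothesis. Hence $T(a_1,\dots,a_k;v,w)$ is symmetric in the $k$ derivative slots, and it is always symmetric in $(v,w)$. The same remark lets us commute indices freely in the contracted Bianchi identity below, since every error term involves $\nabla^j\Ric(p)$ with $j<k$.

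\item \emph{Sign condition from $\Ric\ge0$.} For $\theta\in S_pM$ and $v\in T_pM$, let $\gamma_\theta(t)=\exp_p(t\theta)$, let $P_t$ be parallel transport along $\gamma_\theta$, and put $f(t)=\Ric(P_tv,P_tv)(\gamma_\theta(t))$. Since $P_tv$ and $\dot\gamma_\theta$ are parallel, $f^{(j)}(0)=(\nabla^j\Ric)_p(\theta^j;v,v)$. The induction hypothesis therefore gives
\[
f(t)=\frac{t^k}{k!}\,T(\theta^k;v,v)+O(t^{k+1}).
\]
Because $\Ric\ge0$ near $p$, we get $f(t)\ge0$ for small $t$, which forces $Q(\theta,v):=T(\theta^k;v,v)\ge0$ for all $\theta,v$.
\begin{itemize}
\item If $k$ is odd, replacing $\theta$ by $-\theta$ gives $Q\le0$ as well, so $Q\equiv0$.
\item If $k$ is even, we only know $Q\ge0$, and the two steps below are needed.
\end{itemize}

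\item \emph{Kernel condition (even $k$).} For fixed $\theta$, the map $v\mapsto Q(\theta,v)$ is a positive semidefinite quadratic form, and it vanishes at $v=\theta$ by hypothesis \eqref{eq:radial-Ricci-jet-vanishing}. So $\theta$ lies in its kernel, i.e. $T(\theta^k;\theta,w)=0$ for all $\theta,w$.

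\item \emph{Trace vanishes (main obstacle).} The aim is to show $\operatorname{tr}Q(\theta,\cdot)=0$, which together with semidefiniteness gives $Q(\theta,\cdot)=0$. Polarize the identity $T(\theta^k;\theta,w)=0$ in $\theta$ in the direction $e_i$, set $w=e_i$, and sum over an orthonormal basis. Using the symmetry from step 1, this yields
\[
k\sum_iT(e_i,\theta^{k-1};\theta,e_i)+(\nabla^k R)_p(\theta^k)=0 .
\]
Next apply $\nabla^{k-1}_{\theta\cdots\theta}$ to the contracted second Bianchi identity $\operatorname{div}\Ric=\frac12 dR$, with commutation errors vanishing at $p$ as noted in step 1. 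This gives
\[
\sum_iT(e_i,\theta^{k-1};\theta,e_i)=\tfrac12(\nabla^kR)_p(\theta^k).
\]
Substituting, $(\tfrac k2+1)(\nabla^kR)_p(\theta^k)=0$. Since $(\nabla^kR)_p(\theta^k)=\operatorname{tr}Q(\theta,\cdot)$, the trace vanishes, and hence $Q\equiv0$.

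\item \emph{Conclusion by polarization.} Now $T(\theta^k;v,v)=0$ for all $\theta,v$. By step 1, $T$ is symmetric in the derivative slots and in the Ricci slots, so polarizing separately in $\theta$ and in $v$ gives $T=0$. This completes the induction.
\end{enumerate}

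I expect step 4 to be the main obstacle. It is the only place where $\Ric\ge0$ alone is not enough: positivity of $Q$ plus vanishing on the diagonal does not by itself force $Q=0$. One needs the Bianchi identity to kill the trace, and one must check carefully that every commutation of covariant derivatives at $p$ produces only lower-order Ricci jets, which vanish by the induction hypothesis.
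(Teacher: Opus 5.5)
Your proposal is correct and follows essentially the same route as the paper: induction on $k$, positive semidefiniteness of the leading Ricci jet from $\Ric\ge 0$, the kernel condition coming from the radial vanishing, and then the contracted Bianchi identity combined with an Euler-type identity to obtain $(1+\tfrac{k}{2})(\nabla^k R)_p(\theta^k)=0$, which kills the trace. The only difference is presentation: you work intrinsically with parallel transport and with commutators of covariant derivatives, whereas the paper writes $(\nabla^k\Ric)_p$ as the degree-$k$ Taylor polynomial $A_{ij}(x)$ in normal coordinates and differentiates $A(x)x=0$ directly. Your shortcut for odd $k$ is a harmless extra.
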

\begin{proof}
We argue by induction on $k$.
For $k=0$,  $\Ric_p(\theta,\theta)=0$
for every $\theta\in T_pM$ means
$\Ric_p=0$.
Now let $1\leq k\leq m$, and assume that
\[
    (\nabla^j\Ric)_p=0,
    \qquad
    0\leq j<k.
\]
Choose orthonormal normal coordinates $x=(x^1,\ldots,x^n)$ centered at $p$, so that $p$ corresponds to the origin, $\Gamma_{ij}^k(p)=0$ and write
\[
    R_{ij}(x)
    :=
    \Ric_x(\partial_i,\partial_j).
\]
The difference between covariant derivatives and ordinary partial
derivatives consists only of products involving lower-order
derivatives of the Ricci tensor and connection coefficients.
Therefore, by the induction hypothesis, all partial derivatives of
$R_{ij}$ of order less than $k$ vanish at the origin, and the
$k$-th covariant derivatives agree with the $k$-th ordinary partial
derivatives there.
Let $A_{ij}$ denote the homogeneous degree-$k$ Taylor term of $R_{ij}$:
\[
    A_{ij}(x)
    =
    \frac{1}{k!}
    \sum_{a_1,\ldots,a_k=1}^n
    (\partial_{a_1}\cdots\partial_{a_k}R_{ij})(0)
    x^{a_1}\cdots x^{a_k}.
\]
By smoothness,
\begin{equation}
    R_{ij}(x)
    =
    A_{ij}(x)+O(|x|^{k+1}),
    \qquad
    \partial_\ell R_{ij}(x)
    =
    \partial_\ell A_{ij}(x)+O(|x|^k).
    \label{eq:Ricci-Taylor}
\end{equation}
Let $A(x):=(A_{ij}(x))$, then by $\mathrm{Ric}\ge 0$ and the homogeneity of $A$, we know   $$A(x)=\lim_{t\to 0}t^{-k}A(\frac{x}{t})=\lim_{t\to 0}t^{-k}(R_{ij}(tx))+O(t)\ge 0.$$ For a unit vector $\theta\in T_pM$, let $\gamma_\theta(t)=\exp_p(t\theta)$. Then
\[
    \left.
    \frac{d^k}{dt^k}
    \right|_{t=0}
    \Ric_{\gamma_\theta(t)}
    \bigl(\dot\gamma_\theta,\dot\gamma_\theta\bigr)
    =
    (\nabla^k\Ric)_p
    (\underbrace{\theta,\ldots,\theta}_{k\text{ derivative slots}};
    \theta,\theta).
\]
On the other hand, by \eqref{eq:Ricci-Taylor}, this derivative equals
\[
    k!
    \sum_{i,j}
    A_{ij}(\theta)\theta^i\theta^j.
\]
Thus, by \eqref{eq:radial-Ricci-jet-vanishing} and definitio of $A_{ij}(x)$, we get 
\begin{equation}
    x^TA(x)x=0
    \qquad
    \text{for every }x\in\mathbb R^n.
    \label{eq:xAx-zero}
\end{equation}
Since $A(x)$ is positive semidefinite,
\eqref{eq:xAx-zero} implies $A(x)x=0$.
Equivalently,
\begin{equation}
    \sum_j A_{ij}(x)x^j=0
    \qquad
    \text{for every }i.
    \label{eq:Ax-zero}
\end{equation}
Set $T(x)
    :=
    \operatorname{tr}A(x)
    =
    \sum_i A_{ii}(x)$.
The contracted Bianchi identity gives
\begin{equation}
    \nabla^i R_{ij}
    =
    \frac12\,\partial_j\operatorname{Scal}.
    \label{eq:contracted-Bianchi}
\end{equation}
In normal coordinates,
\[
    g^{i\ell}
    =
    \delta^{i\ell}+O(|x|^2),
    \qquad
    \partial_jg^{i\ell}
    =
    O(|x|),
    \qquad
    \Gamma^a_{ij}
    =
    O(|x|).
\]
Together with \eqref{eq:Ricci-Taylor}, this gives
\[
    \nabla^iR_{ij}
    =
    \sum_i\partial_iA_{ij}
    +
    O(|x|^k), \qquad 
    \partial_j\operatorname{Scal}
    =
    \partial_jT
    +
    O(|x|^k).
\]
The second identity follows directly from $\operatorname{Scal} = g^{ab}R_{ab}$ and the product rule.
Substituting these expressions into
\eqref{eq:contracted-Bianchi}, we find that
\[
    \sum_i\partial_iA_{ij}
    -
    \frac12\,\partial_jT
\]
is a homogeneous polynomial of degree $k-1$ which is also
$O(|x|^k)$. Hence it vanishes identically. Therefore
\begin{equation}
    \sum_i\partial_iA_{ij}
    =
    \frac12\,\partial_jT.
    \label{eq:div-A}
\end{equation}
Differentiate \eqref{eq:Ax-zero} with respect to $x^i$ and sum over
$i$. Using \eqref{eq:div-A}, we obtain
\begin{align*}
    0
    =
    \sum_{i,j}
    \partial_i\bigl(A_{ij}(x)x^j\bigr)
    =
    \sum_j
    x^j
    \sum_i\partial_iA_{ij}
    +
    \sum_iA_{ii}
    =
    \frac12
    \sum_jx^j\partial_jT
    + T.
\end{align*}
Since $T$ is a homogeneous polynomial of degree $k$, by the Euler's identity
\[
    \sum_jx^j\partial_jT=kT,
\]
we know  $0 =\left(1+\frac{k}{2}\right)T$.
Thus $T\equiv0$.
Since $A(x)\ge 0$, it follows that $A(x)\equiv0$.
Hence all $k$-th order partial derivatives of $R_{ij}$ vanish at the
origin. This completes the induction.
\end{proof}

\begin{lemma}
\label{lem:jacobi-jet-isomorphism}
Let $(M,g)$ be a smooth Riemannian manifold and let $p\in M$.
For $j\geq0$, define the operator at $p$ by
\[
    \mathcal{R}_p^{(j)}(v)w
    :=
    \bigl(\nabla^j_{v,\ldots,v}\mathrm{Rm}\bigr)_p(w,v)v,
    \qquad
    v,w\in T_pM.
\]
Suppose that for some $N\geq0$,
\begin{equation}\label{eq:jet_lemma}
    \mathcal{R}_p^{(j)}(v)=0
    \qquad
    \text{for all }v\in T_pM,\quad 0\leq j\leq N.
\end{equation}  
Then $
    \nabla^j\operatorname{Rm}(p)=0
   $ for any $0\leq j\leq N.$
\end{lemma}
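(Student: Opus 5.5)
Proceed by induction on $j$, the same way the proof of Proposition~\ref{prop:ricci-radial-jet} proceeds. Fix $j\le N$ and assume $(\nabla^i\mathrm{Rm})_p=0$ for $i<j$. The hypothesis \eqref{eq:jet_lemma} says that the tensor $T:=(\nabla^j\mathrm{Rm})_p$, viewed as a $(j+4)$-linear form
\[
T(a_1,\dots,a_j;w,x,y,z)=\langle(\nabla^j_{a_1,\dots,a_j}\mathrm{Rm})(w,x)y,z\rangle ,
\]
satisfies $T(v,\dots,v;w,v,v,u)=0$ for all $v,w,u\in T_pM$. The goal is to show that this polynomial identity, combined with the algebraic symmetries of $T$, forces $T=0$. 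The symmetries are the curvature symmetries in the last four slots and the differential Bianchi identity. Symmetry in the first $j$ slots holds modulo commutator terms; these involve $\mathrm{Rm}\ast\nabla^{i}\mathrm{Rm}$ with $i\le j-2$, which vanish at $p$ by the induction hypothesis (for $j\ge1$ they contain a factor $\nabla^{i}\mathrm{Rm}(p)$, $i<j$). So at $p$ the tensor $T$ is fully symmetric in the derivative slots.

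For $j=0$ the argument is classical. $\langle \mathrm{Rm}(w,v)v,u\rangle=0$ for all $v,w,u$ says that every sectional-curvature-type quadratic form $v\mapsto R(w,v,v,u)$ vanishes. Polarizing in $v$ gives $R(w,v,v',u)+R(w,v',v,u)=0$. This identity, together with antisymmetry in $(w,v)$ and the first Bianchi identity, yields $3R(w,v,v',u)=0$ by the standard argument that the curvature tensor is determined by $R(x,y,y,x)$; here we even have the stronger vanishing with independent outer slots. Hence $\mathrm{Rm}_p=0$.

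For $j\ge1$, define the tensor $S(a_1,\dots,a_j;w,x,y,u):=T(a_1,\dots,a_j;w,x,y,u)$, where $T$ is symmetric in the $a$'s. The hypothesis says the full symmetrization of $S$ over the $j+2$ slots $(a_1,\dots,a_j,x,y)$ vanishes for every fixed pair $(w,u)$. The plan is to show that symmetrization in the derivative slots plus one curvature slot already kills $T$, via the second Bianchi identity
\[
\nabla_{a}R(w,x,y,u)+\nabla_{w}R(x,a,y,u)+\nabla_{x}R(a,w,y,u)=0 ,
\]
applied in the last derivative slot. Concretely, set $Q(w,u):=T(v,\dots,v;w,v,v,u)$ as polynomials in $v$. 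I would pass to the homogeneous degree-$j$ Taylor polynomial $A(x)$ of $R_{abcd}$ in normal coordinates, exactly as in Proposition~\ref{prop:ricci-radial-jet}. The hypothesis becomes $A(x)(\cdot,x,x,\cdot)=0$, i.e. $\langle A(x)(w,x)x,u\rangle=0$ for all $x,w,u$. Differentiating this identity in $x$ along directions $w$ and $u$ gives linear relations. The second Bianchi identity becomes, at top order, $\partial_aA_{wxyu}+\partial_wA_{xayu}+\partial_xA_{awyu}=0$, and Euler's identity $x^a\partial_aA=jA$ converts contractions with $x$ into multiples of $A$. The expected outcome is a relation of the form $(j+c)A(x)(w,y,z,u)$ expressed via terms that vanish. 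One route is to show first that $A(x)(w,y,x,u)=0$ for arbitrary $y$, using Bianchi plus Euler, then to polarize in $x$ and apply the $j=0$ argument to the resulting curvature-type tensor.

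The main obstacle is this last combinatorial step: organizing the Bianchi and Euler relations so that they actually close up and force $A\equiv0$. Unlike the Ricci case, no positivity is available, so everything must be purely algebraic. If the direct manipulation becomes messy, the fallback is a representation-theoretic argument. The space of tensors with the symmetries of $\nabla^j\mathrm{Rm}$ is an $O(n)$-module, and the map $T\mapsto(v\mapsto T(v,\dots,v;\cdot,v,v,\cdot))$ is equivariant. One then checks injectivity on each irreducible component. This is equivalent to the classical fact that the Jacobi operators along radial geodesics determine the metric jet in normal coordinates (the Taylor coefficients of $g_{ij}$ in normal coordinates are universal linear combinations of radial jets $\mathcal R^{(k)}_p$). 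With that fact, vanishing of all $\mathcal R^{(k)}_p$, $k\le N$, implies the metric is Euclidean to order $N+2$ at $p$, so $\nabla^j\mathrm{Rm}(p)=0$ for $j\le N$. I would adopt this last argument as the primary route, since the Jacobi equation $J''+R_\theta J=0$ expresses $J_\theta(t)$, and hence $g$ in normal coordinates, through the $\mathcal R^{(k)}_p$ alone.
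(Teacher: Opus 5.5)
Your adopted primary route is the paper's proof: the hypothesis gives $R_\theta(t)=O(t^{N+1})$ uniformly in $\theta$, and the Volterra form $J_\theta(t)=tI-\int_0^t(t-s)R_\theta(s)J_\theta(s)\,ds$ of the Jacobi equation then gives $J_\theta(t)=tI+O(t^{N+4})$, hence $g_{ij}(x)=\delta_{ij}+O(|x|^{N+3})$ in normal coordinates and $\nabla^j\mathrm{Rm}(p)=0$ for $j\le N$, so the algebraic induction (whose $j=0$ step is correct) is not needed. One small correction: the normal-coordinate Taylor coefficients of $g_{ij}$ are universal \emph{polynomials} in the radial jets $\mathcal{R}^{(k)}_p$, not linear combinations of them, which does not affect the vanishing argument.
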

\begin{proof} By \eqref{eq:jet_lemma} and \eqref{eq:Taylor-R-theta}, 
\begin{equation}\label{eq:jet_R_small}
    R_{\theta} = O(t^{N+1}) \quad \text{holds for } \theta \in \mathbb{S}^{n-1} \subset T_pM \text{ uniformly} .
\end{equation}
For every $w\in T_pM$, let $Y_w$ be the Jacobi field 
with $Y_w(0)=0$, $\frac{DY_w}{dt}(0)=w$.
Define $J_\theta(t)w :=P_t^{-1}Y_w(t)$. Then
\[
    J_\theta''(t)
    +
    R_\theta(t)J_\theta(t)
    =
    0, \qquad  J_\theta(0)=0,
    \qquad
    J_\theta'(0)=I.
\]
Integration by parts gives
\begin{equation*}
    J_\theta(t)
    =
    tI
    -
    \int_0^t
    (t-s) R_\theta(s)J_\theta(s)\,ds.
\end{equation*}
By  $J_\theta(s)=O(s)$.
and 
\eqref{eq:jet_R_small}, we obtain
\begin{equation}
    J_\theta(t)
    =
    tI+O(t^{N+4}),
    \qquad
    \frac{J_\theta(t)}{t}
    =
    I+O(t^{N+3}).
    \label{eq:jacobi-euclidean-approximation}
\end{equation}
Choose an orthonormal basis $(e_i)$ at $p$ and use
$x\to \exp_p(x)$ as normal coordinates. Along $x=t\theta$, the differential of the exponential map satisfies
\begin{equation*}
    (d\exp_p)_{t\theta}(w)
    =
    P_t
    \frac{J_\theta(t)w}{t}.
\end{equation*}
Therefore the coordinate coefficients of the metric are
\begin{align*}
    g_{ij}(t\theta)
    =
    \left\langle
        (d\exp_p)_{t\theta}e_i,
        (d\exp_p)_{t\theta}e_j
    \right\rangle
    =
    \left\langle
        \frac{J_\theta(t)}{t}e_i,
        \frac{J_\theta(t)}{t}e_j
    \right\rangle_p
    =
    \delta_{ij}
    +
    O(t^{N+3}).
\end{align*}
That is, 
$
    g_{ij}(x)
    =
    \delta_{ij}
    +
    O(|x|^{N+3}).
$
 Consequently,
\[
    \nabla^j\operatorname{Rm}(p)
    =
    0,
    \qquad
    0\leq j\leq N.
\]
\end{proof}

\bibliographystyle{amsplain}
\bibliography{references}
\end{document}